\newcommand{\tabj}{\newline\newline}
\newtheoremstyle{dotless}{}{}{\itshape}{}{\bfseries}{}{ }{}
\theoremstyle{dotless}
\newtheorem{theorem}{Theorem}[section]
\newtheorem{corollary}[theorem]{Corollary}
\newtheorem{assump}[theorem]{Assumption}
\newtheorem{lemma}[theorem]{Lemma}
\newtheoremstyle{definition}{}{}{}{}{\bfseries}{}{ }{}
\theoremstyle{definition}
\newtheorem{remark}[theorem]{Remark}
\newtheorem{algorithm}[theorem]{Algorithm}
\numberwithin{equation}{section}
\newcommand{\R}{\mathbb{R}}
\newcommand{\Z}{\mathbb{Z}}
\newcommand{\N}{\mathbb{N}}
\newcommand{\M}{\mathbb{M}}
\newcommand{\FM}{F_{\mathbb{M}}}
\newcommand{\qT}{q(T)}
\newcommand{\Cov}{\operatorname{Cov}}
\newcommand{\Corr}{\operatorname{Corr}}
\newcommand{\Var}{\operatorname{Var}}
\newcommand{\E}{\mathbb{E}}
\newcommand{\Pb}{\mathbb{P}}
\newcommand{\PbX}{\mathbb{P}_{|\mathcal{X}}}
\newcommand{\hatSdalpha}{\widehat{\mathcal{S}}_{d,\alpha}}
\newcommand{\hatmu}{\widehat{\mu}}
\newcommand{\floor}[1]{\lfloor #1 \rfloor}
\newcommand{\ceil}[1]{\lceil #1 \rceil}
\newcommand{\convd}{\overset{\mathcal{D}}{\Longrightarrow}}
\newcommand{\eqd}{\overset{\mathcal{D}}{=}}
\newcommand{\convp}{\overset{\mathbb{P}}{\Longrightarrow}}
\newcommand{\limd}{\lim_{d\to\infty}}
\newcommand{\limx}{\lim_{x\to\infty}}
\newcommand{\limmd}{\lim_{m,d\to\infty}}
\newcommand{\maxkTm}{\max\limits_{k=1}^{Tm}}
\newcommand{\hatsigma}{\hat{\sigma}}
\newcommand{\hatgamma}{\hat{\gamma}}
\newcommand{\maxhd}{\max\limits_{h=1}^d}
\newcommand{\minhd}{\min\limits_{h=1}^d}
\newcommand{\maxhid}{\max\limits_{h,i=1}^d}
\newcommand{\hatE}{\hat{E}}
\newcommand{\Tmd}{\mathcal{T}_{m,d}}
\newcommand{\hatTmd}{\widehat{\mathcal{T}}_{m,d}}
\newcommand{\hatTmdZ}{\widehat{\mathcal{T}}_{m,d}^{(Z)}}
\newcommand{\tildeTmdZ}{\widetilde{\mathcal{T}}_{m,d}^{(Z)}}
\newcommand{\Wd}{\mathcal{W}_d}
\newcommand{\tilderho}{\tilde{\rho}}
\newcommand{\TmdZ}{\mathcal{T}_{m,d}^{(Z)}}
\newcommand{\hatZ}{\widehat{Z}}
\newcommand{\tildeZ}{\widetilde{Z}}
\newcommand{\tildebVz}{\widetilde{\bm{V}}^{(z)}}
\newcommand{\tildeVz}{\widetilde{V}^{(z)}}
\newcommand{\bVz}{\bm{V}^{(z)}}
\newcommand{\Vz}{V^{(z)}}
\newcommand{\hatbVz}{\widehat{\bm{V}}^{(z)}}
\newcommand{\hatVz}{\widehat{V}^{(z)}}
\newcommand{\dV}{d_{\bm{V}}}
\DeclareMathOperator{\Erfc}{Erfc}
\DeclareMathOperator{\Erf}{Erf}
\newcommand\Tstrut{\rule{0pt}{3ex}}
\definecolor{darkblue}{rgb}{.1, 0.1,.8}
\definecolor{darkgreen}{rgb}{0,0.8,0.2}
\definecolor{darkred}{rgb}{.8, .1,.1}
\newcommand{\vep}{\varepsilon}
\def\spacingset#1{\renewcommand{\baselinestretch}%
{#1}\small\normalsize} \spacingset{1}
\renewcommand{\baselinestretch}{1.4}
\begin{document}
\title{Sequential change point detection in high dimensional time series}
{

\author{
{\small Josua G\"osmann, Christina Stoehr, Johannes Heiny and Holger Dette}\\
{\small Ruhr-Universit\"at Bochum}\\
{\small Fakult\"at f\"ur Mathematik}\\
{\small 44780 Bochum, Germany}\\
}

\date{}
\maketitle

\begin{small}
\begin{abstract}
Change point detection in high dimensional data has found considerable interest in recent years. 
{Most of the literature either designs methodology for a retrospective analysis, where the whole sample is already available when the statistical inference begins,
or considers online detection schemes controlling the average time until a false alarm.}
This paper takes a different point of view and develops monitoring schemes for the online scenario, where high dimensional data arrives successively and the goal is to detect changes as fast as possible controlling at the same time the probability of a type I error of a false alarm.
We develop a sequential procedure capable of detecting changes in the mean vector of a successively observed high dimensional time series with spatial and temporal dependence.
The statistical properties of the method are analyzed in the case where both, the sample size and dimension tend to infinity.
In this scenario, it is shown that the new monitoring scheme has asymptotic level alpha under the null hypothesis of no change and is consistent under the alternative of a change in at least one component of the high dimensional mean vector.
The approach is based on a new type of monitoring scheme for one-dimensional data which turns out to be often more powerful than the usually used CUSUM and Page-CUSUM methods, and the component-wise statistics are aggregated by the maximum statistic.
{For the analysis of the asymptotic properties of our monitoring scheme we prove that the range of a Brownian motion on a given interval is in the domain of attraction of the Gumbel distribution, which is a result of independent interest in extreme value theory.}
The finite sample properties of the new methodology are illustrated by means of a simulation study and in the analysis of a data example.
\end{abstract}
\end{small}

\medskip

\noindent
{\it Keywords and phrases:} 
high dimensional time series, change point analysis, sequential monitoring, Gaussian approximation, bootstrap
\smallskip

\noindent
{AMS Subject classification:} Primary 62M10, 62H15; Secondary 62G20; 60G70
\noindent

\section{Introduction}\label{sec:intro}
As digital transformation processes have accelerated during the last decades, new technologies like smartphones or car sensors are able to gather large amounts of data.
Due to this development companies, states, research institutes etc. face the problem to manage, monitor and examine huge data sets, which regularly exceed the means of traditional tools.
Thus the demand for so-called big data technology is steadily growing and thereby the requirement for theoretical foundation has put a lot of attention at the topic of high dimensional statistics in recent years.

Especially, the topic of change point analysis or detection of structural breaks has regained attraction and numerous authors have started to embed commonly used multivariate methods into a high dimensional framework or even develop new methodology from scratch.
Among many others, high dimensional change point problems have been considered by \cite{Cho2015}, \cite{Wang2018}, who develop methodology to identify multiple change points by a (wild) binary segmentation algorithm under sparsity assumptions.
\cite{Jirak2015} and \cite{Dette2018} aggregate component-wise CUSUM-statistics by the maximum functional to detect structural breaks in a sequence of means of a high dimensional time series.
\cite{LevyLeduc2009} analyze internet traffic data, by applying a compenent-wise CUSUM-test to dimension-reduced censored data.

\cite{Enikeeva2019} employ the Euclidean norm of the CUSUM-process to obtain a linear and a scan statistic of $\chi^2$-type, that is minimax-optimal under the regime of independent Gaussian observations.
Change point problems in high dimensional covariance matrices are studied by \cite{Wang2017}, \cite{Avanesov2018} and \cite{Dette2018a} using (wild) binary segmentation, multiscale methods and U-statistics, respectively.
U-statistics are also used by \cite{Wang2019} and \cite{Wang2020} to develop testing and estimation methodology for a structural break in the mean.

All listed references on high dimensional change point problems have in common that the proposed methods are designed for a \textit{retrospective or offline analysis}, where the whole sample is already available when the statistical inference is commenced.
In contrast to this, sequential change point detection deals with methods that are applicable for monitoring data in a so-called \textit{online scenario}.
In such a setup, data arrives steadily and methods are constructed to detect changes as fast as possible, while the problem is reevaluated with each new data point.  
Starting with the seminal paper of \cite{wald1945} on the sequential probability ratio test, an enormous amount of literature has been published discussing the problem from different perspectives.
Several concepts have been proposed to model the situation of online monitoring and we discuss the different paradigms addressing the sequential change point problem in more detail at the end of this introduction.

To the authors best knowledge, a common feature of most of the literature on sequential change point detection consists in the fact that it does not consider the time and spatially dependent high dimensional scenario, where the dimension of the data increases with the sample size.
The purpose of the present paper is to address this problem in the context of detecting changes in the mean.
For this purpose, we follow consider the paradigm of \cite{Chu1996}, which provides a model for online change point detection, such that the probability of a type I error (false alarm) can be controlled (asymptotically).
We develop a sequential algorithm in the high dimensional regime aggregating component-wise sequential detection schemes by the maximum statistic.
For the individual components we use a novel monitoring procedure, which screens for all possible positions of the change point and takes into account that the change does not necessarily occur in the first observations after the initial sample - see Section \ref{sec:testProblem} for more details. 
A nice feature of this approach consists in the fact that the limiting distribution of the statistic used to monitor each component (after appropriate standardization) is given by the {\it range of the Brownian motion}, that is 
$\M = \max_{0 \leq t \leq q} W(t) - \min_{0 \leq t \leq q} W(t)$, where $W$ is a Brownian motion on the interval $[0,q]$ and $0<q <1$ is a known constant.
The distribution of the random variable $\M$  appears as the weak  limit of the range of cumulative sums of i.i.d.~random variables with variance $1$ [see \cite{Feller1951}], and we will show that it belongs to the domain of attraction of the Gumbel distribution.
This result is of independent interest in extreme value theory and allows us to aggregate component-wise statistics by the maximum.

As a consequence we can develop a sequential monitoring scheme in the high dimensional regime using the quantiles of the Gumbel distribution.
For this purpose, we combine Gaussian approximations tools for high dimensional statistics [see \cite{Chernozhukov2013,Zhang2018b}] and show that the statistic can be approximated by a counterpart computed from Gaussian observations with the same long-run correlation structure as the observed time series.
Using Gaussian comparison and anti-concentration inequalities we show that this statistic is sufficiently close to the maximum of ranges of dependent Brownian motions.
Finally, we use Poisson approximation via the Chen-Stein method to eliminate the independence condition, such that the new result for the independent case can be applied.
As the rates of most convergence results in extreme value theory are known to be rather slow, we also propose a simple bootstrap procedure, which improves the performance of the sequential monitoring scheme for sample sample sizes.
To our best knowledge, our paper provides the first rigorous analysis of a sequential change point detector in the high dimensional regime for the model introduced by \cite{Chu1996}.

The remaining part of this paper is organized as follows.
We conclude this introduction with a discussion and comparison of the different philosophies in sequential change point detection.
In Section \ref{sec:testProblem}, we introduce the specific testing problem under consideration and present the new monitoring procedure for structural breaks in the sequence of means from a high dimensional time series.
Section \ref{sec3} is devoted to our main results and to the analysis of the asymptotic properties of the new procedure. 
In particular, we prove that the maximum of the individual test statistics converges weakly (with increasing dimension and initial sample size) to a Gumbel distribution. These results are used to show that the monitoring scheme has asymptotic level $\alpha$ and is consistent.
In Section \ref{sec:finitesamples}, we investigate the finite sample properties of the new procedures by means of a simulation study and illustrate potential applications in a data example.
Finally, all proofs are deferred to Appendix \ref{sec:technicalDetails} in the online supplement.
\bigskip

\noindent
{\bf Related literature - two paradigms in sequential change point detection} 
\label{sec11}
\medskip

\noindent
{
In the remaining part of this section, we briefly discuss to ways of modelling the problem of sequential change point detection, which have developed rather independently in the last $25$ years. The list of references cited below is by no means complete, because both modelling approaches have been used intensively in the literature.
Rather it represents a selective choice of the authors with the goal to put the approach proposed in this paper in the appropriate context. 
Sequential change point detection in the high dimensional scenario, where the dimension increases with the sample size, has barely been studied in the liteature, where some of the relevant exceptions are cited below.
}

The different models in the literature are used to address different criteria for quantifying the propensity of a detection scheme to cause false alarms.
The traditional \textit{Statistical Process Control} (SPC)-approach has its focus on a fast change point detection, where the average time from monitoring start to a false alarms is controlled.

On the other hand, if the costs of a false alarm are heavy, for instance if a large portfolio has to be restructured in portfolio management, it might be more reasonable to control the probability of a false alarm.
While from an application point of view the criteria are strongly related, different mathematical models have to be used to analyze the statistical properties.
The SPC-approach uses models from classical sequential analysis (often independent observations) and some of the relevant literature is given in part (B) of this section.
As an alternative \cite{Chu1996} introduced a model to control asymptotically the probability of a false alarm, which requires an initial stable sample. 
The related literature for this model is described in part (A) of this section. 
The purpose of the present paper is to develop a monitoring scheme in this model, where the dimension is allowed to grow with the sample size (at a polynomial order).

\noindent The SPC- and the approach proposed in \cite{Chu1996} have been developed rather independently during the last two decades and usually a manuscript either focuses on the one or on the other.
One reason for this development is their different focus in the sequential change point problem and probably other ones are mathematical reasons.
The very recent work of \cite{Yu2020} provides a brief theoretical comparison of both methods in the univariate setting with independent sub-Gaussian observations and also indicates the mathematical difficulties of such a comparison. 
The authors argue that (for their specific model) controlling the type I error over a fixed monitoring period only results in minor extra costs for the expected detection delay and 
{\it "...suggest that upper bounding the overall type I error might be better."}
However, their model does not include a stable initial set, such that no asymptotic investigations can be carried out.

\noindent In general, methods developed in the SPC-paradigm often base upon the premise, that certain statistical parameters, for instance the mean before a change, are exactly known from a long-lasting stable period (Phase I).
The procedure for the actual monitoring (Phase II) is then calibrated using these \textit{exact parameters}; see \cite{Chen2020} for a recent example in the context of high dimensional means.
While such a proceeding might be reasonable in some applications like manufacturing control, in other fields like econometrics, this is unrealistic as a large stable set might be rarely available.
In this context, the approach of \cite{Chu1996} might be advantageous, as the size of the stable set is treated as a parameter (sample size) and quantities derived from this set are handled as estimators.
The problem of separate Phase I and Phase II has also been discussed in the SPC-literature; see \cite{Hawkins2003} or \cite{Ross2014} among others, who discuss SPC-type procedures updating the  initial estimates.

\medskip

\noindent
{\bf (A) Controlling the type I error}
\cite{Chu1996} propose a sequential paradigm, such that the type I error can be controlled (asymptotically).
It is based on the premise of an initial stable data set, that has to be available before monitoring commences.
Their main idea is to invoke invariance principles, such that the probability of a false alarm can be controlled over a possibly infinite monitoring time - asymptotically as the size of the initial set grows.
This modelling approach also allows for the estimation of model parameters, such as the (long-run) variance or coefficients in regression models if covariates are present, and therefore may have advantages for applications, where temporal dependence is present.
The literature distinguishes between the \textit{open-end} and \textit{closed-end scenario}.
A \textit{closed-end scenario} is associated with a fixed endpoint, where monitoring has to be eventually stopped even if no change was detected before.
An \textit{open-end scenario} does not postulate an endpoint meaning that monitoring can (theoretically) continue forever if no change is detected.

Since its introduction, this paradigm has found considerable attention in the literature on change point detection.
For example, \cite{Horvath2004}, \cite{Huskova2005} and \cite{Aue2006} consider changes in the parameters of linear models with statistics based on residuals.
For independent identically distributed (i.i.d.) data, \cite{Kirch2008} and \cite{Huskova2012} propose several bootstrap procedures for sequential change point detection in the mean and in the parameters of linear regression models.
A MOSUM-approach, which employs a moving monitoring window in linear models is introduced by \cite{Chen2010}, while \cite{Ciuperca2013} proposes a generalization of the \textit{sequential} CUSUM statistic to non-linear models.
\cite{Fremdt2014} uses the so-called Page-CUSUM, which scans for changes through the already available monitoring data and is more efficient to detect later changes than the classical sequential CUSUM scheme.
\cite{Hoga2017} proposes an $\ell_{1}$-norm to detect structural breaks in the mean and variance of a multivariate time series and \cite{Dette2019} develop an amplified scanning method combined with self-normalization.
\cite{Otto2019} define a Backward CUSUM statistic based on recursive residuals in a linear model.
Unifying frameworks are provided in \cite{Kirch2015} and \cite{Kirch2018} and a theory based on U-statistic is established in \cite{Kirch2019}.
We also refer to the recent review of sequential procedures in Section 1 of \cite{Anatolyev2018}.
\medskip

\noindent
{\bf (B) Statistical Process Control}
The traditional approach, partially known as \textit{Statistical Process Control} (SPC), can be traced back to the seminal papers of \cite{Page1954,Page1955}.
So-called \textit{Control Charts} have been the first methods developed in this area and their typical field of application was quality control for manufacturing processes.
In this spirit, these methods are made to guarantee a quick change point detection, for which regular false alarms are necessarily accepted.
Thus, procedures developed in the SPC-context are commonly analyzed by two quantities.
The average run length (ARL) measures the expected time until a false alarm is raised under the null hypothesis of no change.
Its counterpart is the expected detection delay (EDD) necessary to raise an alarm under the alternative.
It can be regarded as an analogue to classical hypothesis testing theory, that only one of these key quantities can be optimized while the other is only kept bounded.
Since \cite{Page1954} numerous authors have followed this approach, for instance, \cite{Hinkley1971}, \cite{Moustakides1986} or \cite{Nikiforov1987}, and we delegate the reader to review papers by \cite{Woodall1999} or \cite{Lai2001} and the more recent monograph of \cite{Tartakovsky2014}, where the state of the art is discussed.
In a discussion paper \cite{Mei2008} demonstrates that the average run length (ARL) might not be an appropriate criterion as a measure of false detection if the observations are dependent.
The author and the discussants propose several alternatives.

Although the (basic) SPC-approach is relatively old, many authors have been constantly working on this problem from various perspectives.
In particular, there also exist several papers developing online monitoring procedures for new situations arising in the information age. 
For instance \cite{Tartakovsky2006}, \cite{Mei2010}, \cite{Zou2015} investigate multi-sensor change point problems and \cite{Chu2018} propose an algorithm based on nearest neighbour information for non-euclidean data. 
Most of this work investigates multivariate data, where the dimension is assumed to be fixed, and imposes rather strong assumptions to investigate the ARL and EDD (such that of independent identically distributed observations).\\
On the other hand, there also exist a few references considering the high dimensional case, where the dimension is large compared to the sample size.
\cite{Xie2013} develop a mixture procedure to monitor parallel streams of data for a change point without assuming a spatial structure.
\cite{Soh2017} combine a filtered derivative approach with convex optimization to develop a scalable and statistically efficient monitoring scheme for high dimensional sparse signals, while \cite{Chen2020} monitor for mean changes across coordinates by a sophisticated aggregation technique.
These three contributions assume independent normal distributed observations without spatial dependence.

While these assumptions are helpful to obtain some basic understanding of the properties of a procedure, they might be too restrictive in applications, in particular in economics.
Our approach differs substantially from these high dimensional frameworks.
We use the modelling approach by \cite{Chu1996} to (asymptotically) control the probability of a type I error in the high dimensional regime.
Moreover, we neither assume Gaussian nor independent observations to analyze the theoretical properties of the proposed monitoring procedure. 


\section{Sequential monitoring of high dimensional time series}\label{sec:testProblem}
Let $\{\bm{X}_t\}_{t\in\mathbb{Z}}$ denote a time series of random vectors in $\R^d$ with mean vectors
$$
\bm{\mu}_t
:= (\mu_{t,1},\dots,\mu_{t,d})^\top = \E[\bm{X}_t]
:= \E[(X_{t,1},\dots,X_{t,d})^\top]
 ~.$$
We take the sequential point of view and are interested in monitoring for changes in the vectors $\bm{\mu}_1, \bm{\mu}_2, \ldots $~.
Following \cite{Chu1996} we assume that a historic or initial data set, say $\bm{X}_1,\ldots,\bm{X}_m$, is available, which is known to be mean stable.
Starting with observation $\bm{X}_{m+1}$ we will sequentially test for a change in the mean vector in the monitoring period.
The corresponding testing problem is therefore given by the hypotheses
\begin{align}\label{def:hyposGeneral}
\begin{split}
&H_0: \bm{\mu}_1=\dots=\bm{\mu}_m=\bm{\mu}_{m+1} = \bm{\mu}_{m+2}=\cdots\\
\text{versus}~~ &H_1: \exists k^* \in \N \text{, s.t. }
\bm{\mu}_1=\dots=\bm{\mu}_m=\dots=\bm{\mu}_{m+k^*-1} \neq \bm{\mu}_{m+k^*} = \cdots~.
\end{split}
\end{align}
In the present paper, we consider a closed-end scenario where the procedure stops after $m+Tm$ observations even if no change has been detected [see \cite{Aue2012}, \cite{Wied2013} among many others].}
The factor $T$ determines the length of the monitoring period compared to the size of the initial training set $m$ and so the hypotheses in \eqref{def:hyposGeneral} read as follows
\begin{align}\label{def:hypos}
\begin{split}
&H_0: \bm{\mu}_1=\dots=\bm{\mu}_m=\bm{\mu}_{m+1} = \bm{\mu}_{m+2}=\cdots=\bm{\mu}_{m+Tm}\\
\text{versus}~~   &H_1: \exists k^* \in \{1,\dots,Tm\}\text{, s.t. }\bm{\mu}_1=\dots=\bm{\mu}_{m+k^*-1} \neq \bm{\mu}_{m+k^*}= \cdots=\bm{\mu}_{m+Tm}~.
\end{split}
\end{align}
In the following, we will develop a sequential detection scheme which is capable to distinguish between the hypotheses given in \eqref{def:hypos} in a high dimensional setting where the dimension $d$ of the mean vector is increasing with the initial sample size $m$.
To be precise, we denote by 
\begin{align}
\hatmu_i^j(h) = \dfrac{1}{j-i+1}\sum_{t=i}^j X_{t,h}
\end{align}
the estimator of the mean in component $h \in \{1, \ldots , d\} $ from the sample $X_{i,h},\ldots,X_{j,h}$.
Following \cite{Goesmann2019}, we consider the statistic
\begin{align} 
\label{hol31}
\hatE_{m,h}(k) = \max_{j=0}^{k-1} \dfrac{k-j}{\sqrt{m}\hatsigma_h} \Big|\hatmu_{m+j+1}^{m+k}(h) - \hatmu_1^{m+j}(h) \Big|
\end{align}
at time point $m+k$ in a single component $h$, where $\hatsigma_h^2$ denotes an appropriate estimator of the unknown long-run variance
$$
\sigma_h^2 = \sum_{t \in \Z} \Cov(X_{0,h},X_{t,h})
$$
in the $h$th component (explicit conditions for the existence of the long-run variance are given in Section \ref{sec3}).
Note that $\hatE_{m,h}(k)$ is a weighted CUSUM statistic to detect a change point in the sequence of means corresponding to the data $X_{m+1,h},\ldots,X_{m+k,h}$.
A structural break in the sequence of means $\mu_{m+1,h},\mu_{m+2,h}, \ldots $ is detected as soon as the sequence 
$$ w(1/m) \hatE_{m,h}(1), w(2/m) \hatE_{m,h}(2),\ldots 
$$
exceeds a given threshold, that is $w(k/m) \hatE_{m,h}(k) > c_{\alpha}^{(h)}$, where $w$ is a suitable weight function and the critical value $c_{\alpha}^{(h)}$ is chosen based on the desired test level $\alpha$.
Following \cite{Aue2004}, \cite{Wied2013} and \cite{Fremdt2014}, we will work with the commonly used weight function
$w(t) = 1/(1 + t),$
throughout this paper.

\begin{remark} ~~
\begin{itemize} 
\item[(1)]
Note that most of the literature investigates sequential detectors based on the differences
\begin{align}
\label{schemeCUSUM}
\Big|\hatmu_{m+1}^{m+k}(h) - \hatmu_1^{m}(h) \Big|
\end{align} 
and the corresponding detection schemes are usually called (ordinary) CUSUM tests [see 
\cite{Chu1996}, \cite{Horvath2004}, \cite{Aue2006}].
Another part of the literature focuses on detectors based on the differences
\begin{align}
\label{schemePageCUSUM}
\Big|\hatmu_{m+j+1}^{m+k}(h) - \hatmu_1^{m}(h) \Big|\;\;\text{for}\;\;j=0,\dots,k-1
\end{align}
and the corresponding detection schemes are usually called Page-CUSUM tests [see \cite{Fremdt2014,Fremdt2015}, \cite{Kirch2018}].
The use of the differences $\big|\hatmu_{m+j+1}^{m+k}(h) - \hatmu_1^{m+j}(h) \big|$ 
is motivated by the likelihood principle [see \cite{Dette2019}].
Compared to the differences in \eqref{schemeCUSUM} it avoids the problem that the estimator $\hatmu_{m+1}^{m+k}(h)$ may be corrupted by observations before the change point, which could lead to a loss of power. 
Compared to the differences in \eqref{schemePageCUSUM} the use of $\hatmu_1^{m+j}(h)$ instead of $\hatmu_1^{m}(h)$ may avoid a loss in power in cases of a small initial sample and a rather late change point.
The advantages of detection schemes based on the differences $\big|\hatmu_{m+j+1}^{m+k}(h) - \hatmu_1^{m+j}(h) \big|$ against ordinary sequential CUSUM and the Page-CUSUM procedures have been recently demonstrated by \cite{Goesmann2019}.
\item[(2)]
Several authors consider the more general class of weight functions
$
w_\gamma(t) = (t+1)^{-1}\big(\frac{t}{t+1}\big)^{-\gamma}
$
for $\gamma \in [0,1/2)$ [see for instance \cite{Horvath2004}, \cite{Aue2006} or \cite{Kirch2018}].
The weight function 
$w(t) = 1/(1 + t)$
is obtained for $\gamma=0$ and was proven to be preferable to $\gamma>0$ in many situations except for changes that occur almost immediately [see \cite{Kirch2018}].
It is most likely, that the theoretical results of this paper remain correct in the case $\gamma>0$.
\end{itemize} 
\end{remark}

\noindent
In order to control the probability of erroneously deciding for a structural break in the component $h$ during the monitoring period, one has to determine the probability
$$\Pb \Big( \max_{k=1}^{Tm} w(k/m)\hatE_{m,h}(k) > c_{\alpha}^{(h)} \Big)~.$$
For fixed $h \in \{1, \ldots , d\}$ we can employ a result of \cite{Goesmann2019} who showed that (under appropriate assumptions), as $m\to \infty$, 
\begin{align}\label{conv:1dim}
\max_{k=1}^{Tm} w(k/m)\hatE_{m,h}(k)
\convd 
\M = \max_{0 \leq t \leq \qT} W(t) - \min_{0 \leq t \leq \qT} W(t)\,, 
\end{align}
where the symbol $\convd$ denotes weak convergence, $\qT=T/(T+1)$ and $W$ is a standard one-dimensional Brownian motion.
Note that $T$ is the parameter controlling the length of the monitoring period [see the hypotheses in \eqref{def:hypos}].
$\M$ is known in the probability literature as the range of the Brownian motion on the interval $[0,\qT]$ and its distribution appears as the weak limit of the range of cumulative sums of i.i.d.~random variables with variance $1$ [see \cite{Feller1951}].

For a detection of a change point in the complete mean vector we propose to aggregate the statistics for the different spatial dimensions $h=1,\ldots,d$.
More precisely, we consider the maximum of the different components, that is $\max_{h=1}^{d} w(k/m)\hatE_{m,h}(k)$,
and reject the (closed-end) null hypothesis $H_0: \bm{\mu}_1=\dots=\bm{\mu}_{m+Tm}$
of no structural break in the high dimensional means $\bm{\mu}_1,\ldots,\bm{\mu}_{m+Tm}$ if this quantity exceeds a given threshold, that is
\begin{equation}\label{hol1}
\hatTmd:=\max_{k=1}^{Tm} \maxhd w(k/m) \hatE_m(k) > c_{d,\alpha}~.
\end{equation} 
Here the critical value $c_{d,\alpha}$ is chosen appropriately such that (asymptotically) the probability of erroneously deciding for a change point is controlled.
In the following section, we investigate the weak convergence of the statistic 
$\hatTmd$.
These results will be used to define critical values $c_{d,\alpha}$ in \eqref{hol1} (one by asymptotic theory and one by bootstrap), such that the monitoring procedure is consistent and at the same time controls the probability of the type I error, that is 
\begin{align}\label{size}
\limsup_{m,d\to \infty}\, \Pb_{H_0}\Big( \hatTmd > c_{d,\alpha} \Big) & \leq \alpha \quad \text{ and } \quad
\lim_{m,d\to \infty} \Pb_{H_1}\Big( \hatTmd > c_{d,\alpha} \Big)
= 1\,.
\end{align}

\medskip 

\begin{remark} \label{rob}
{\rm In this paper, we consider mean based detectors corresponding to ordinary least squares estimation.
We expect that similar results can be obtained using robust estimates such as $M$-estimates considered by \cite{Chochola2013}.  For example, the median has been 
discussed as a special case in \cite{Dette2019} and \cite{Goesmann2019}. 
}
\end{remark} 

\section{Main results}\label{sec3}
In this section, we derive the asymptotic properties of the proposed detector defined in \eqref{hol1} in the high dimensional setting where sample size and dimension tend to infinity and we allow for temporal as well as spatial dependencies in the data. 
In particular, we establish as a consequence of Theorem \ref{thm:mainGumbel} below - in case of constant mean vectors - the weak convergence
\begin{align}\label{def:hatTmdConv}
a_d\big(\hatTmd -b_d \big) \convd G\quad\mbox{ as }m,d\rightarrow\infty\,,
\end{align}
where $a_d, b_d$ are suitable sequences and $G$ is a standard Gumbel random variable with c.d.f. $F_G(x) = \exp(-\exp(-x))$, $x\in \R$.
As inevitable in high dimensional time series analysis, we require assumptions on the relation between the (initial) sample size and the dimension as well as assumptions on the dependence structure to control the dependence between components at different time points uniformly.

Throughout this paper, we assume that the observations are drawn from the array $\{ X_{t,h} \}_{t \in \Z, h \in \N}$, for which we suppose the location model 
\begin{align}\label{eq:XmuPlusE}
X_{t,h}
= \mu_{t,h} + e_{t,h}~, ~~t \in \Z,~ h \in \N~,
\end{align}
where $\mu_{t,h}=\E[X_{t,h}]$ is the expectation of the $h$ component and the centered array $\{e_{t,h}\}_{t \in \Z, h \in \N}$ is given as a physical system [see e.g. \cite{Wu2005}], that is
\begin{align}\label{eq:physicalSystem}
e_{t,h} = g_h(\varepsilon_t,\varepsilon_{t-1},\dots)~,~~t \in \Z,~ h \in \N~.
\end{align}
The underlying sequence of innovations $\{\varepsilon_t\}_{t \in \Z}$ consists of i.i.d.~random variables with values in some arbitrary measure space $\mathbb{S}$ and the functions $g_h: \mathbb{S}^{\N}: \to \R$ are assumed to be measurable.
Note that, by the definition above, the random variables $\{e_{t,h}\}_{t\in \Z, h \in \N}$ are (strictly) stationary with respect to the time index $t$, such that for any fixed dimension $d$ the multivariate time series $\big\{\bm{e}_t = \big(e_{t,1}, e_{t,2}, e_{t,3},\dots, e_{t,d}\big)^\top\big\}_{t \in \Z}$
is stationary.
The data generating model defined by formula \eqref{eq:physicalSystem} has received a lot of attention in recent years [see for example \cite{Wu2011}, \cite{Liu2013}, \cite{ElMachkouri2013}, \cite{Berkes2014} among many others].
It covers the major part of prevalent time series models like autoregressive or moving average processes.
Furthermore, it also allows for a natural measurement of temporal dependence which is constructed as follows.
Let $\varepsilon_0'$ be an independent copy of $\varepsilon_0$ and define
\begin{align*}
X_{t,h}'
= \mu_{t,h} + g_h(\varepsilon_t,\varepsilon_{t-1},\dots\varepsilon_{1},\varepsilon_{0}',\varepsilon_{-1},\dots)
\end{align*}
as a counterpart of $X_{t,h}$ where $\varepsilon_0$ is replaced by $\varepsilon'_0$.
If $p \geq 1$ we denote by $\Vert X \Vert_p=\E[|X|^p]^{1/p}$ the ordinary $L_p$-norm of a real-valued random variable $X$ (assuming its existence).
If $\| e_{t,h}\|_p < \infty$, the coefficients
\begin{align*}
\vartheta_{t,h,p}
:= \big\Vert X_{t,h} - X_{t,h}' \big\Vert_p
\end{align*}
measure the influence of innovation $\varepsilon_0$ on $X_{t,h}$ and thereby quantify the (temporal) dependence within the system $\{e_{t,h}\}_{t\in \Z, h \in \N}$ defined by \eqref{eq:physicalSystem}.
If $\| e_{t,h}\|_p < \infty$ for some $p \geq 2$ we define the covariances of cross-components by $\phi_{t,h_1,h_2} := \Cov(X_{0,h_1},X_{t,h_2}), \phi_{t,h}:= \phi_{t,h,h}$
and the long-run covariances and variances by
\begin{align}\label{eq:longruncovariance}
\gamma_{h_1,h_2}
:= \sum_{t \in \Z} \phi_{t,h_1,h_2}
\;\;\;
\text{and}
\;\;\;
\sigma_{h}^2
:= \gamma_{h,h}\,,
\end{align}
respectively.
If $\sigma_{h_1},\sigma_{h_2}>0$, let additionally
\begin{align}\label{eq:longruncorr}
\rho_{h_1,h_2}
:= \dfrac{\gamma_{h_1,h_2}}{\sigma_{h_1}\sigma_{h_2}}
\end{align}
denote the long-run correlations.
It will be crucial for the asymptotic considerations to control 
the coefficients $\vartheta_{t,h,p}$ and the correlations $\rho_{h_1,h_2}$ for increasing time $t$ and spatial distance $|h_1-h_2|$, respectively.
This will be formulated in Assumptions \ref{assump:temporalDependence} and \ref{assump:spatialDependence} below.
Before we state these precisely, we begin with two assumptions on the relation between sample size and dimension and on the tail behavior of the errors in model \eqref{eq:XmuPlusE}.

\begin{assump}[Assumption on the dimension]\label{assump:model}
{\rm
There exist constants $D>0$ and $C_D>0$~, such that
\begin{enumerate}[label={\normalfont (D1)},ref=D1, leftmargin=1.2cm]
\item $ m^{1/C_D} \le d \le C_D \, m^D$.
\label{assump:D1}
\end{enumerate}
}
\end{assump}

\begin{assump}[Structural assumptions]\label{assump:RV}
{\rm 
The random variables $e_{t,h}$ in model \eqref{eq:XmuPlusE} have bounded exponential moments, that is: there exists a positive sequence $B_m$, such that
\begin{enumerate}[label={\normalfont (S1)},ref=S1, leftmargin=1.1cm]
\item $\maxhd \E \big[\exp\big( | e_{1,h} |/B_m\big) \big] \leq C_e\,,$
\label{assump:S1}
\end{enumerate}
where $B_m \leq m^B$ with constants $B<3/8$ and $C_e>1$. 
}
\end{assump}
\begin{assump}[Temporal dependence]\label{assump:temporalDependence}
{\rm
There exist constants $p>2D+4$, $\beta \in [0,1)$, $C_\vartheta>0$ such that for all $t \in \N_0$
\begin{enumerate}[label={\normalfont (TD1)},ref=TD1, leftmargin=1.5cm]
\item $\sup\limits_{h\in \N} \vartheta_{t,h,p} \leq C_\vartheta \beta^t$~.
\label{assump:TD1}
\end{enumerate}
Further, assume that for a positive constant $c_\sigma$ the long-run variances defined in \eqref{eq:longruncovariance} are uniformly bounded from below, that is
\begin{enumerate}[label={\normalfont (TD2)},ref=TD2, leftmargin=1.5cm]
\item $c_\sigma \leq \inf\limits_{h\in \N} \sigma_h~.$
\label{assump:TD2}
\end{enumerate}}
\end{assump}

\begin{assump}[Spatial dependence]\label{assump:spatialDependence}
{\rm
There exist a sequence $r_m$ converging to zero, and a constant $\rho_+ \in [0,1)$, such that the long-run correlations defined in \eqref{eq:longruncorr} fulfill
\begin{enumerate}[label={\normalfont (SD1)},ref=SD1, leftmargin=1.5cm]
\item $|\rho_{i,j}| \leq \big(\log|i-j|\big)^{-2} r_{|i-j|}$ whenever $|i-j|\geq 2$\,, 
\label{assump:SD1}
\end{enumerate}
\begin{enumerate}[label={\normalfont (SD2)},ref=SD2, leftmargin=1.5cm]
\item $\sup\limits_{i,j:\,|i-j|\geq 1}|\rho_{i,j}|\leq\rho_+<1\,.$
\label{assump:SD2}
\end{enumerate}
}
\end{assump}

\noindent
Let us briefly discuss the assumptions above.
Assumptions of the type \eqref{assump:D1} are quite common in high dimensional change point problems.
For example, \cite{Jirak2015}, \cite{Wang2018} and \cite{Dette2018} also assume a polynomial growth of the dimension with the sample size.
Conditions like Assumption \ref{assump:RV} and \ref{assump:temporalDependence} are both indispensable ingredients for Gaussian approximation results in high dimensional statistics and will be used in the proofs of our main results.
Here Assumption \ref{assump:RV} controls the tail behavior of the observations [see also \cite{Chernozhukov2013,Chernozhukov2019}], while Assumption \ref{assump:temporalDependence} states a sufficiently weak temporal dependence and a lower bound for the (long-run) variances [see \cite{Zhang2018b}].
Assumption \ref{assump:spatialDependence} controls the long-run correlations between different components.
The fact that the correlations $\rho_{i,j}$ are sufficiently small for a large distance $|i-j|$ is crucial to obtain the desired extreme value convergence.
Both parts, \eqref{assump:SD1} and \eqref{assump:SD2}, are in line with those proposed for the retrospective change point method of \cite{Jirak2015}.
We point out, that the results of this paper remain correct in the case, where condition \eqref{assump:SD1} in Assumption \ref{assump:spatialDependence}
is only satisfied after an appropriate permutation of the spatial components.
This feature reflects the fact, that the proposed maximum aggregation is invariant with respect to the order of the components.

\subsection{Monitoring using asymptotic quantiles}\label{sec31} 
In this section, we develop the asymptotic theory to define a quantile in the monitoring scheme \eqref{hol1}.
Our first result provides the basis for the proof of the main Theorem \ref{thm:mainGumbel} of this section.
It is stated here because of its independent interest. It shows that the distribution of the maximum of dependent copies of the random variable $\M$ in \eqref{conv:1dim} is in the domain of attraction of the Gumbel distribution if the dependence structure is sufficiently weak.

\begin{theorem}
\label{thm:gumbelWiener.dep}
Let $(W_1,\ldots,W_d)^\top $ be a $d$-dimensional Brownian motion with correlation matrix 
$\Sigma_d =(\rho_{i,j}^{(d)})_{1\leq i,j\leq d}$ whose entries satisfy
$\sup_{1\le i<j\le d}|\rho_{i,j}^{(d)}|\leq\rho_+\in[0,1)$.
Assume that there exists a sequence $L_d$, such that $\rho_{i,j}^{(d)}= 0 $ if $ |i-j| > L_d $ and
\begin{align}\label{assump:Ld}
L_d = o(d^{\Delta})\;\; \text{for some}\;\; \Delta<\left(\frac{1- \rho_+ \sqrt{2-\rho_+^2}}{ 1-\rho_+^2} \right)^2 \text{ as } d \to \infty\,.
\end{align}
Further, denote for $q \in (0,1]$ and $h=1,\dots,d$ by $M_h = \max_{0 \leq t \leq q} W_h(t) - \min_{0 \leq t \leq q} W_h(t)$ the range of the Brownian motion $W_h$ in the interval $[0,q]$.
Then we obtain for $d\to \infty$
\begin{align}\label{eq:maxconv}
a_d \Big ( \maxhd M_h - b_d \Big ) \convd G\,,
\end{align}
where $G$ denotes a standard Gumbel distribution with cdf $F_G(x) = \exp(-\exp(-x))$.
The scaling sequences $a_d,b_d$ are given by
\begin{equation}\label{eq:adbd}
a_d=\sqrt{\frac{2 \log d}{q}} \quad \text{ and } \quad b_d=\sqrt{2q \log d}-\frac{\sqrt{q} \,(\log \log d - \log \tfrac{16}{\pi})}{2\sqrt{2 \log d}}\,.
\end{equation}
\end{theorem}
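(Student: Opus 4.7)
The plan is to prove \eqref{eq:maxconv} in three steps: (a) obtain the tail asymptotic of a single range $M_h$ as $x \to \infty$; (b) verify that the sequences $(a_d,b_d)$ in \eqref{eq:adbd} are exactly calibrated so that $d\cdot P(M>b_d+x/a_d)\to e^{-x}$; and (c) pass from the $L_d$-dependent Brownian motions to independent copies via a Chen--Stein Poisson approximation, exploiting that $W_h$ and $W_{h'}$ are independent whenever $|h-h'|>L_d$.

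For step (a) I would apply the reflection principle / method of images to the joint law of $(\max W,\min W)$ on $[0,q]$ (the Brownian analogue of \cite{Feller1951}), which expresses the CDF of $M$ as an alternating series of error functions. Isolating the leading term and controlling the remainder gives, as $x\to\infty$,
\[
P(M>x)=\frac{4\sqrt{2q}}{\sqrt{\pi}\,x}\,\exp\!\bigl(-x^{2}/(2q)\bigr)\,(1+o(1)).
\]
Step (b) is then a direct computation: expanding $u_d:=b_d+x/a_d$ using \eqref{eq:adbd} yields $u_d^{2}/(2q)=\log d-\tfrac{1}{2}\bigl(\log\log d-\log(16/\pi)\bigr)+x+o(1)$, so the polynomial prefactor in the tail exactly cancels the $\sqrt{\log d}$ and the $\log(16/\pi)$ corrections, and $d\cdot P(M>u_d)\to e^{-x}$. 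For independent copies of $M$ the classical extreme value theorem then produces the Gumbel limit with the stated constants.

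For step (c) I would apply Chen--Stein Poisson approximation to the exceedance count $N_d:=\sum_{h=1}^{d}\mathbf{1}\{M_h>u_d\}$ with mean $\lambda_d:=d\cdot P(M>u_d)\to e^{-x}$. Since $\rho_{i,j}^{(d)}=0$ for $|i-j|>L_d$, the random variable $M_h$ is independent of $\{M_{h'}: h'\notin B_h\}$ for $B_h:=\{h':|h-h'|\leq L_d\}$. The standard Chen--Stein estimate then bounds $d_{TV}(N_d,\mathrm{Poisson}(\lambda_d))$ in terms of
\[
\mathrm{(I)}:=\sum_{h=1}^{d}\sum_{h'\in B_h}P(M_h>u_d)P(M_{h'}>u_d),\quad \mathrm{(II)}:=\sum_{h=1}^{d}\sum_{h'\in B_h\setminus\{h\}}P(M_h>u_d,M_{h'}>u_d).
\]
Term (I) is of order $L_d\lambda_d^{2}/d\to 0$ by \eqref{assump:Ld}. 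For term (II) I would represent $M_h=\sup_{0\leq s\leq t\leq q}(W_h(t)-W_h(s))$ as the supremum of a centered Gaussian field on $(h,s,t)$, discretize $[0,q]$ on a sufficiently fine grid (using the Gaussian modulus of continuity to bound the discretization error), and apply a Normal Comparison / Berman-type inequality to the resulting finite Gaussian maximum. The joint covariance of $W_h(t)-W_h(s)$ and $W_{h'}(t')-W_{h'}(s')$ equals $\rho_{h,h'}^{(d)}\cdot|[s,t]\cap[s',t']|$, so the saddle-point optimisation in the joint exceedance produces a pairwise bound of the form $P(M_h>u_d,M_{h'}>u_d)\leq d^{-c(\rho_+)+o(1)}$ with $c(\rho_+)=1+\bigl((1-\rho_+\sqrt{2-\rho_+^{2}})/(1-\rho_+^{2})\bigr)^{2}$. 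Summing over the $O(dL_d)$ relevant pairs gives $\mathrm{(II)}\leq d^{1+\Delta-c(\rho_+)+o(1)}$, which vanishes precisely under \eqref{assump:Ld}. Hence $d_{TV}(N_d,\mathrm{Poisson}(\lambda_d))\to 0$, and $P(\max_h M_h\leq u_d)=P(N_d=0)\to\exp(-e^{-x})$, which is \eqref{eq:maxconv}.

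The hard part will be the sharp two-point estimate in step (c). Because each $M_h$ is itself a supremum of a Gaussian process, the standard bivariate normal tail bound does not apply directly, and the argument requires a careful combination of discretisation, control of the Gaussian continuity error, and a two-parameter Berman-type inequality. Optimising the resulting Gaussian exponent over the time-overlap of $[s,t]$ and $[s',t']$ is what produces the unusual factor $\sqrt{2-\rho_+^{2}}$ in the allowed range of $\Delta$; the special case $\rho_+=0$ simply recovers the i.i.d.\ case already covered by step (b).
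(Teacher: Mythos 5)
Your overall framework is correct and matches the paper's: (a) sharp tail asymptotics of the range $1-\FM(x)\sim 4\sqrt{2q}/(\sqrt{\pi}x)\cdot e^{-x^2/(2q)}$, (b) calibration of $(a_d,b_d)$ so that $d\,\Pb(M>u_d(x))\to e^{-x}$, and (c) Chen--Stein Poisson approximation with the $L_d$-band dependence making $\Lambda_3=0$ and $L_d=o(d)$ killing $\Lambda_1$. Your computed exponent $c(\rho_+)=1+\big((1-\rho_+\sqrt{2-\rho_+^2})/(1-\rho_+^2)\big)^2$ for the two-point term also agrees with what the paper obtains. The genuine difference is in how the two-point estimate is carried out, and here the paper's route is considerably more elementary and avoids the technical debt you are flagging as ``the hard part.''

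Where you propose to discretize the field $(h,s,t)\mapsto W_h(t)-W_h(s)$, control the modulus of continuity, and apply a comparison inequality, the paper instead exploits the explicit Gaussian structure directly: write
$\big(W_i(t),W_j(t)\big)_{t}\eqd\big(W_i(t),\sqrt{1-\rho_{ij}^2}\,W_j'(t)+\rho_{ij}W_i(t)\big)_t$
with $W_j'$ an independent Brownian motion, apply the triangle inequality to get the pathwise domination $M_j\le\sqrt{1-\rho_{ij}^2}\,M_j'+|\rho_{ij}|\,M_i$, and then bound
$\Pb(M_i>u,\ M_j>u)\le\Pb\big(M_i>u,\ \sqrt{1-\rho_{ij}^2}M_j'+|\rho_{ij}|M_i>u\big)$
by splitting the $M_i$-integral at a free parameter $\delta$ and optimizing the pair $f(\delta)=(\rho_+^{-1}(1-\delta)+\delta)^2$, $g(\delta)=1+\delta^2(1-\rho_+)^2$. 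Solving $f(\delta^\star)=g(\delta^\star)$ yields exactly the exponent $c(\rho_+)$, with no grid, no modulus of continuity, and no normal-comparison lemma. This buys two things you would have to earn by hand: first, it never needs any control of the prefactor coming from the size of a discretization grid (in your approach the grid contributes a polynomial factor which, together with the penalty from replacing $u_d$ by $u_d-\epsilon$, has to be shown to be $d^{o(1)}$ uniformly over $O(dL_d)$ pairs of indices — doable, but delicate given that you need the modulus-of-continuity tail to beat $d^{-(1+\Delta)}$); second, it gives the exponent as the value of a one-parameter optimization in $\delta$ rather than an unworked ``saddle-point over the time-overlap of $[s,t]$ and $[s',t']$.'' One small terminological correction: what you actually need in your route is a bivariate Gaussian joint-exceedance tail bound, not a Berman-type normal comparison inequality — Berman's lemma compares the non-exceedance probabilities of maxima of two Gaussian vectors with different covariance structures and does not directly produce the two-point tail decay you require.
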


\noindent 
In the proof of Theorem \ref{thm:mainGumbel} below, we will use a Gaussian approximation which leads to the maximum of the ranges $M_h$
such that Theorem \ref{thm:gumbelWiener.dep} can be applied.
As indicated by \eqref{size}, the limit distribution of the statistic $\hatTmd$ defined in \eqref{hol1} has to be derived for the case $m,d\rightarrow\infty$ in order to determine an appropriate asymptotic critical value.
For this purpose, recall the definition of $\hatE_{m,h}(k)$ in \eqref{hol31} and define by 
\begin{align}\label{def:Tmd}
\Tmd &:= \maxhd \maxkTm w(k/m)\dfrac{\hatsigma_h}{\sigma_h} \hatE_{m,h}(k) = \maxhd \maxkTm w(k/m) \max_{j=0}^{k-1} \dfrac{k-j}{\sqrt{m}\sigma_h} \Big|\hatmu_{m+j+1}^{m+k}(h) - \hatmu_1^{m+j}(h) \Big|
\end{align}
a version of the statistic $\hatTmd$, where all component-wise long-run variance estimators $\hatsigma_h$ have been replaced by the (unknown) true long-run variances $\sigma_h$.

\noindent
In the remainder of this paper, we will work with the sequences $a_d, b_d$ defined in \eqref{eq:adbd} with $q := \qT = T/(T+1)$.
The following theorem yields the asymptotic distribution of $\Tmd$ as $m,d \to \infty$.

\begin{theorem}\label{thm:mainGumbel}
Suppose that the null hypothesis $H_0$ defined in \eqref{def:hypos} holds.
Under the Assumptions \ref{assump:model} - \ref{assump:spatialDependence} it follows that
\begin{align*}
a_d \big( \Tmd - b_d \big) \convd G~~, \mbox{ as } m,d \to \infty~,
\end{align*} 
where $G$ denotes a standard Gumbel random variable.
\end{theorem}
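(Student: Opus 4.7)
The plan proceeds in three stages. First, I rewrite $\Tmd$ as the maximum over components $h$ of the range of a normalized partial-sum process, making explicit the connection with Theorem \ref{thm:gumbelWiener.dep}. Second, I apply a high-dimensional Gaussian approximation to replace $\Tmd$ by an analogous statistic built from a Gaussian array with the same long-run covariance. Third, I use a Gaussian (Slepian-type) comparison to band the spatial correlation matrix beyond lag $L_{d}$, reducing the problem to a direct application of Theorem \ref{thm:gumbelWiener.dep}.

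For the first step, writing $S_{n,h}:=\sum_{t=1}^{n}e_{t,h}$ and using $w(k/m)=m/(m+k)$, a direct computation of $(k-j)(\hatmu_{m+j+1}^{m+k}(h)-\hatmu_{1}^{m+j}(h))$ under $H_{0}$ yields
\begin{align*}
w(k/m)\,\hatE_{m,h}(k)\;=\;\frac{\sqrt{m}}{\sigma_{h}}\,\max_{j=0}^{k-1}\Big|\tfrac{S_{m+k,h}}{m+k}-\tfrac{S_{m+j,h}}{m+j}\Big|,
\end{align*}
so that $\Tmd=\maxhd(\sqrt{m}/\sigma_{h})\bigl[\max_{k=0}^{Tm}U_{m+k,h}-\min_{k=0}^{Tm}U_{m+k,h}\bigr]$ with $U_{n,h}:=S_{n,h}/n$. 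Under the time change $u=m/(m+k)$, the process $u\mapsto\sqrt{m}\,S_{m+k,h}/(\sigma_{h}(m+k))$ has the same law as $u\,\tilde W_{h}(1/u)$ with $\tilde W_{h}$ a standard Brownian motion; by time-reversal of Brownian motion this is again a Brownian motion in $u$, and its range over $u\in[1/(1+T),1]$, an interval of length $\qT$, is exactly the one-component limit appearing in \eqref{conv:1dim}.

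Next, I invoke the high-dimensional Gaussian approximation of \cite{Zhang2018b} to couple $(e_{t,h})_{t\le m+Tm,\,h\le d}$ with a centered Gaussian array $(Z_{t,h})$ having the same long-run covariance $\gamma_{h_{1},h_{2}}$. Because $\Tmd$ is the maximum of $O(d\cdot(Tm)^{2})$ linear functionals of $(e_{t,h})$, the relevant quantity is the sup-norm Gaussian approximation rate. Assumption \ref{assump:RV} with $B<3/8$, the geometric decay in Assumption \ref{assump:temporalDependence} with $p>2D+4$, and the polynomial dimension growth in \eqref{assump:D1} together make this rate $o((\log d)^{-1/2})$. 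A Gaussian anti-concentration inequality then converts this into $\Tmd-\TmdZ=o_{\Pb}(1/\sqrt{\log d})$, where $\TmdZ$ is the analogous statistic computed from the partial sums of $Z_{t,h}$; this is negligible against the Gumbel scale $a_{d}^{-1}\asymp(\log d)^{-1/2}$.

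In the third stage I replace the long-run correlation matrix $\Sigma_{d}=(\rho_{h_{1},h_{2}})$ by the banded matrix $\tilde\Sigma_{d}$ with entries $\rho_{h_{1},h_{2}}\mathbf{1}\{|h_{1}-h_{2}|\le L_{d}\}$, choosing $L_{d}=\lfloor(\log d)^{\kappa}\rfloor$ with $\kappa$ large enough that \eqref{assump:Ld} is satisfied. By Assumption \ref{assump:spatialDependence}, the weighted tail $\sum_{|h_{1}-h_{2}|>L_{d}}|\rho_{h_{1},h_{2}}|$ decays polynomially in $L_{d}$, so a Slepian--Sudakov--Fernique-type comparison yields $|\TmdZ-\tildeTmdZ|=o_{\Pb}(1/\sqrt{\log d})$. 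A tightness argument for the stationary Gaussian process then upgrades the discrete range on the grid $\{k/m\}_{k=0}^{Tm}$ to the continuous range $M_{h}=\sup_{[0,\qT]}W_{h}-\inf_{[0,\qT]}W_{h}$, where $(W_{1},\ldots,W_{d})$ is the Brownian motion with correlation matrix $\tilde\Sigma_{d}$. Theorem \ref{thm:gumbelWiener.dep} then gives $a_{d}(\maxhd M_{h}-b_{d})\convd G$, and chaining the three approximations delivers the same Gumbel limit for $\Tmd$. The hardest ingredient is the Gaussian approximation in stage two: one must beat the Gumbel scale $(\log d)^{-1/2}$ uniformly over $O(dm^{2})$ linear combinations of only sub-exponential, weakly dependent observations, and the interplay between $B$, $p$, and $D$ in Assumptions \ref{assump:RV}--\ref{assump:temporalDependence} is delicately tuned via \eqref{assump:D1} to make the Zhang--Wu bound polynomially small.
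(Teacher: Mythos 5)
Your overall architecture matches the paper's proof (rewrite $\Tmd$ as a maximum of component-wise ranges, apply the Zhang--Cheng Gaussian approximation, compare against a banded correlation matrix, pass to the continuous range, invoke Theorem \ref{thm:gumbelWiener.dep}), and the algebraic identity in Stage~1 is correct. Two steps, however, do not go through as you have stated them. First, you never truncate away the pairs $(j,k)$ with $k-j$ small, and this truncation is a prerequisite for every tool you invoke afterwards. The Zhang--Cheng approximation, the Chernozhukov anti-concentration bound, and the Gaussian comparison inequality all require the variances of the linear functionals entering the maximum to be bounded \emph{below} by a fixed positive constant. The term indexed by $(j,k,h)$ has variance of order $\frac{(k-j)\,w(k/m)^2}{m}\big(1+\tfrac{k-j}{m+j}\big)$, which is $O(1/m)$ when $k-j=O(1)$. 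Only after restricting to $k-j\ge t_0 m$ for some fixed $t_0>0$ (and showing separately, via a Nagaev-type maximal inequality, that the discarded terms cannot reach the Gumbel threshold) do you obtain a uniform lower bound of order $t_0$. The paper devotes one lemma to installing this truncation and one to removing it at the Brownian-motion level; both are indispensable.

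Second, your choice $L_d=\lfloor(\log d)^\kappa\rfloor$ for the banding width is too small, and your characterization of the spatial decay is not what Assumption \eqref{assump:SD1} provides. Condition \eqref{assump:Ld} is an \emph{upper} bound on $L_d$, so any polylogarithmic sequence satisfies it trivially; the binding constraint is the other direction. Assumption \eqref{assump:SD1} gives only $|\rho_{h_1,h_2}|\le(\log|h_1-h_2|)^{-2}r_{|h_1-h_2|}$ with $r_m\to0$ at an unspecified rate --- logarithmic, not polynomial, decay. After banding, the Chernozhukov comparison bound involves $\Delta_m^{1/3}\big(\log d_{\bm V}\big)^{2/3}$ with $\Delta_m\lesssim\sup_{|h_1-h_2|>L_d}|\rho_{h_1,h_2}|\lesssim(\log L_d)^{-2}r_{L_d}$ and $\log d_{\bm V}\asymp\log d$; for this to vanish you need $(\log L_d)^{-2}r_{L_d}=o\big((\log d)^{-2}\big)$. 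With $L_d=(\log d)^\kappa$ one only gets $(\log L_d)^{-2}\asymp(\log\log d)^{-2}$, which is far too large, and there is no rate for $r_{L_d}$ to rescue it. The paper takes $L_d=d^\Delta$ with $\Delta$ small enough for \eqref{assump:Ld}, which makes $(\log L_d)^{-2}\asymp(\log d)^{-2}$ so that $r_{L_d}\to0$ supplies exactly the extra $o(1)$ needed. Replacing your polylogarithmic $L_d$ with $d^\Delta$, and inserting the truncation step, would bring your argument in line with the paper's.
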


\noindent
Note that due to the choice of $a_d, b_d$ the limit distribution does not depend on the monitoring parameter $T$, which controls the length of the monitoring period, while the statistic $\hatTmd$ does depend on $T$.

Given Theorem \ref{thm:mainGumbel} our final task is to identify suitable long-run variance estimators to obtain the asymptotic distribution of $\hatTmd$.
We will identify a general condition on the estimators in Assumption \ref{assump:varEstimator}, which guarantees that all true long-run variances $\{\sigma_h^2\}_{h=1, \ldots , d} $ in the statistic $\Tmd $ can be replaced by their corresponding estimators.
Explicit estimators satisfying this assumption are constructed in Remark \ref{rem:LRestimator}.

\begin{assump}\label{assump:varEstimator}
{\rm
Suppose that there exists a long-run variance estimator $\hatsigma_h=\hatsigma_h(m)$ based only on the stable initial set, such that $\Pb\big( \maxhd |\hatsigma_h -\sigma_h | \geq m^{-\delta_{\sigma}} \big) \leq c_{\sigma}' m^{-C_{\sigma}'}$,
where $c_\sigma'$ is a sufficiently large constant and $C_{\sigma}'>0$, $\delta_{\sigma}>0$ are sufficiently small constants.
}
\end{assump}

\begin{remark}\label{rem:LRestimator}
In the field of sequential change point detection it is common to use only the initial stable data for the estimation of the long-run variance as this ensures that the estimate cannot be corrupted by a change [see for instance \cite{Aue2012}, \cite{Wied2013} or \cite{Fremdt2014} among many others].
It follows from \cite{Jirak2015} that Assumption \ref{assump:varEstimator} holds for the standard long-run variance estimators
\begin{align}\label{def:LRVestimator}
\hatsigma_{h,strd}^2
= \hat{\phi}_{0,h} + 2\sum_{t=1}^{H_m} \hat{\phi}_{t,h}\,, \qquad h=1, \ldots , d \,,
\end{align}
where $\hat{\phi}_{t,h}$ denotes the lag $t$ auto-covariance estimator in component $h$, that is
\begin{align}\label{def:autocovestimator}
\hat{\phi}_{t,h}
:= \dfrac{1}{m-t} \sum_{i=t+1}^{m}\big(X_{i,h} - \hatmu_1^{m}(h)\big)\big(X_{i-t,h} - \hatmu_1^{m}(h)\big)~.
\end{align}
The bandwidth parameter $H_m$ in \eqref{def:LRVestimator} is bounded by $m^{\eta}$ for some $\eta=\eta(p,D)$ that fulfills constraints with respect to the constants $D$ and $p$ from Assumptions \ref{assump:model} and \ref{assump:temporalDependence} [see Assumption 2.2 in \cite{Jirak2015}].
\end{remark}

\begin{corollary}
\label{cor:Gumbel}
If the null hypothesis $H_0$ defined in \eqref{def:hypos} holds and the Assumptions \ref{assump:model} - \ref{assump:spatialDependence} and \ref{assump:varEstimator}
are satisfied, it follows that
\begin{align*}
a_d \big( \hatTmd - b_d \big) \convd G\,, \quad\mbox{ as }m,d\rightarrow\infty\,~. 
\end{align*}
where $G$ denotes a standard Gumbel random variable.
In particular, $\hatTmd / \sqrt{\log d} $ converges in probability to $\sqrt{2 q(T)}$.
\end{corollary}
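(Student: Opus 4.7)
\textbf{Proof proposal for Corollary \ref{cor:Gumbel}.} The plan is to deduce the corollary from Theorem \ref{thm:mainGumbel} via a Slutsky-type argument, showing that replacing $\sigma_h$ by $\hatsigma_h$ only perturbs the statistic by $o_P(1/a_d)$. Since $a_d \asymp \sqrt{\log d}$ and, under Assumption \ref{assump:model}, $\log d \asymp \log m$, this perturbation will turn out to be negligible under Assumption \ref{assump:varEstimator}.

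First I would exploit the scale-invariant structure of $\hatE_{m,h}$. Writing
\begin{align*}
A_h := \maxkTm w(k/m)\,\max_{j=0}^{k-1}\frac{k-j}{\sqrt{m}\,\sigma_h}\Big|\hatmu_{m+j+1}^{m+k}(h)-\hatmu_1^{m+j}(h)\Big|,
\end{align*}
one has $\Tmd=\maxhd A_h$ and $\hatTmd=\maxhd (\sigma_h/\hatsigma_h)\,A_h$. Using the elementary bound $|\max_h c_h A_h-\max_h A_h|\le \maxhd A_h\,|c_h-1|$ with $c_h=\sigma_h/\hatsigma_h$, I obtain
\begin{align*}
\big|\hatTmd - \Tmd\big| \;\le\; \Tmd \cdot \maxhd \Big|\tfrac{\sigma_h}{\hatsigma_h}-1\Big|.
\end{align*}

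Next I would control the two factors on the right. By Theorem \ref{thm:mainGumbel}, $a_d(\Tmd-b_d)$ is tight, hence $\Tmd = O_P(\sqrt{\log d})$ since $b_d \asymp \sqrt{\log d}$ and $a_d \asymp \sqrt{\log d}$. For the second factor I would use Assumption \ref{assump:varEstimator} together with the uniform lower bound $\sigma_h \ge c_\sigma$ from \eqref{assump:TD2}: with probability at least $1-c_\sigma' m^{-C_\sigma'}$, $\maxhd|\hatsigma_h-\sigma_h|\le m^{-\delta_\sigma}$, and on that event $\maxhd|\sigma_h/\hatsigma_h - 1| \le m^{-\delta_\sigma}/(c_\sigma - m^{-\delta_\sigma}) = O(m^{-\delta_\sigma})$. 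Combining,
\begin{align*}
a_d\big|\hatTmd-\Tmd\big| \;=\; O_P\!\big(\sqrt{\log d}\cdot \sqrt{\log d}\cdot m^{-\delta_\sigma}\big)\;=\;O_P(\log d \cdot m^{-\delta_\sigma}).
\end{align*}
Assumption \eqref{assump:D1} gives $\log d = O(\log m)$, so the right-hand side is $o_P(1)$. By Slutsky's theorem,
\begin{align*}
a_d\big(\hatTmd-b_d\big) = a_d\big(\Tmd-b_d\big) + a_d\big(\hatTmd-\Tmd\big) \convd G.
\end{align*}

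The main obstacle is essentially bookkeeping: one must check that the polynomial-in-$m$ bound from Assumption \ref{assump:varEstimator} dominates the logarithmic blow-up coming from $a_d$ and $\Tmd$, which is exactly what Assumption \eqref{assump:D1} is designed to ensure. Finally, for the consistency statement, I would write $\hatTmd/\sqrt{\log d} = b_d/\sqrt{\log d} + (\hatTmd-b_d)/\sqrt{\log d}$; by \eqref{eq:adbd} the first term converges to $\sqrt{2q(T)}$, while the second term is $O_P(1/(a_d\sqrt{\log d}))=o_P(1)$, yielding $\hatTmd/\sqrt{\log d}\cip \sqrt{2q(T)}$.
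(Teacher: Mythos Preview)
Your proposal is correct and is precisely the Slutsky-type argument the paper has in mind: the paper omits the proof entirely, merely pointing to Theorem 2.5 in \cite{Jirak2015} and Theorem 3.11 in \cite{Dette2018}, whose relevant parts carry out exactly the bound $|\hatTmd-\Tmd|\le \Tmd\cdot\maxhd|\sigma_h/\hatsigma_h-1|$ and then combine $\Tmd=O_\Pb(\sqrt{\log d})$ with the polynomial rate $m^{-\delta_\sigma}$ from Assumption \ref{assump:varEstimator}. Your derivation of the ``in particular'' statement from the explicit form of $a_d,b_d$ is also the intended one.
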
\vspace{0.2cm}
\noindent 
If $g_{1-\alpha}$ denotes the $(1-\alpha)$ quantile of the standard Gumbel distribution, we obtain from Corollary \ref{cor:Gumbel} that the sequential procedure defined by \eqref{hol1} with $c_{d,\alpha}:= g_{1-\alpha}/a_d+b_d$ has asymptotic size $\alpha$, i.e.
\begin{align}\label{eq:levelalpha}
\limmd \Pb_{H_0}\Big(\hatTmd > \dfrac{g_{1-\alpha}}{a_d} + b_d \Big)
= \alpha\,.
\end{align}
The next theorem yields consistency of this monitoring scheme under the alternative hypothesis of a change in the mean vector.

\begin{theorem}\label{thm:alternative}
Under the alternative hypothesis $H_1$ defined in \eqref{def:hypos}, assume that there is a component $h^*$ and a time point $k^*=k^{*}(m)$ such that
\begin{align}\label{assump:alt1}
\sqrt{\dfrac{m}{\log m}}\cdot \big|\mu_{m+k^*-1,h^*} - \mu_{m+k^*,h^*}\big| \to \infty
\;\;\;
\text{and}
\;\;\;
\limsup_{m\to \infty} \dfrac{k^*}{m} < T~.
\end{align}
If Assumptions \ref{assump:model} - \ref{assump:spatialDependence} and \ref{assump:varEstimator} are satisfied, it follows that
\begin{align*}
\limmd \Pb_{H_1}\Big(\hatTmd > \dfrac{g_{1-\alpha}}{a_d} + b_d \Big) = 1~.
\end{align*}
\end{theorem}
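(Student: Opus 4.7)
The plan is to lower bound $\hatTmd$ by evaluating the triple maximum at one carefully chosen triple $(h,k,j)$, and to show that this single term already diverges faster than the critical value $c_{d,\alpha}=g_{1-\alpha}/a_d+b_d$. From the definitions \eqref{eq:adbd} and Assumption~\ref{assump:D1}, one has $c_{d,\alpha}=O(\sqrt{\log d})=O(\sqrt{\log m})$, so it suffices to produce a lower bound of order $\sqrt m\,\Delta$, where $\Delta:=|\mu_{m+k^*-1,h^*}-\mu_{m+k^*,h^*}|$. The assumption $\sqrt{m/\log m}\,\Delta\to\infty$ then yields the conclusion.

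Concretely, I would fix the spatial component $h=h^*$, choose the time point $k=\lfloor Tm\rfloor$ (admissible in the closed-end scheme), and pick the splitting index $j=k^*-1$ in the inner maximum defining $\hatE_{m,h^*}(k)$. This clean split gives
\[
\hatE_{m,h^*}(k)\;\ge\;\frac{k-k^*+1}{\sqrt m\,\hatsigma_{h^*}}\,\Big|\hatmu_{m+k^*}^{m+k}(h^*)-\hatmu_1^{m+k^*-1}(h^*)\Big|\,,
\]
where under $H_1$ (see \eqref{def:hypos}) the two empirical means have expectations $\mu_{m+k^*,h^*}$ and $\mu_{m+k^*-1,h^*}$ respectively, and both are taken over samples of size $\asymp m$: the post-change average contains $k-k^*+1\ge(T-\delta)m$ terms for some $\delta<T$ by \eqref{assump:alt1}, and the pre-change average contains $m+k^*-1\ge m$ terms. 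Under Assumption~\ref{assump:temporalDependence}, the geometric decay $\vartheta_{t,h,p}\le C_\vartheta\beta^t$ implies a standard second-moment bound $\Var\bigl(\tfrac1n\sum_t e_{t,h^*}\bigr)=O(1/n)$, so Chebyshev yields
\[
\Big|\hatmu_{m+k^*}^{m+k}(h^*)-\hatmu_1^{m+k^*-1}(h^*)\Big|\;\ge\;\Delta-O_{\Pb}(m^{-1/2})\,.
\]
Combining this with the facts that $\hatsigma_{h^*}$ is bounded from above in probability (by Assumption~\ref{assump:varEstimator} together with (TD2) bounding $\sigma_{h^*}$ away from $0$) and that the weight satisfies $w(k/m)\to 1/(1+T)>0$, one obtains
\[
w(k/m)\,\hatE_{m,h^*}(k)\;\ge\;c\,\sqrt m\,\Delta\,\bigl(1-o_{\Pb}(1)\bigr)
\]
for some deterministic constant $c>0$ depending only on $T$, $\delta$, and the upper bound on $\hatsigma_{h^*}$.

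It then remains to assemble the pieces: since $\log d\asymp \log m$ by Assumption~\ref{assump:D1}, the hypothesis $\sqrt{m/\log m}\,\Delta\to\infty$ gives $\sqrt m\,\Delta/\sqrt{\log d}\to\infty$, so the lower bound displayed above exceeds the order-$\sqrt{\log d}$ threshold $c_{d,\alpha}$ with probability tending to one; as $\hatTmd\ge w(k/m)\hatE_{m,h^*}(k)$, this proves the claim. The main obstacle I foresee is simply bookkeeping: one must verify simultaneously, uniformly in the location of $k^*$ as long as $\limsup k^*/m<T$, that both sample-mean deviations are $O_{\Pb}(m^{-1/2})$ and that the long-run variance estimator at the single relevant coordinate $h^*$ stays in a bounded range, both of which are routine consequences of the physical-system moment bounds combined with Assumption~\ref{assump:varEstimator}.
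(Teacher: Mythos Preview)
Your proposal is correct and follows essentially the same route as the paper: choose $h=h^*$, $k=Tm$, $j=k^*-1$, split into signal $\Delta$ plus centered fluctuations, and compare the resulting lower bound of order $\sqrt m\,\Delta$ against the threshold of order $\sqrt{\log d}\asymp\sqrt{\log m}$. The only noteworthy difference is in how the noise term is controlled: the paper invokes the functional CLT of \cite{Wu2005} to show that the centered piece converges in distribution to a finite limit, whereas you use the more elementary variance bound plus Chebyshev to get $O_\Pb(m^{-1/2})$ deviation of each empirical mean; both yield an $O_\Pb(1)$ contribution after the $(k-k^*+1)/\sqrt m\asymp\sqrt m$ prefactor, so the conclusion is the same. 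One small cosmetic point: your display $c\sqrt m\,\Delta(1-o_\Pb(1))$ should really be read as $c\sqrt m\,\Delta-O_\Pb(1)$, which becomes the stated form only because $\sqrt m\,\Delta\to\infty$; this is implicit in your argument but worth stating.
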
\vspace{0.2cm}
\noindent Condition \eqref{assump:alt1} shows that the test is able to detect alternatives which converge to the null hypothesis at the rate of $m^{-1/2}$ up to a factor $c_m \sqrt{\log m}$ with a sequence $(c_m)_{m\in \mathbb{N}}$ tending to $\infty$ at an arbitrary slow rate.
This factor is needed to address for the high dimensional setting.
Note also the time $m+k^*$ of the change is not permitted to be close to the end $m + mT$, which reflects the necessity to have a reasonably large sample after the change point such that the corresponding means can be estimated with sufficient precision.

\begin{remark} \label{aggre}

In this paper, we used the maximum statistic for the aggregation of the different components, which has been used before for other problems [see, for example, \cite{Tartakovsky2006}].
Alternative statistics for aggregation could be used as well and there are numerous possibilities to choose from.
The choice of the statistic depends on the type of alternative, which one is interested in. 
For example, robust schemes as proposed by \cite{Mei2010} are based on the sum of (weighted) local statistics.
In the present context, such a statistic reads as 
\begin{align}
\sum_{h=1}^d \maxkTm w(k/m)\hatE_{m,h}(k)~,
\end{align}
which we suspect to be asymptotic Gaussian under appropriate standardization.
It has been argued (mainly by simulations) that maximum-type statistics are more effective than sums when potential changes occur in only a few data streams
[see \cite{Mei2010,Xie2013,Zou2015}].
On the other hand, when the change occurs in a moderate or large number of components, sum-type statistics outperform the maximum.

An improvement could also be obtained using the spatial information to construct a detector from the individual components.
In the context of offline change point detection such a strategy was proposed by \cite{Wansam2017} for the case of independent Gaussian data (in time and space), where sample splitting is used  to estimate ``optimal directions''.
They also indicate how the methodology can be extended to either temporally or spatially independent Gaussian data. 
In principle, such a strategy could be used in online monitoring of (non-Gaussian) time and stationary dependent data with a stable initial sample.
Alternatively, one could partition the estimate of the covariance matrix from the first step into blocks of highly correlated elements, aggregate the components corresponding to each block by a sum and finally calculate the maximum of sums corresponding to the different blocks. 
As indicated by the discussion in \cite{Wansam2017} a theoretical analysis of such a two stage procedure in the general case considered in this paper is very difficult (maybe intractable).
Moreover, independently of the theoretical aspects, some care is necessary, if such strategies are used in applications.
In particular, it requires estimates of the covariance structure with sufficient precision.
If the estimates from the first step have a too large variability adapted aggregation schemes may perform worse than non-adapted.
To our knowledge, this phenomenon has not been discussed in the context of aggregation of statistics from individual components of high dimensional time series but it is well known in other circumstances such as adaptive designs - see, for example, \cite{Dette2013}.
For high dimensional time series it is difficult to obtain estimates of the (long-run) covariances with sufficient precision such that it can be used in a two-step procedure.
Therefore we do not recommend the use of an adaptive aggregation in this context.
\end{remark}

\subsection{Bootstrap quantiles}\label{sec32}
Equation \eqref{eq:levelalpha} and Theorem \ref{thm:alternative} show that the new sequential testing procedure \eqref{hol1} with $c_{d,\alpha}= g_{1-\alpha}/a_d+b_d$ has asymptotic level $\alpha$ and is consistent.
However, the approach so far is based on an approximation of the distribution of the statistic by a Gumbel distribution featuring the well-known disadvantage that the convergence rates in such limiting results are rather slow.
As a consequence for small sample sizes, these quantiles may yield slightly imprecise approximations in practical applications.
To tackle this problem, we will propose a bootstrap procedure.
Note that the development of resampling procedures in the sequential regime is a difficult problem. 
On the one hand, critical values can be computed only from the initial stable sample, but this set can be too small to obtain reliable values.
On the other hand, one can compute new critical values with each new data point, which is computationally expensive and can be corrupted by an undetected structural break.
Therefore, both approaches have natural advantages and disadvantages.
For i.i.d.~data \cite{Kirch2008} proposes a bootstrap procedure for sequential detection of a structural break in the mean of an one-dimensional sequence by a combination of both methods following ideas of \cite{Steland2006}.
Nevertheless, the construction of bootstrap methodology for sequential change point detection in the high dimensional regime remains challenging.

To be precise, note that by Lemma \ref{lem:Truncation} and \ref{lem:gaussianapprox} used in the proof of Theorem \ref{thm:mainGumbel} in the online supplement we obtain the approximation
\begin{align}\label{eq:BootstrapClose}
\Pb \Big( a_d\big(\Tmd-b_d\big) \leq x \Big) - \Pb \Big( a_d\big(\TmdZ - b_d\big) \leq x \Big) = o(1)~.
\end{align}
Here the statistic $\TmdZ$ is the counterpart of $\Tmd$ computed from standard Gaussian random variables $Z_{t,h}$, which are independent in time and have spatial dependence structure $\Cov\big(Z_{0,h},Z_{0,i}\big) = \rho_{h,i}$,
where $\rho_{h,i}$ are the long-run correlations defined in \eqref{eq:longruncorr}.
In view of the approximation \eqref{eq:BootstrapClose}, it is therefore reasonable to obtain the quantiles for the statistic $\Tmd$ from those of the statistic $\TmdZ$, which can easily be simulated if the correlations $\rho_{h,i}$ were known.
These parameters can be straightforwardly estimated from the initial stable data set $\bm{X}_1,\dots,\bm{X}_m$.

Compared to a bootstrap procedure continuously performed during monitoring, this idea exhibits two important advantages.
Firstly, it ensures that the correlation estimates cannot be corrupted by a mean change, that may occur during the monitoring period.
Secondly, it requires less computational effort, as the quantile is only computed once before monitoring is commenced.
This is of vital importance in a high dimensional setup, where the method on its own is already quite expensive and resampling and/or repeated estimations during monitoring may quickly exceed the computational resources.

Before discussing the technical details of this resampling procedure, we state a necessary assumption regarding the precision of the estimates of the long-run covariances.
\begin{assump}\label{assump:Bootstrap}
{\rm
Suppose that there exists a long-run covariance estimator $\hatgamma_{h,i}=\hatgamma_{h,i}(m)$ based on the stable initial set $\bm{X}_1,\dots,\bm{X}_m$, such that $\Pb\big( \max_{h,i=1}^d \big| \hat{\gamma}_{h,i} - \gamma_{h,i} \big| \geq m^{-\delta_\gamma} \big) \leq c_\gamma m^{-C_\gamma}$,
where $c_\gamma$ is a sufficiently large constant and $C_\gamma>0$, $\delta_{\gamma}>0$ are sufficiently small constants.}
\end{assump}

\begin{remark}
A canonical choice for a long-run covariance estimator that satisfies Assumption~\ref{assump:Bootstrap}, is the standard estimator
\begin{align}\label{def:lrcov}
\hat{\gamma}_{h,i,strd}
= \hat{\phi}_{0,h,i} + \sum_{t=1}^{H_m} \hat{\phi}_{t,h,i} + \sum_{t=1}^{H_m}\hat{\phi}_{t,i,h}~,
\end{align}
where $H_m$ is an appropriate bandwidth and the involved cross-components covariance estimators are given by $\hat{\phi}_{t,h,i}
:= (m-t)^{-1} \sum_{j=t+1}^{m}\big(X_{j,h} - \hatmu_1^{m}(h)\big)\big(X_{j-t,i} - \hatmu_1^{m}(i)\big)$.
Note that these definitions are natural extensions of the long-run variance and auto-covariance estimators in \eqref{def:LRVestimator} and \eqref{def:autocovestimator}, respectively.
Therefore one can use similar arguments as given in \cite{Jirak2015} (for the verification of Assumption \ref{assump:varEstimator}) to prove the consistency stated in Assumption~\ref{assump:Bootstrap}.
\end{remark}\vspace{0.2cm}

\noindent In the following, denote by $\mathcal{X}=\sigma(\bm{X}_1,\dots,\bm{X}_m)$ the $\sigma$-algebra generated by the initial sample and let $\PbX, \Cov_{|\mathcal{X}}$ denote the conditional probability and covariance with respect to $\mathcal{X}$.
To define the bootstrap statistic, let
\begin{align}\label{eq:BootstrapZ}
\Big\{ \widehat{\bm{Z}}_t=\big(\hatZ_{t,1},\dots,\hatZ_{t,d}\big)^\top \Big\}_{t=1,\dots,m+Tm}
\end{align}
denote centered random vectors, that are - conditionally on $\mathcal{X}$ - independent and Gaussian distributed with covariance structure
\begin{align}\label{eq:CovarBootstrapZ}
\Cov_{|\mathcal{X}}\Big(\hatZ_{t,h},\,\hatZ_{t,i}\Big) = \hat{\rho}_{h,i}~,
\end{align}
where $\hat{\rho}_{h,i}$ are correlation estimators canonically defined by $\hat{\rho}_{h,i} = \hatgamma_{h,i} /(\hatsigma_h\hatsigma_i)$ for $h\neq i$ and \\ $\hat{\rho}_{h,h} = 1$.
Note that by the definition in \eqref{eq:CovarBootstrapZ} the random vectors preserve the (estimated) spatial correlation structure of the time series.
Next, denote the component-wise mean estimators for subsamples of $\{\hatZ_{t,h}\}_{t=1}^{m+Tm}$ by 
$\hat{z}_i^j(h) := (j-i+1)^{-1} \sum_{t=i}^j \hatZ_{t,h}$
and the final bootstrap statistic by
\begin{align}\label{def:BSstatistic}
\hatTmdZ :=
\max_{k=1}^{Tm} \maxhd \max_{j=0}^{k-1} \dfrac{w(k/m)(k-j)}{\sqrt{m}}\Big| \hat{z}_{m+j+1}^{m+k}(h) - \hat{z}_1^{m+j}(h) \Big|~.
\end{align}
Once the correlation estimates $\hat{\rho}_{h,i}$ are computed, the conditional distribution of $\hatTmdZ$
can be approximated by Monte-Carlo simulations with arbitrary precision, generating replicates of $\{\hatZ_{t,h}\}_{t=1, \ldots ,m +mT}^{h=1,\ldots d} $.
Thus, provided with a batch of realizations of the statistic $\hatTmdZ$, one can compute the corresponding empirical quantile for the desired test level and launch the sequential procedure with this bootstrap quantile instead of the (possibly less precise) Gumbel quantile.
The following result yields the validity of our proposed bootstrap procedure.
\begin{theorem}[Bootstrap consistency]\label{thm:Bootstrap}
Under the Assumptions \ref{assump:model}-\ref{assump:spatialDependence} and \ref{assump:Bootstrap} we have 
\begin{align*}
\sup_{x \in \R} 
\bigg| \Pb_{|\mathcal{X}}\Big( a_d\big(\hatTmdZ - b_d\big) \leq x \Big) 
- \Pb_{H_0}\Big( a_d\big(\hatTmd - b_d\big) \leq x \Big) \bigg|
= o_{\Pb}(1)~,
\end{align*}
where $\Pb_{H_0}$ denotes the probability under the null hypothesis of no change in any component.
\end{theorem}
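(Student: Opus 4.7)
The plan is to introduce a triangle-inequality decomposition using the Gumbel limit $F_G(x)=\exp(-\exp(-x))$ as an intermediary. Writing
\begin{align*}
\sup_{x\in\R}\bigl|\Pb_{|\mathcal{X}}(a_d(\hatTmdZ-b_d)\le x) - \Pb_{H_0}(a_d(\hatTmd-b_d)\le x)\bigr|
&\le \sup_{x}\bigl|\Pb_{|\mathcal{X}}(a_d(\hatTmdZ-b_d)\le x)-F_G(x)\bigr|\\
&\quad+\sup_x\bigl|F_G(x)-\Pb_{H_0}(a_d(\hatTmd-b_d)\le x)\bigr|,
\end{align*}
the second supremum is $o(1)$ by Corollary~\ref{cor:Gumbel} together with continuity of $F_G$ and Polya's theorem. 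The task thus reduces to bounding the first supremum in probability.

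For that first supremum I would insert a further intermediate law, namely that of $\TmdZ$. The Gaussian approximation \eqref{eq:BootstrapClose} together with Theorem~\ref{thm:gumbelWiener.dep} (as used inside the proof of Theorem~\ref{thm:mainGumbel}) yields $a_d(\TmdZ-b_d)\convd G$, so, using Polya's theorem and the positivity of $a_d$, it suffices to show
\begin{align*}
\sup_{x\in\R}\bigl|\Pb_{|\mathcal{X}}(\hatTmdZ\le x)-\Pb(\TmdZ\le x)\bigr| = o_{\Pb}(1).
\end{align*}
Both $\hatTmdZ$ and $\TmdZ$ are maxima of the absolute values of a centred Gaussian vector indexed by triples $(h,j,k)\in\{1,\dots,d\}\times\{0,\dots,k-1\}\times\{1,\dots,Tm\}$, so the effective dimension is $p=O(dm^2)=O(m^{D+2})$ by \ref{assump:D1}. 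Because $\{\widehat{\bm Z}_t\}$ and the corresponding Gaussian sequence used for $\TmdZ$ are temporally independent, each entry of the covariance matrix of the ambient vector is a weighted sum of $\hat\rho_{h,i}$ respectively $\rho_{h,i}$ with weights of the form $m^{-1}(k-j)w(k/m)$, which are uniformly bounded. Consequently
\begin{align*}
\Delta := \max_{u,v}\bigl|\Sigma^{\hatTmdZ}_{u,v}-\Sigma^{\TmdZ}_{u,v}\bigr| \le C\maxhid\bigl|\hat\rho_{h,i}-\rho_{h,i}\bigr|.
\end{align*}

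I would then invoke the Gaussian comparison inequality of \cite{Chernozhukov2013} (applied to the concatenation of the vector with its negative in order to pass from maxima to maxima of absolute values) to obtain, on the event where the correlation estimates are good,
\begin{align*}
\sup_x\bigl|\Pb_{|\mathcal{X}}(\hatTmdZ\le x)-\Pb(\TmdZ\le x)\bigr| \le C\,\Delta^{1/3}\bigl(1\vee\log(p/\Delta)\bigr)^{2/3}.
\end{align*}
Since $\log p=O(\log m)$, the right-hand side is $o_{\Pb}(1)$ as soon as $\Delta$ decays at any polynomial rate in $m$. Assumption~\ref{assump:Bootstrap} supplies $\max_{h,i}|\hat\gamma_{h,i}-\gamma_{h,i}|=O_\Pb(m^{-\delta_\gamma})$, while the uniform lower bound \ref{assump:TD2} on $\sigma_h$ combined with Assumption~\ref{assump:varEstimator} ensures $\hat\sigma_h\ge c_\sigma/2$ uniformly in $h\in\{1,\dots,d\}$ on a high-probability event. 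Writing $\hat\rho_{h,i}-\rho_{h,i}=(\hat\gamma_{h,i}-\gamma_{h,i})/(\hat\sigma_h\hat\sigma_i) + \gamma_{h,i}\bigl\{1/(\hat\sigma_h\hat\sigma_i) - 1/(\sigma_h\sigma_i)\bigr\}$ then propagates the polynomial rate uniformly to the correlations, yielding $\Delta=O_\Pb(m^{-\delta})$ for some $\delta>0$. This closes the argument.

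The main obstacle I foresee is the Gaussian comparison step: one must keep careful track of the implicit dimension $p$ entering the $(\log p)^{2/3}$ factor (to ensure that the polynomial decay of $\Delta^{1/3}$ dominates it) and lift the consistency of $\hat\gamma_{h,i}$ supplied by Assumption~\ref{assump:Bootstrap} to a \emph{uniform} rate for $\hat\rho_{h,i}-\rho_{h,i}$ over all $d^2$ pairs, which is precisely where the uniform lower bound on the long-run variances enters crucially.
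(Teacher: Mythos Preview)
Your overall architecture---triangle inequality with the Gumbel cdf $F_G$ as pivot, Polya's theorem on both sides, and a Gaussian comparison between $\hatTmdZ$ and $\TmdZ$ driven by the uniform consistency of $\hat\rho_{h,i}$---is exactly what the paper does. The correlation bound you write down is also correct and matches the paper's Lemma on $\max_{h,i}|\hat\rho_{h,i}-\rho_{h,i}|$.

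There is, however, a real gap at the Gaussian comparison step. The inequality you cite (Lemma~3.1 in \cite{Chernozhukov2013}) requires a \emph{uniform positive lower bound} on the diagonal entries of the covariance matrix, and the constant $C$ in the conclusion depends on that lower bound. For the full index set $\{(h,j,k):1\le k\le Tm,\ 0\le j\le k-1\}$ the component corresponding to $k-j=1$ has variance of order $1/m$: indeed
\begin{align*}
\Var\Big(\tfrac{(k-j)w(k/m)}{\sqrt{m}}\big(z_{m+j+1}^{m+k}(h)-z_1^{m+j}(h)\big)\Big)
=\tfrac{(k-j)w(k/m)^2}{m}+\tfrac{(k-j)^2w(k/m)^2}{m(m+j)}\,,
\end{align*}
which is $\asymp 1/m$ when $k-j=1$. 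Hence no fixed $c_1>0$ works, the constant $C$ blows up, and the bound $C\Delta^{1/3}(1\vee\log(p/\Delta))^{2/3}$ is vacuous. The paper deals with this by first passing to the \emph{truncated} statistics $\hatTmdZ(t_0)$ and $\TmdZ(t_0)$ (index restricted to $k-j>t_0m$), on which the variance is bounded below by $t_0^2/(T+1)^4+t_0/(T+1)^2$; the truncation error for $\hatTmdZ$ is controlled by a separate argument mirroring the one for $\Tmd$. Only after truncation is the comparison inequality applied. You need to insert this truncation step before invoking the comparison.

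One minor point: Theorem~\ref{thm:Bootstrap} does not assume Assumption~\ref{assump:varEstimator}; the uniform control of $\hat\sigma_h-\sigma_h$ you use already follows from Assumption~\ref{assump:Bootstrap} via $\hat\sigma_h^2=\hat\gamma_{h,h}$ and the lower bound \eqref{assump:TD2}.
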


\noindent 
Combining Theorem \ref{thm:Bootstrap} with Corollary \ref{cor:Gumbel} and Theorem \ref{thm:alternative}, it follows that the use of the quantiles of the bootstrap distribution in \eqref{hol1} yields a consistent monitoring scheme, which keeps its pre-specified nominal level.

\begin{remark} {\rm ~\\
(1) Note that the bootstrap procedure uses correlation estimates from the stable sample, but does not use these estimates for the aggregation of the componentwise detectors in the monitoring scheme.
\\
(2) An extension of methodology is provided in Appendix \ref{sec:diffchanges} in the online supplement.
We consider possible non-simultaneous change points in different components and in Theorem \ref{thm:algorithmConsistent} we analyze an algorithm which can identify all components that are affected by a change.
}
\end{remark}

\section{Finite sample properties}\label{sec:finitesamples}

In this section, we investigate the finite sample properties of the new monitoring schemes by means of a simulation study and illustrate potential applications in a data example.

\subsection{Simulation study} \label{sec41} 
In our simulation study we consider the following models:
\begin{enumerate}
\item[(M1)] $X_{t,h} = \theta_{t,h}$~,~~~~~~~~~~~~~~~~~~~~~~~~~~~~~~~~~ 
(M2)~$X_{t,h} = 0.1X_{t-1,h} + \varepsilon_{t,h}$~,
\item[(M3)] $X_{t,h} = \eta_{t,h} + 0.3\eta_{t-1,h} -0.1\eta_{t-2,h}$~,~~~~
(M4)~$X_{t,h} = \tilde{\varepsilon}_{t,h}$~,
\end{enumerate}
where $\{\theta_{t,h}\}_{t \in \N, h \in \N}$ is an array of i.i.d. random variables that follow a $\chi^2_{10}/\sqrt{20}$-distribution, $\{\varepsilon_{t,h}\}_{t \in \N, h \in \N}$ is an array of i.i.d. standard Gaussian random variables,
$\{\eta_{t,h}\}_{t \in \N, h \in \N}$ is an array of i.i.d. random variables that follow a Laplace(0,1)-distribution and $\{\tilde{\varepsilon}_{t,h}\}_{t \in \N, h \in \N}$ are random variables, such that $\big\{\tilde{\bm{\varepsilon}}_t=(\tilde{\varepsilon}_{t,1},\dots, \tilde{\varepsilon}_{t,d})^\top \big\}_{t \in \Z}$ are i.i.d. $d$-dimensional, centered Gaussian random vectors with covariance structure
$\Cov(\tilde{\varepsilon}_{1,j}, \tilde{\varepsilon}_{1,i})= 1/(|j-i|+1)$.
For the alternative hypothesis, we also consider the models (M1)-(M4) and add a shift in the mean, at some point $m+k^*$, that is 
\begin{align}\label{eq:addChangesArt}
X_{t,h}^{(\delta ,A)} =
\begin{cases}
X_{t,h} & \text{if }~ t < m + k^*~, \\
X_{t,h} + \delta\cdot I\{h \in A\} & \text{if }~ t \geq m + k^*~,
\end{cases}
\end{align}
where $I$ denotes the indicator function, $A$ is the set of spatial components affected by the change and $\delta$ is the size of the change.
In order to examine the influence of both parameters on the procedure, we will consider different values of $\delta$ and three different choices 
of the set $A$ below.

For the long-run variance estimation, we use the quadratic spectral kernel estimator [see \cite{Andrews1991}] in each component, that is
\begin{align}\label{eq:varEstimator}
\hat{\sigma}_h^2 
:= \sum_{|t| \leq m-1} k\Big( \dfrac{t}{H_m}\Big)\hat{\phi}_{t,h}~,
\end{align}
with the empirical auto-covariances $\hat{\phi}_{t,h}
:= m^{-1}\sum_{i=t+1}^{m}\big(X_{i,h} - \hatmu_1^{m}(h)\big)\big(X_{i-t,h} - \hatmu_1^{m}(h)\big)$
and underlying kernel 
\begin{align*}
k(x) := \dfrac{25}{12\pi^2x^2}\bigg( \dfrac{\sin(6\pi x/5)}{6\pi x/5} - \cos(6\pi x/5)\bigg)~.
\end{align*}
In particular, we employ the implementation of the estimator \eqref{eq:varEstimator} provided by the R-package 'sandwich' [see \cite{Zeileis2004}] and select the bandwidth parameter as $H_m=\log_{10}(m)$.
Note that we only use the stable set $\bm{X}_1,\dots,\bm{X}_m$ for the estimation of the long-run variance, which avoids corruption from observations after the potential change point under the alternative [see the discussion in Remark \ref{rem:LRestimator}].
All results presented in this section are based on $1000$ simulation runs and for the AR(1)-process (M2) we employ a burn-in sample of 200 observations.
The test level is always fixed at $\alpha = 0.05$.
\begin{table}[t]
\centering
\begin{tabular}{cccccccccc}
\hline 
\vspace{2mm} 
&& \multicolumn{2}{c}{m=100} & \multicolumn{2}{c}{m=200} & \multicolumn{2}{c}{m=500} \\
T& model & d=100 & d=200 & d=200 & d=500 & d=200 & d=500\\[5pt]
\hline
\Tstrut
\multirow{4}{*}{1}
&(M1) & 10.4\% & 13.7\% &  6.7\% & 10.2\% &  4.5\% &  6.1\%\\
&(M2) &  7.4\% & 11.3\% &  8.0\% &  9.4\% &  4.1\% &  4.9\%\\
&(M3) &  5.8\% &  7.6\% &  4.2\% &  7.4\% &  3.6\% &  3.2\%\\
&(M4) &  4.2\% &  7.3\% &  4.1\% &  5.6\% &  2.8\% &  3.8\%\\[5pt]
\hline
\Tstrut
\multirow{4}{*}{2}
&(M1) &  9.9\% & 12.9\% &  7.7\% & 10.5\% &  3.8\% &  6.6\%\\
&(M2) &  9.0\% & 11.2\% &  7.5\% &  9.2\% &  5.0\% &  4.4\%\\
&(M3) &  5.7\% &  7.4\% &  4.6\% &  5.1\% &  2.3\% &  2.8\%\\
&(M4) &  4.7\% &  7.3\% &  5.2\% &  7.0\% &  3.0\% &  3.7\%\\[5pt]
\hline
\Tstrut
\multirow{4}{*}{4}
&(M1) & 10.9\% & 16.6\% &  8.9\% & 11.1\% &  5.0\% &  5.5\%\\
&(M2) &  9.1\% & 12.6\% &  7.7\% &  9.5\% &  3.8\% &  5.4\%\\
&(M3) &  6.0\% &  8.4\% &  4.6\% &  5.6\% &  2.4\% &  2.9\%\\
&(M4) &  6.3\% &  7.2\% &  5.2\% &  5.8\% &  2.7\% &  5.5\%\\[5pt]
\hline
\end{tabular}
\caption{\it Approximation of the nominal level by the detector defined in \eqref{hol1} 
for different choices of initial sample size $m$, dimension $d$ and monitoring duration $m\cdot T$.
Critical values are obtained from Corollary \ref{cor:Gumbel} (approximation by Gumbel distribution).
The nominal level is $\alpha=0.05$.
\label{tab:nullapproxGumbel}}
\end{table}

In Table \ref{tab:nullapproxGumbel} and \ref{tab:nullapproxGaussianQuantiles}, we illustrate the finite sample properties of the detection scheme \eqref{hol1} under the null hypothesis for different choices of the sample size $m$, the dimension $d$ and the length of the monitoring period determined by $T$.
The results in Table \ref{tab:nullapproxGumbel} are based on the weak convergence in Corollary \ref{cor:Gumbel} and therefore we use the critical value $c_{d,\alpha} = g_{1 - \alpha}/a_d+b_d$ in \eqref{hol1}, where $g_{1 - \alpha}$ is the quantile of the Gumbel distribution.

The results in Table \ref{tab:nullapproxGaussianQuantiles} are obtained by the bootstrap procedure as described in Step 2 of Algorithm \ref{alg:complete} in the online supplement.
We note that the use of (any) sequential monitoring scheme in a simulation study is computationally demanding, in particular for a high dimensional setup.
In our case, we have to estimate the spatial correlation structure in each simulation run and then simulate the quantile of the distribution of the statistic $\hatTmdZ$ defined in \eqref{def:BSstatistic}.
Of course, this is no problem in data analysis as in this case the monitoring procedure has only to be conducted once, but it requires large computational resources in a simulation, where the same procedure is repeated $1000$ times.
Therefore, in order to reduce the computational complexity of the bootstrap approach in the simulation study, we do not estimate the spatial correlation structure for the bootstrap but employ temporal and spatial independent Gaussian random variables to generate the bootstrap statistics defined in \eqref{def:BSstatistic}.
With this adaption, the quantiles are fixed within each column of Table \ref{tab:nullapproxGaussianQuantiles}, which makes the simulation study practicable.
Moreover, it can easily be seen from the theory developed in Section \ref{sec3} that the use of these quantiles also yields a consistent test in \eqref{hol1}.
Here we point to Lemma \ref{lem:gumbel} in Appendix \ref{sec:technicalDetails} in the online supplement.

In Table \ref{tab:nullapproxGumbel}, we observe a reasonable approximation of the nominal level by the asymptotic test in many cases, which becomes more accurate with larger initial sample size $m$ and dimension $d$.
For instance, consider the model (M2) for the choice $T=2$, where we have obtained a type I error of $9.0\%$ for $m=d=100$ and $7.5\%$ for $m=d=200$.
This finally reduces to an appropriate approximation of $4.4\%$ for the choice $m=d=500$.
As common for high dimensional procedures, the relation of sample size $m$ and dimension $d$ has a severe impact on the performance of the monitoring procedure.
For example, an empirical type I error of $4.2\%$ was measured for model (M4) with $m=d=100$ and $T=1$, which increases to $7.3\%$ if the dimension is set to $d=200$.
This effect becomes weaker, when the sample size is generally increased.

In Table \ref{tab:nullapproxGaussianQuantiles}, we display the type I error for the method where the quantiles are calculated by the bootstrap as described above.
For the sake of brevity, we focus on the case $T=1$, as the results obtained for different choices of $T$ in Table \ref{tab:nullapproxGumbel} are similar.
We observe a very reasonable approximation of the desired test level and - compared to the results in Table \ref{tab:nullapproxGumbel} - some improvement by the bootstrap procedure for small sample sizes.
In particular, application of the bootstrap is recommended in cases where $m$ and $d$ are relatively small, where the approximation of the nominal level using the quantiles from the Gumbel distribution is rather imprecise, while the extra computational costs for the bootstrap are still tolerable.

\begin{table}[H]
\centering
\begin{tabular}{cccccccccc}
\hline 
\vspace{2mm} 
&& \multicolumn{2}{c}{m=100} & \multicolumn{2}{c}{m=200} & \multicolumn{2}{c}{m=500}\\
T& model & d=100 & d=200 & d=200 & d=500 & d=200 & d=500\\[5pt]
\hline
\Tstrut
\multirow{4}{*}{1}
&(M1) &  9.7\% & 13.1\% &  7.3\% &  8.5\% & {9.8\%} &  {8.4\%}\\
&(M2) &  6.9\% & 11.1\% &  8.1\% &  7.0\% & {8.1\%} &  {6.8\%}\\
&(M3) &  5.2\% &  7.5\% &  4.2\% &  5.6\% &  6.4\% &  4.7\%\\
&(M4) &  4.0\% &  7.0\% &  4.1\% &  4.9\% &  6.4\% &  5.5\%\\[5pt]
\end{tabular}
\caption{\it Approximation of the nominal level using the detector defined in \eqref{hol1} for different choices of initial sample size $m$ and dimension $d$. 
Critical values are computed by the bootstrap with spatial independence.
The nominal level is $\alpha=0.05$.
\label{tab:nullapproxGaussianQuantiles}}
\end{table}

\begin{figure}[H]
\centering
\begin{tabular}{b{1.2cm}cc}
(A1) \tabj\tabj &
\includegraphics[width=6cm, height=4cm]{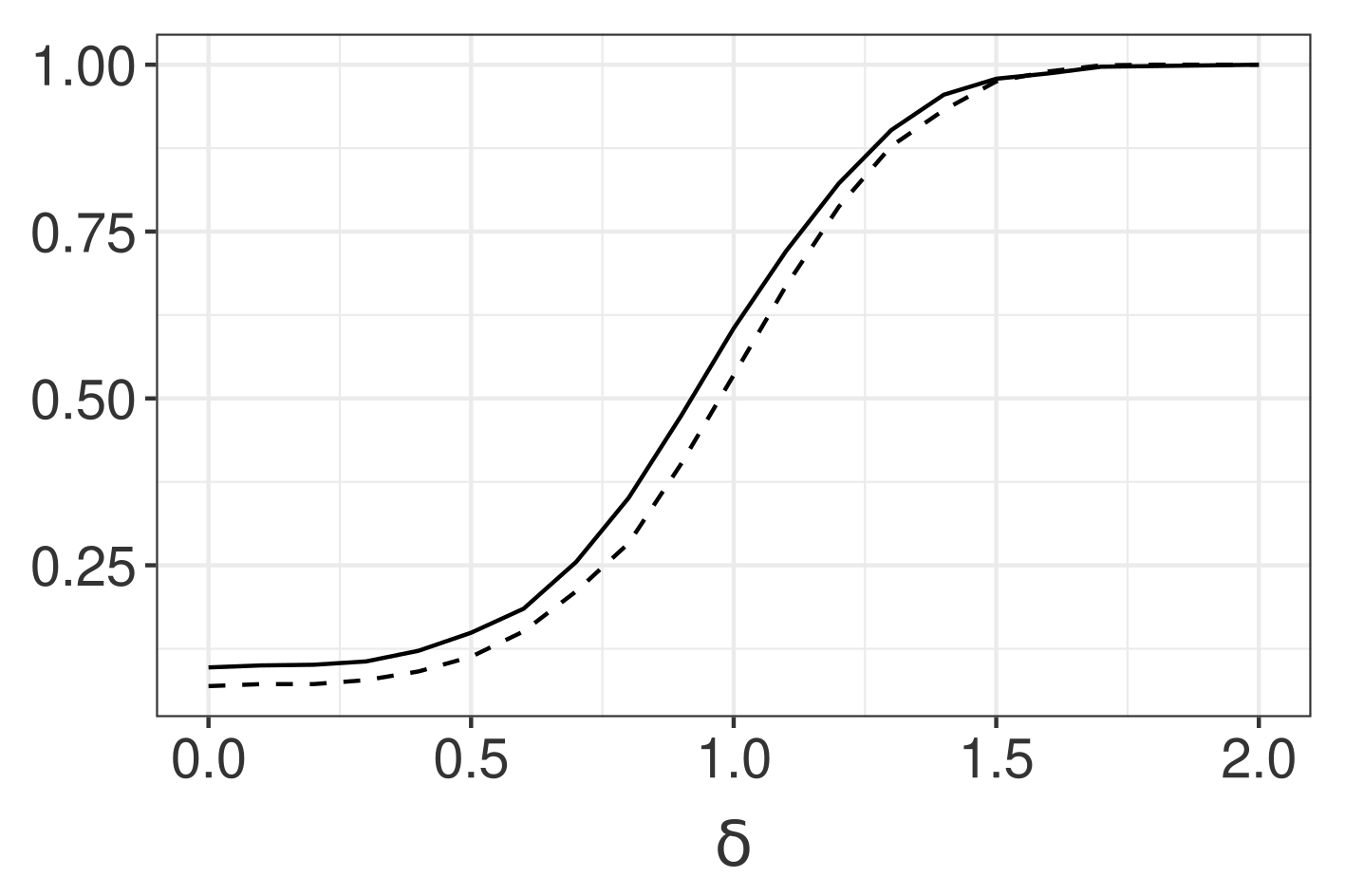} &
\includegraphics[width=6cm, height=4cm]{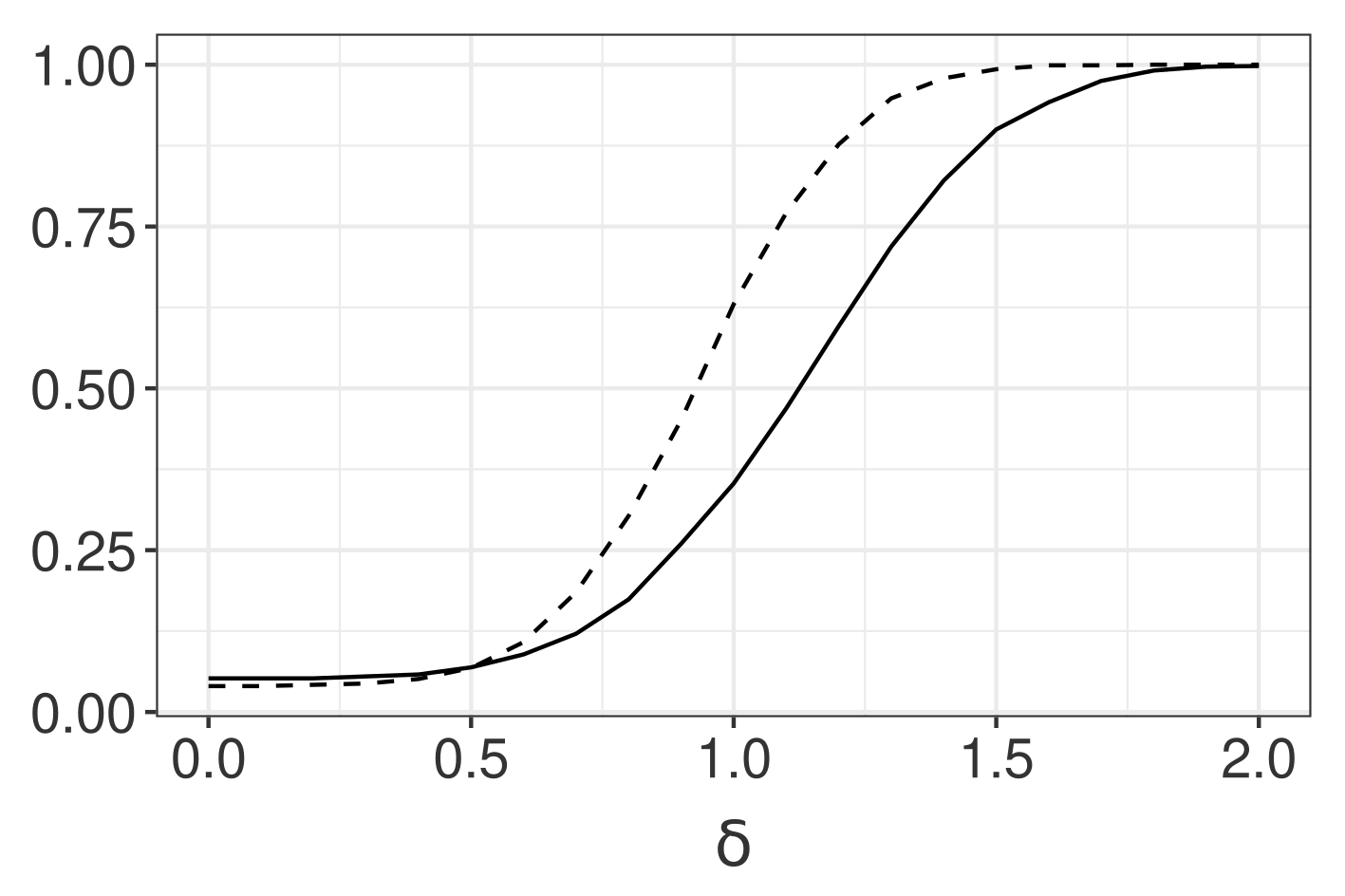} \\
(A2) \tabj\tabj &
\includegraphics[width=6cm, height=4cm]{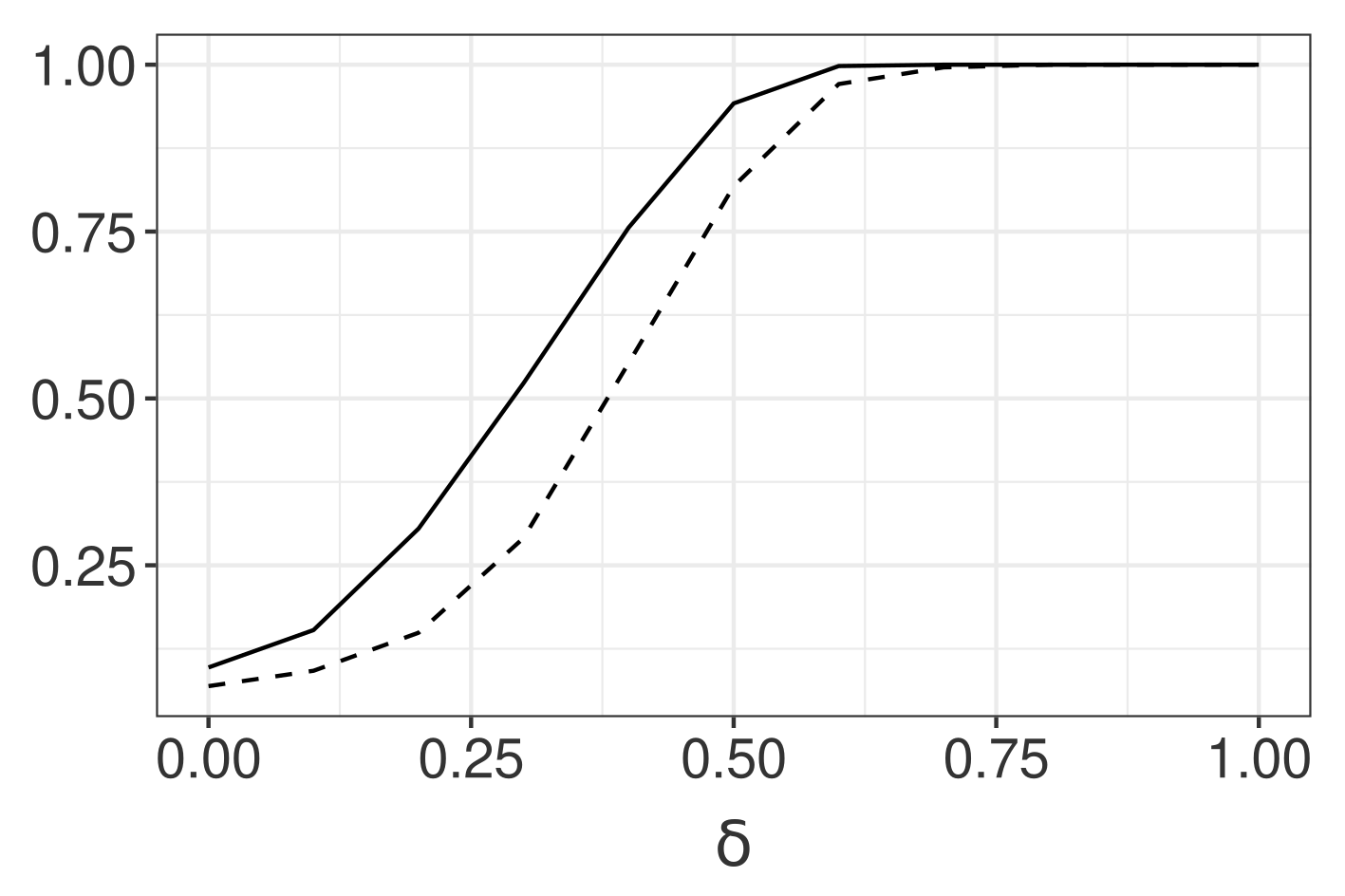} &
\includegraphics[width=6cm, height=4cm]{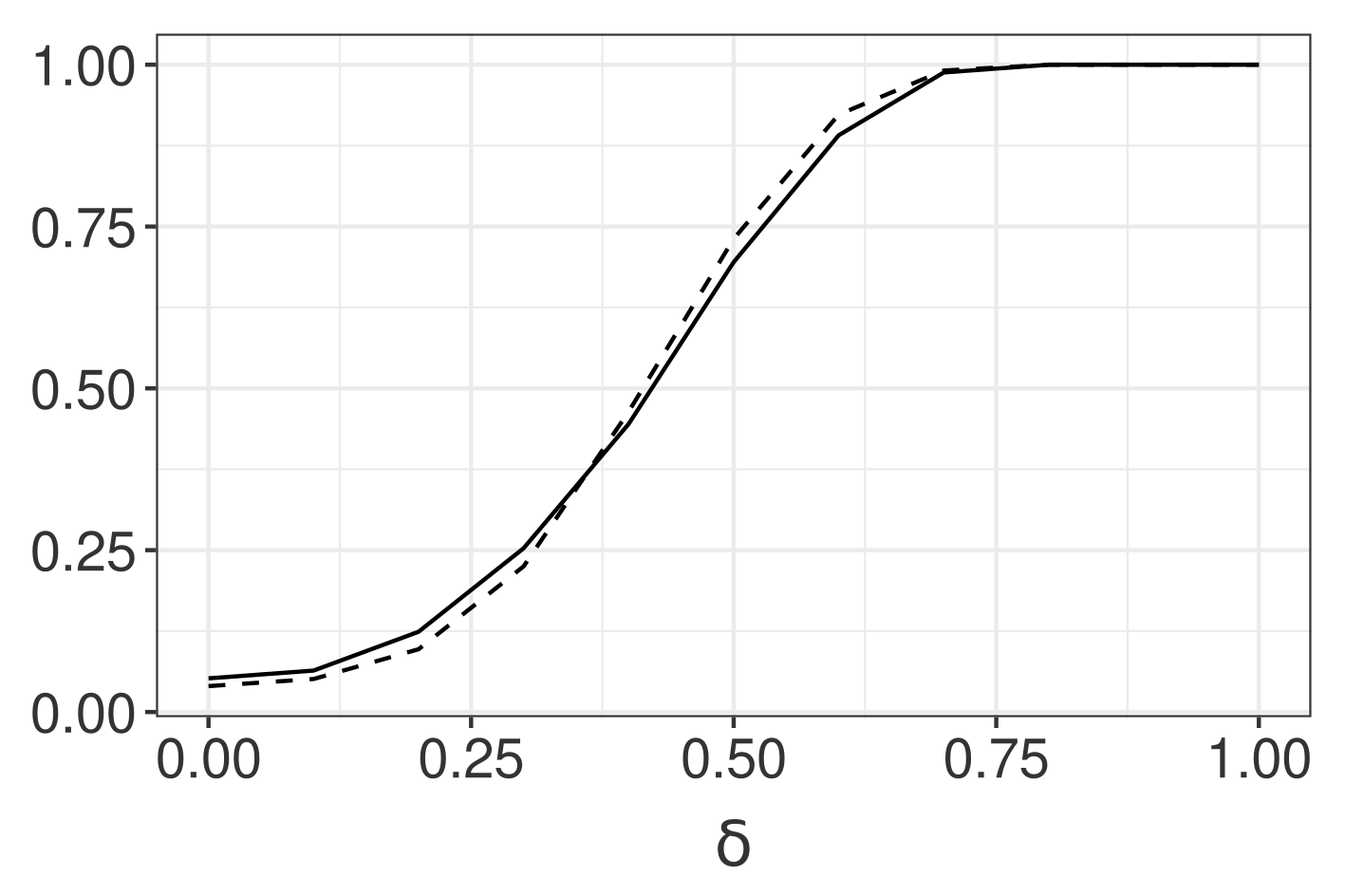} \\
(A3) \tabj\tabj &
\includegraphics[width=6cm, height=4cm]{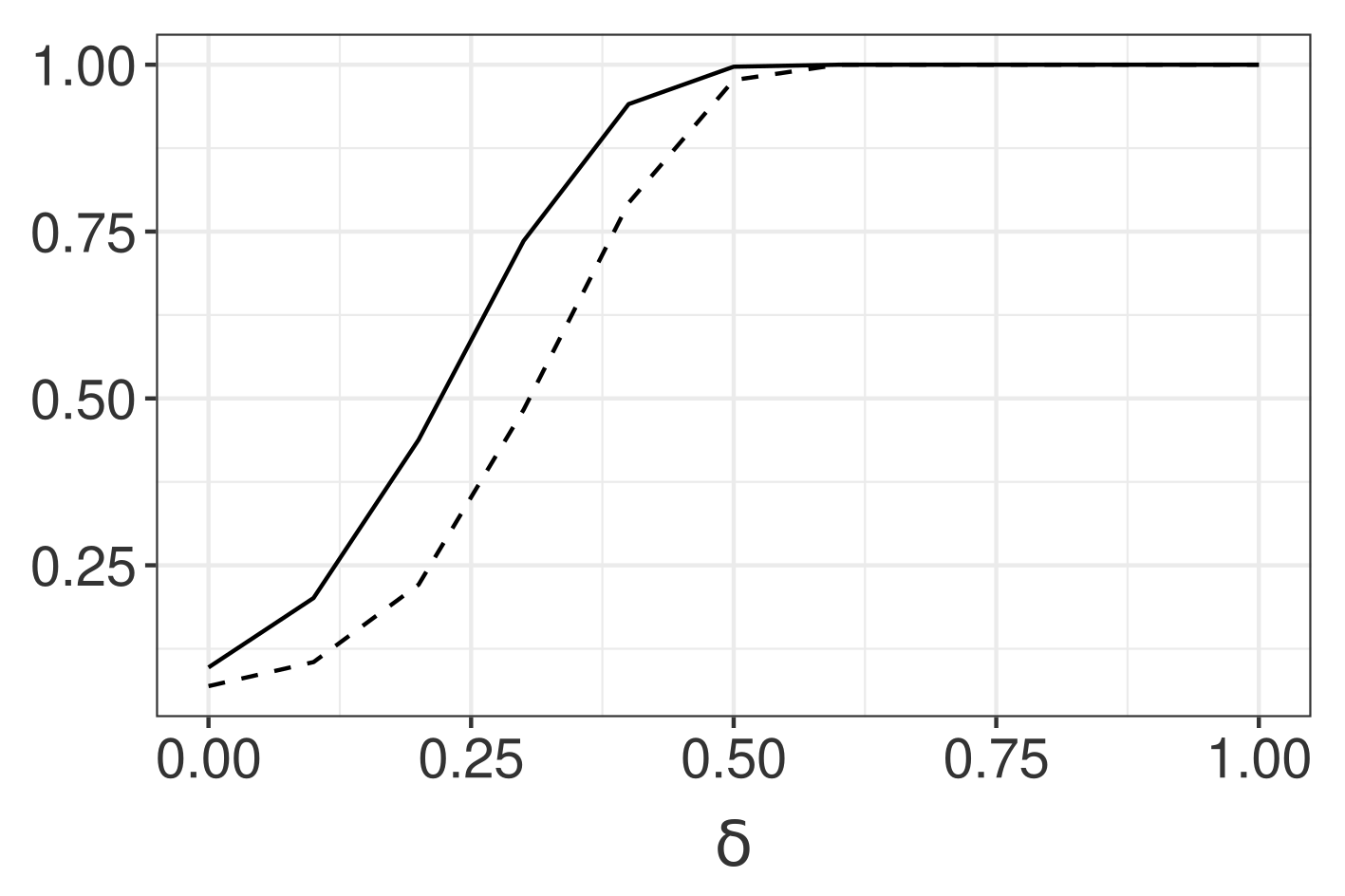} &
\includegraphics[width=6cm, height=4cm]{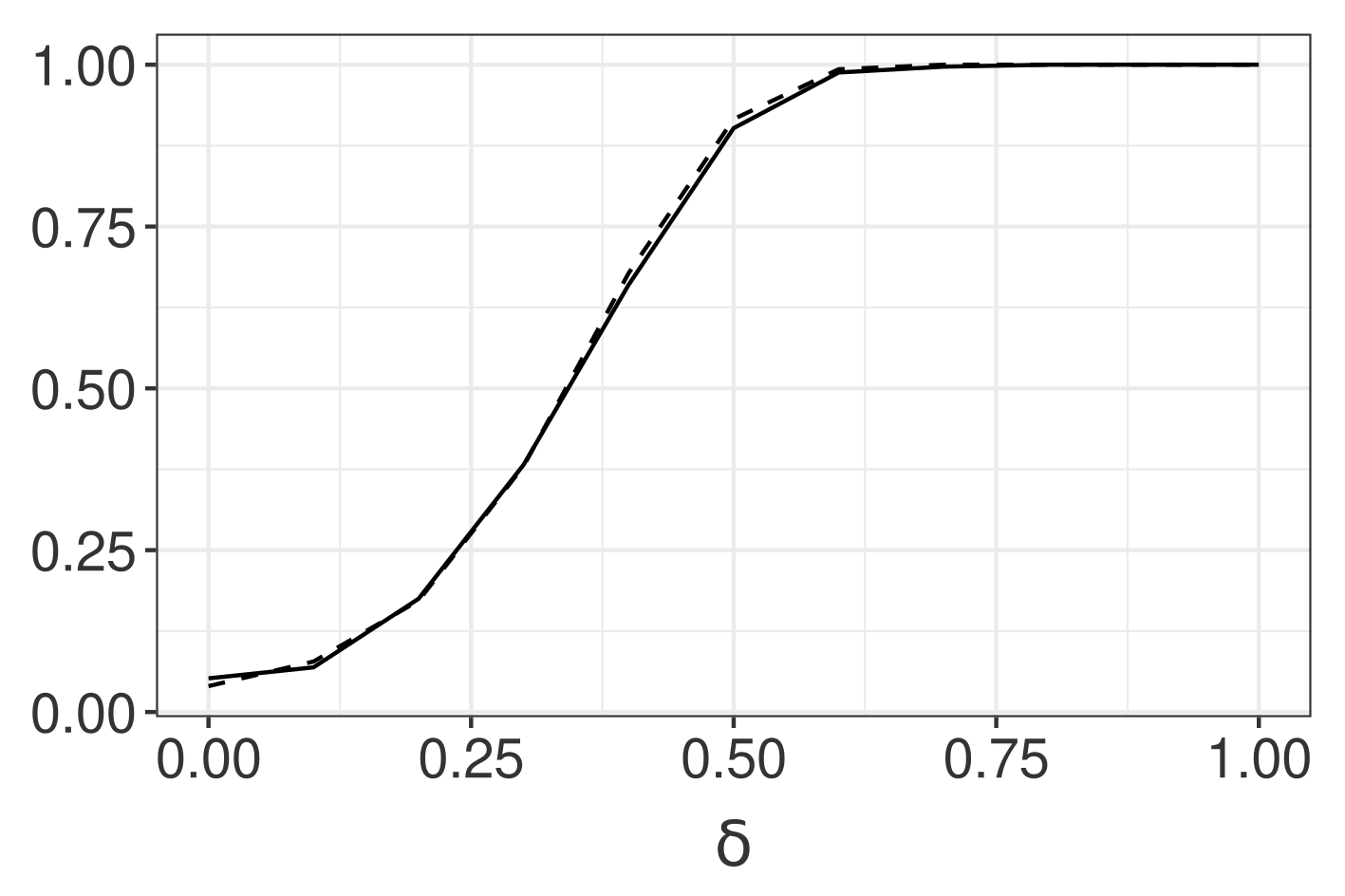}
\end{tabular}
\caption{\it
Simulated power of the monitoring scheme \eqref{hol1} for different size $\delta$ of the change, sample size $m=100$ and dimension $d=100$.
Left panels: Solid line (M1), dashed line (M2).
Right panels: Solid line (M3), dashed line (M4).
The nominal level is $\alpha=0.05$.
\label{fig:alt1}}
\end{figure}

\noindent To analyze the performance of the sequential procedure under the alternative hypothesis we consider the model \eqref{eq:addChangesArt}, where the processes $X_{t,h}$ are defined by (M1)-(M4).
We distinguish between the following three scenarios:
\begin{enumerate}[label=(A\arabic*), leftmargin=.948cm]
\item The change occurs only in one component. This corresponds to the choice $A=\{1\}$.
\item The change occurs 
in $50\%$ of the components, i.e. $A=\{1,\dots,d/2\}$.
\item The change occurs 
in all components, i.e. $A=\{1,\dots,d\}$.
\end{enumerate}
For the sake of brevity and readability, we focus on the case $T=1$ under the alternative and only consider change positions in the middle of the monitoring period, i.e. we fix $k^* = m/2$.
In Figures \ref{fig:alt1} and \ref{fig:alt2}, we display the rejection probabilities of the detection rule \eqref{hol1} for these scenarios, different values of the change, different sample sizes and dimensions.
The critical values in \eqref{hol1} are given by $c_{d,\alpha} = q/a_d + b_d$, where $q$ is the quantile obtained by the spatial independent bootstrap, which appears more accurate than the quantiles derived from the Gumbel distribution.

\noindent The results can be summarized as follows.
In all considered scenarios the new monitoring procedure \eqref{hol1} for a change in the high dimensional mean vector has reasonable power under the alternative, and in all cases the type II error approaches zero for an increasing size $\delta$ of the change.
As expected, the power is lower under alternative (A1), where the change occurs in only one coordinate.
To give an example, consider model (M1) and (M2) corresponding to the left columns in Figures \ref{fig:alt1} and \ref{fig:alt2}.
The results for the different alternatives (A1), (A2) and (A3) can be found in the first, second and third rows of the figures, respectively.
If the sample size and dimension are given by $m=100$ and $d=100$ we observe from Figure \ref{fig:alt1} that for $\delta=0.7$ the power for model (M1) and (M2) under alternative (A1) is approximately given by $0.25$, while it is close to one under alternative (A2).
Interestingly, the differences between alternatives (A2) and (A3) are not so strong, but they are still clearly visible.
For instance, a comparison between the left parts of the second and third row of Figure \ref{fig:alt2} shows that the power of the detection scheme \eqref{hol1} in model (M1) for $\delta=0.3$ is approximately $0.88$ for alternative (A2) and $0.99$ for alternative (A3).

The differences between the four data generating models are in general not substantial with one exception.
In model (M3) under alternative (A1) the power of the detection scheme \eqref{hol1} is considerably smaller [see the first rows in Figure \ref{fig:alt1} and Figure \ref{fig:alt2}].

We summarize the discussion of the finite sample properties emphasizing that our numerical results have supported the theoretical findings developed in Section \ref{sec3} in all cases under consideration.

\begin{figure}[H]
\centering
\begin{tabular}{b{1.2cm}cc}
(A1) \tabj\tabj &
\includegraphics[width=6cm, height=4cm]{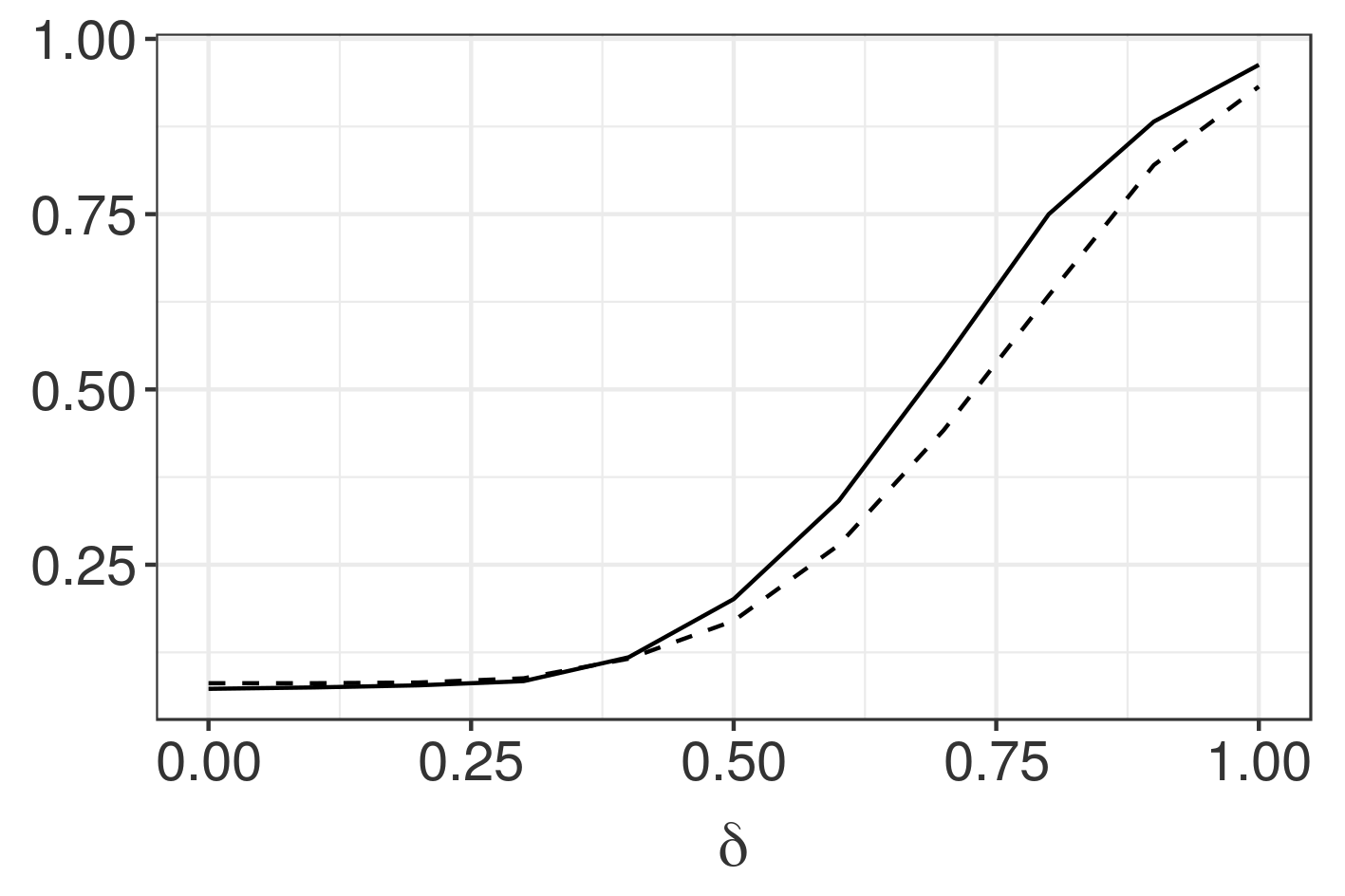} &
\includegraphics[width=6cm, height=4cm]{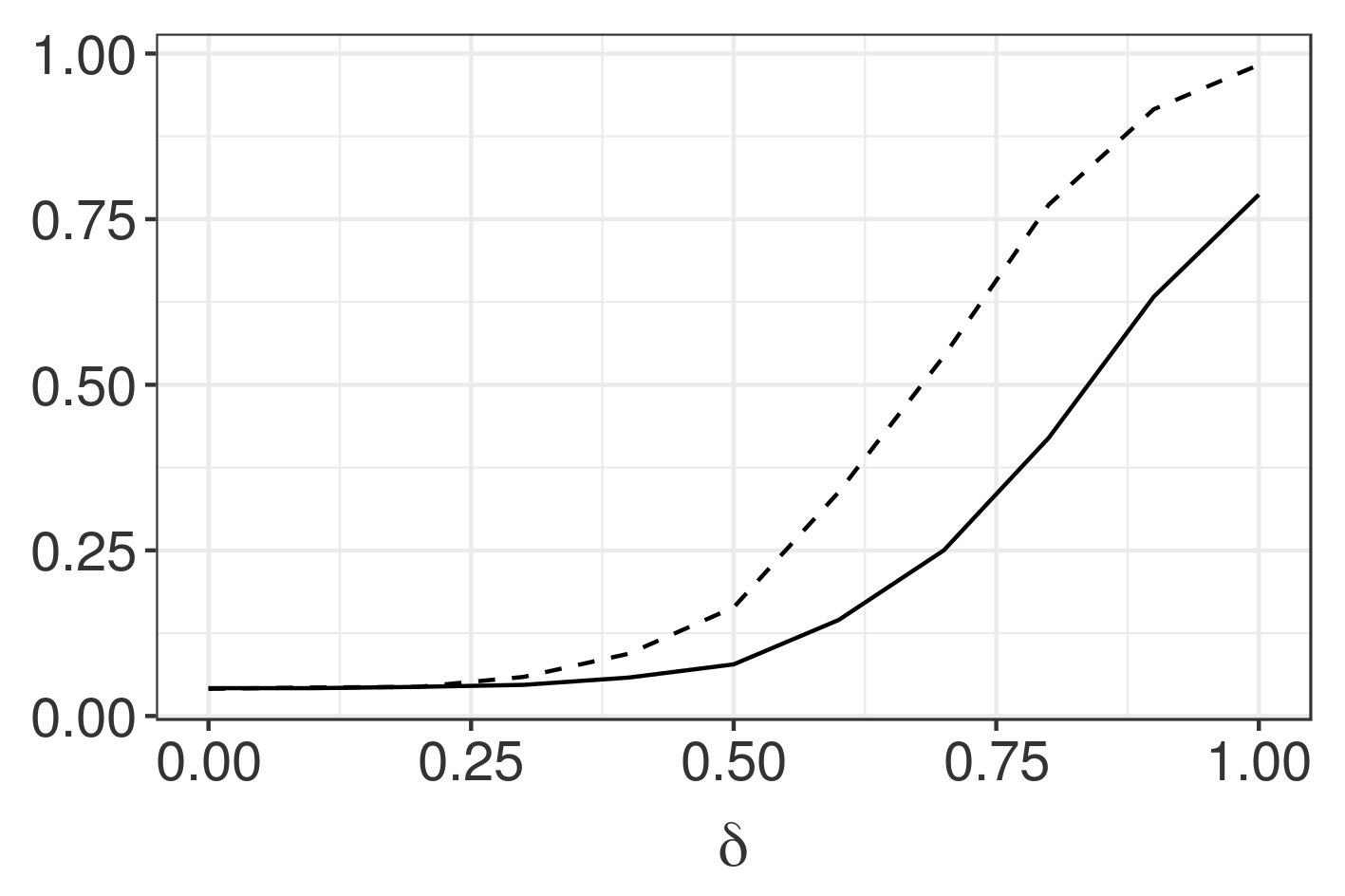}
\\
(A2) \tabj\tabj &
\includegraphics[width=6cm, height=4cm]{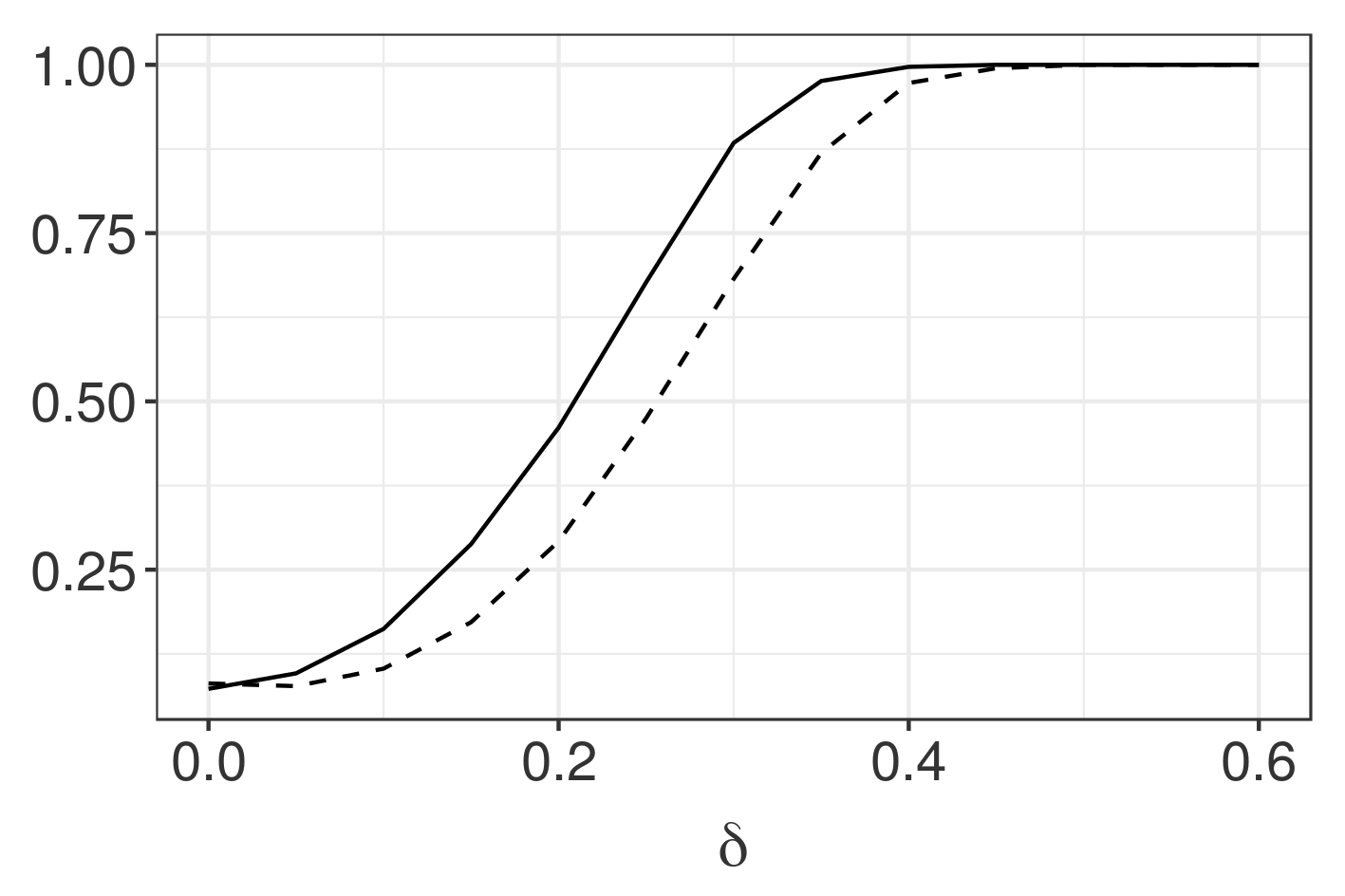} &
\includegraphics[width=6cm, height=4cm]{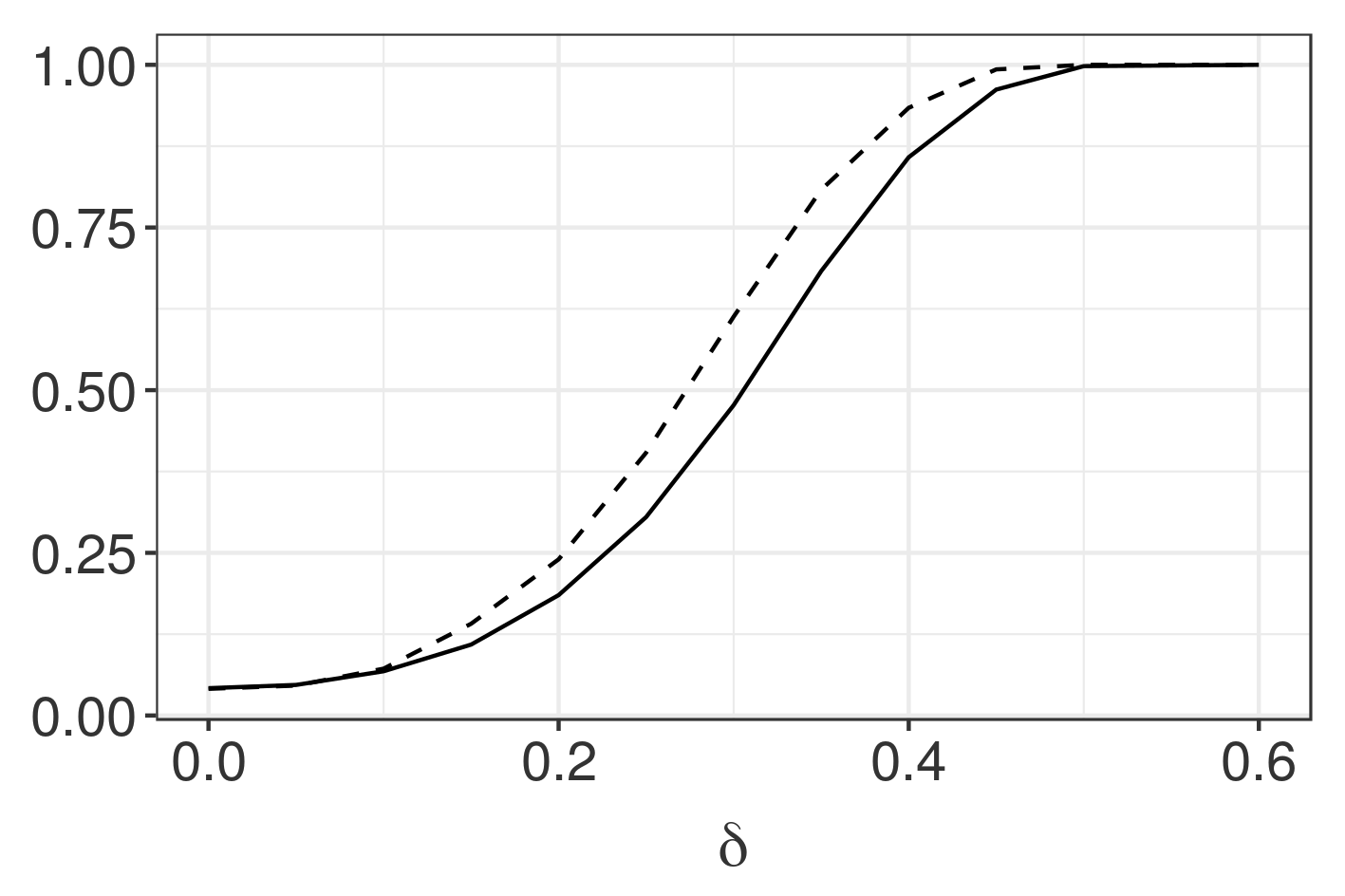}
\\
(A3) \tabj\tabj &
\includegraphics[width=6cm, height=4cm]{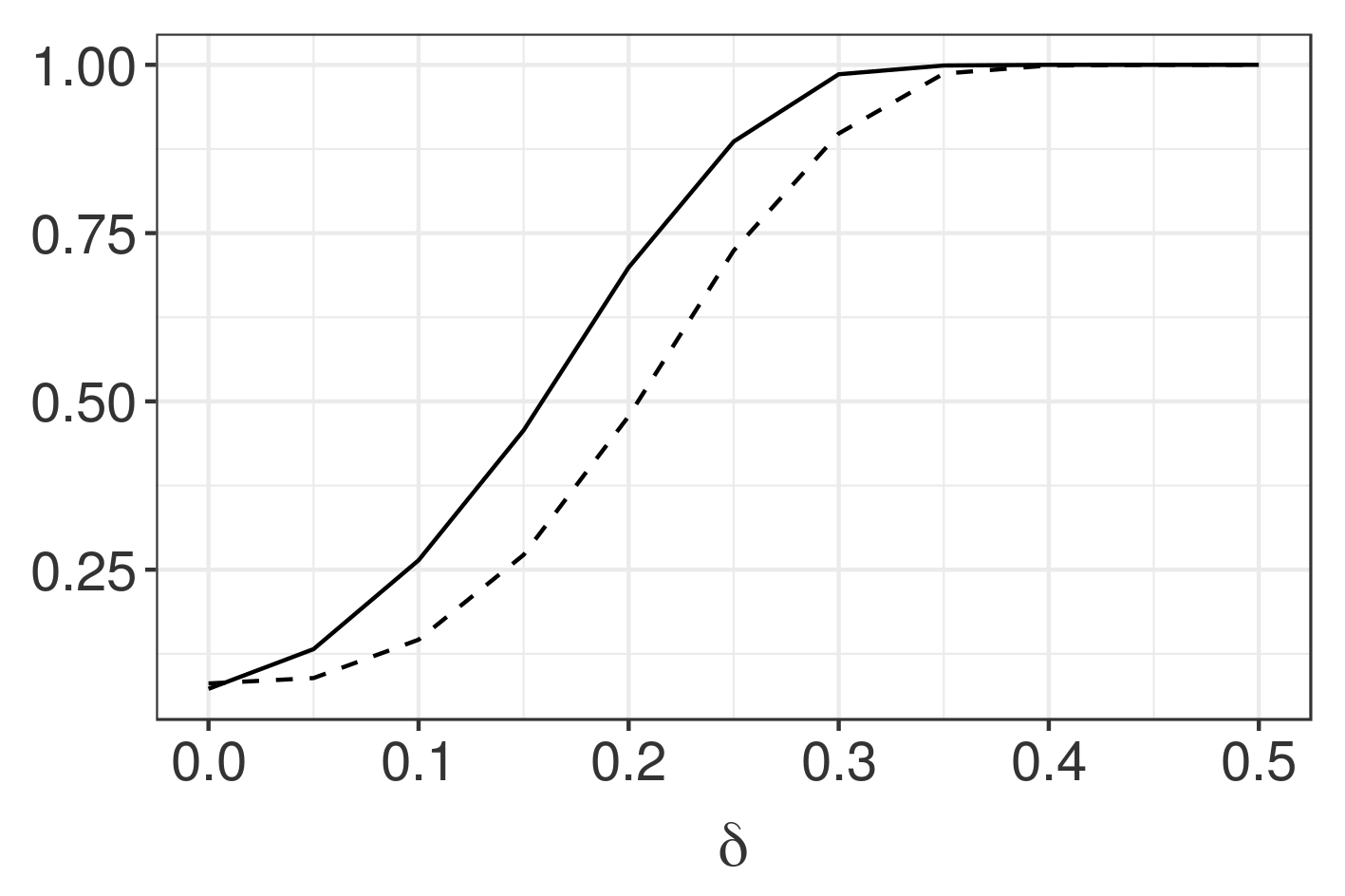} &
\includegraphics[width=6cm, height=4cm]{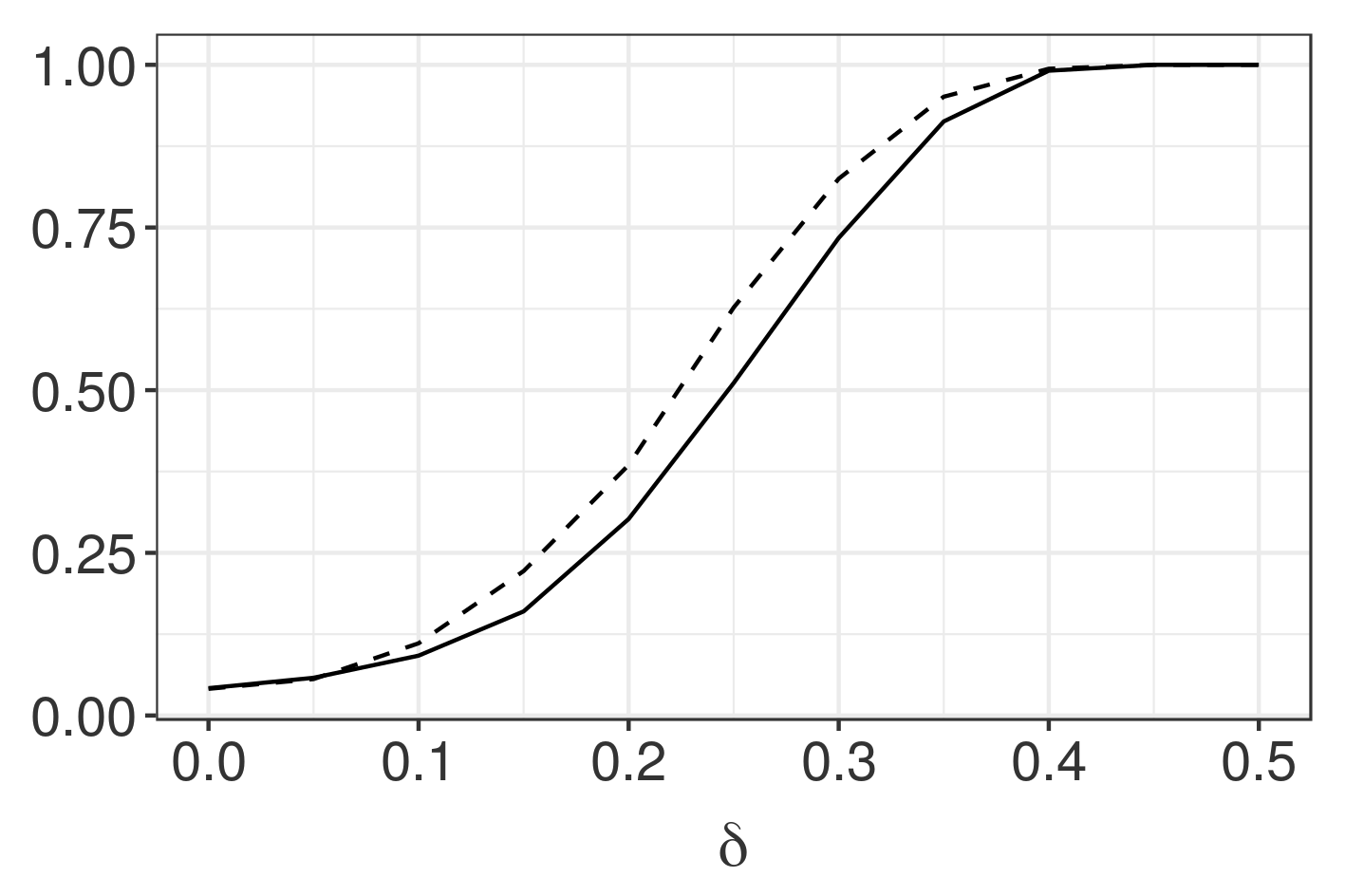}
\end{tabular}
\caption{\it
\it Simulated power of the monitoring scheme \eqref{hol1} for different size $\delta$ of the change, sample size $m=200$ and dimension $d=200$.
Left panels: Solid line (M1), dashed line (M2)
Right panels: Solid line (M3), dashed line (M4).
The nominal level is $\alpha=0.05$.
\label{fig:alt2}}
\end{figure}

\subsection{Data example}\label{sec:dataExample}
In this section, we illustrate potential applications of the new monitoring scheme 
in a data example.
For this purpose, we consider a data set sampled in hydrology, which consists of the average daily flows measured in $m^3$/sec of the river Chemnitz at G\"oritzhain in Saxony, Germany, for the years 1909-2013.

This data has been previously analyzed in the (retrospective) change point literature by \cite{Sharipov2016}, who developed methodology for detecting change points in functional data.
The data set consists of a sample of $n=105$ observations with dimension $d = 365$, such that each vector $\bm{X}_t=(X_{t,1},\dots,X_{t,365})^\top$ contains the daily average flows of one (German) hydrological year, which lasts from 1st of November to 31st of October.
For instance, the data point $X_{1,1}$ represents the daily average flow of the 1st of November 1909, while $X_{105,365}$ is the same key figure for the 31st of October 2014.
By this transformation, \cite{Sharipov2016} located a change in the annual flow curves in the year 1964.
\cite{Dette2018} propose a retrospective test for relevant changes in a high dimensional time series.
They consider the same data and locate $4$ different mean changes that exceed a test threshold of $0.63$ and are traced back to the dates 10th of July 1950, 18th of March 1956, 23rd of December 1965, 7th of February 1979, which correspond to spatial components $252$, $137$, $53$, $99$ , respectively.

Based on these prior analyses, we consider the first 35 observations as our initial stable data set and will use the remaining 70 observations as the monitoring period corresponding to a choice of $m=35$ and $T=2$.
>From the initial set $\bm{X}_1,\dots, \bm{X}_{35}$ the spatial correlation structure is estimated via the implementation of the estimator based on the quadratic spectral kernel provided in the R-package 'sandwich' [see \cite{Zeileis2004}].
By the bootstrap in Algorithm \ref{alg:complete} the quantiles are obtained as $q_{0.99}= 7.43$ and $q_{0.95}=4.93$, for which we conduct our monitoring method.
During the monitoring period, we proceed as described in Algorithm \ref{alg:complete}:
If the detection scheme rejects the null hypothesis of no change at a certain time point, we report the corresponding component(s) as unstable and remove it/them from the sample.
Afterwards, we continue monitoring with the remaining components until there is another rejection or the end of the monitoring period is attained.

The results of this procedure are displayed in Table \ref{tab:dataExample} for the test levels $\alpha=0.01$ and $\alpha=0.05$ and can be summarized as follows.
For a test level of $0.05$ more unstable components (33) are identified than for $0.01$ (17 components).
Naturally, all breaks identified by the lower test level, are also detected by the other one, while the time of detection is sometimes earlier in the latter case.
As the data exhibits (positive) spatial correlation, breaks partially occur in clusters, for example consider components 285, 286 and 287, for which both test levels detect changes or components 214, 215 and 216, for which changes are found at test level $0.05$.
Apparently, these \textit{clusters} have to be considered as one change affecting several components, for instance a flood event or a new seasonal effect.

It is worth mentioning, that our findings match three out of four unstable components identified by the threshold procedure of \cite{Dette2018}.
Namely, we refer to components 53, 99, 252, which are likewise identified to contain a break by our sequential analysis.
To illustrate the data set, we finally display the average daily flow over the years for these three components in Figure \ref{fig:dataExample}.
The plots indicate that the break in component 252 (10th of July) is most probably caused by a huge outlier in the hydrological year starting in November 1953, which leads to an immediate rejection.
This observation can be easily linked to a flood event in Saxony in summer of 1954 [see for instance \cite{Schroeter2013}].
For the components 53 (23rd of December) and 99 (7th of February), a visual inspection of the plots indicate \textit{actual} structural changes.

\begin{table}[H]
\centering
\begin{small}
\setlength{\tabcolsep}{5pt}
\begin{tabular}{cc|cc||cc|cc}
\multicolumn{4}{c||}{$\alpha=0.05$} & \multicolumn{4}{c}{$\alpha=0.01$}\\
\hline
component & year & component & year & component & year & component & year \\
\hline
101      & 1945 & 54, 209           & 1995 & 101      & 1945 & 280     & 2009\\
252, 253 & 1953 & 264               & 1996 & 252, 253 & 1953 & 54, 215 & 2012\\
249, 251 & 1954 & 285, 286, 287     & 2001 & 249      & 1957 & 209     & 2013\\
247      & 1957 & 92                & 2002 & 105      & 1960\\
105      & 1960 & 99,192            & 2003 & 189      & 1977\\
189, 191 & 1977 & 138               & 2004 & 191      & 1979\\
104      & 1979 & 280, 283          & 2009 & 100      & 1980\\
100, 190 & 1980 & 44                & 2010 & 104      & 1986\\
102      & 1986 & 55, 214, 215, 216 & 2012 & 53       & 1995\\
53, 57   & 1993 & 199               & 2013 & 285, 286, 287 & 2001
\end{tabular}
\end{small}
\caption{\it Structural breaks detected in the river flow data for a test level of $\alpha=0.05$ (left column) and $\alpha=0.01$ (right column).
The column 'year' specifies the (hydrological) year after which the rejection occurred.
For instance 1945 means that the data from the hyrodological year 01st of November 1945 to 31st of October 1946 was already under consideration.
\label{tab:dataExample}}
\end{table}
\begin{figure}
\centering
\includegraphics[width=12cm, height=4.5cm]{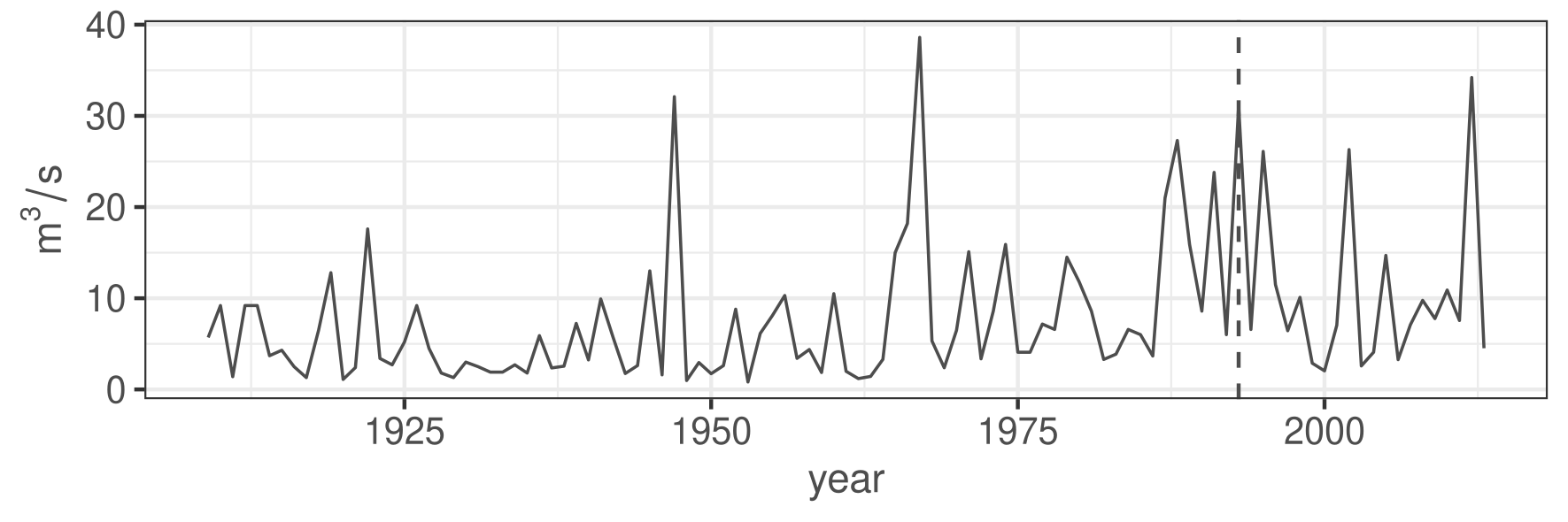}
\includegraphics[width=12cm, height=4.5cm]{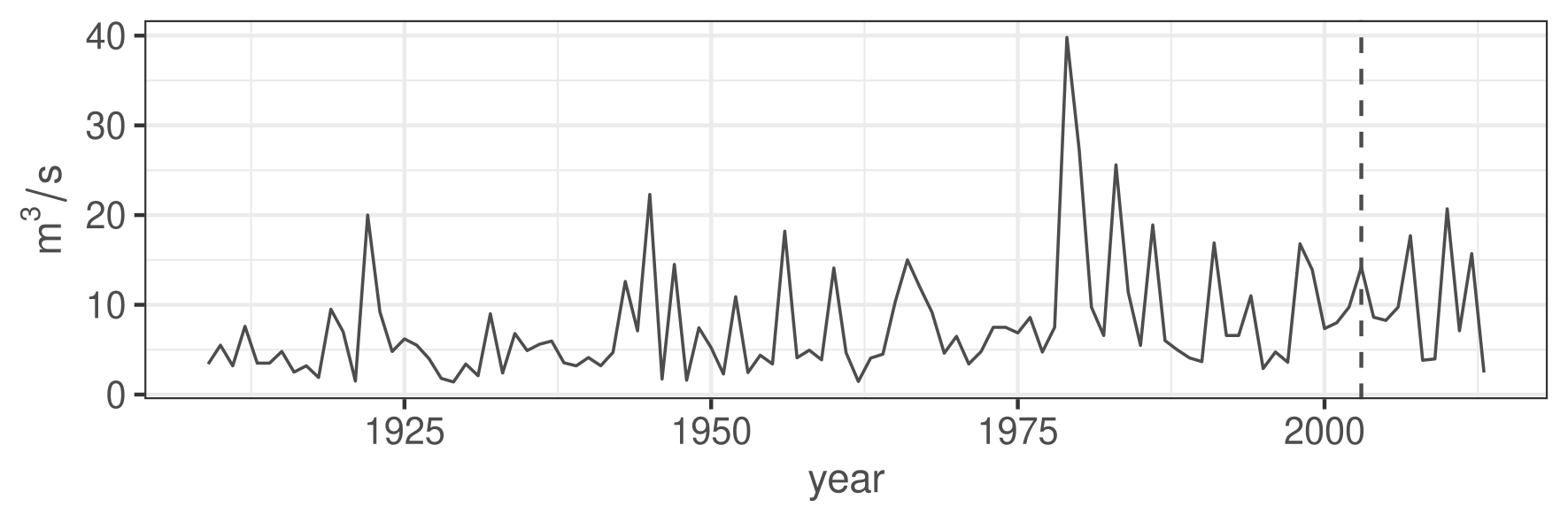}
\includegraphics[width=12cm, height=4.5cm]{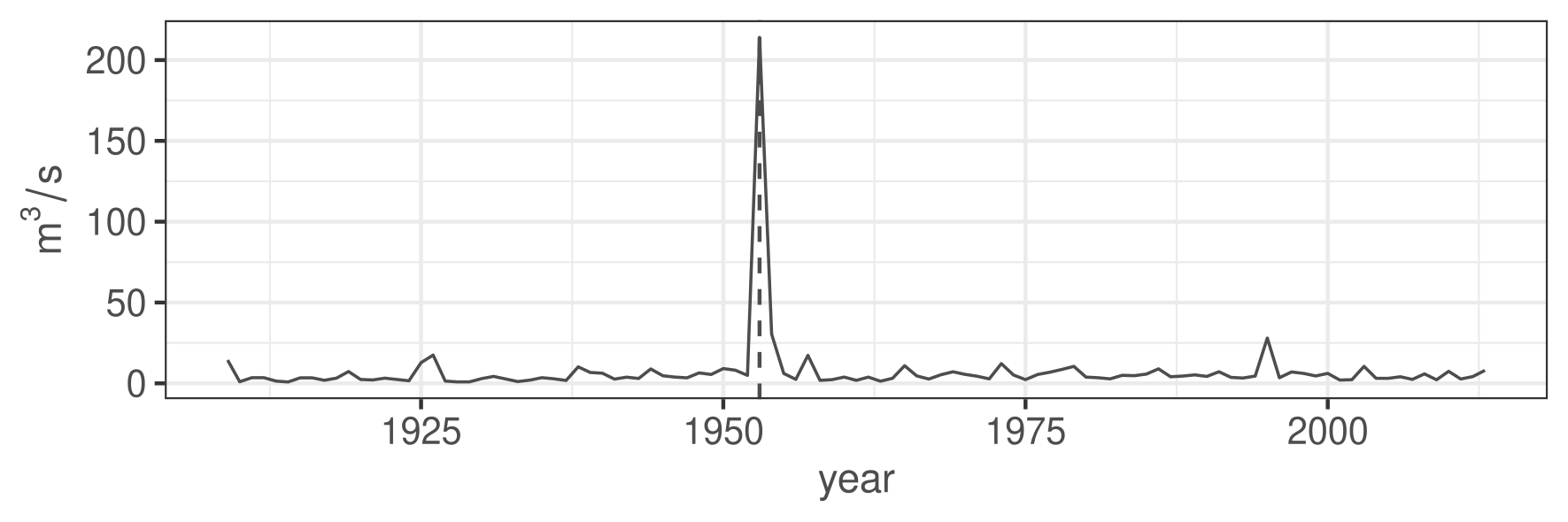}
\caption{\it Average daily flows for the dates 23rd of December (spatial component 53, upper row), 7th of February (spatial component 99, middle row) and 10th of July (spatial component 252, lower row).
Vertical dashed lines indicate the time points at which a break was detected by the sequential method with a test level of $5\%$.
\label{fig:dataExample}}
\end{figure}

\noindent \textbf{Acknowledgments}
This work has been supported in part by the Collaborative Research Center ``Statistical modeling of nonlinear dynamic processes'' (SFB 823, Teilprojekt A1, C1) and the Research Training Group 'high-dimensional phenomena in probability - fluctuations and discontinuity' (RTG 2131) of the German Research Foundation (DFG).
The authors are grateful to Florian Heinrichs for extremely helpful discussions during the preparation of this manuscript and to Andreas Schumann and Svenja Fischer from the Institute of Engineering Hydrology and Water Resources Management of the Ruhr-Universit\"at Bochum, who provided hydrological data analyzed in Section \ref{sec:dataExample}.
Finally, we would like to thank all reviewers for their constructive comments on an earlier version of the manuscript.
 
{\footnotesize
 \setlength{\bibsep}{0pt}

}
\newpage

\appendix

\section{Non-simultaneous changes among components}
\label{sec:diffchanges} 

{In this section, we present an algorithm to monitor for non-simultaneous change points in the mean vector of a high dimensional time series at test level $\alpha$.
For this purpose, we denote the set of all components without a change in the mean by
\begin{align}\label{def:StableSet}
\mathcal{S}_d := \big\{ h \in \{1,\dots,d\}\; \big| \; \mu_{1,h}=\mu_{2,h}=\dots =\mu_{m+Tm,h} \big\}~.
\end{align}
For the complement set $\mathcal{S}_d^c= \{1,\dots,d\}\setminus \mathcal{S}_d$ let $k_h^* \in \{1,\ldots , mT\}$ denote the time of change in the component $h \in \mathcal{S}_d^c$ within the monitoring period, that is
\begin{align*}
\mu_{1,h} = \dots = \mu_{m,h} = \dots = \mu_{m+k_h^*-1,h} \neq \mu_{m+k_h^*,h} = \dots =\mu_{m+Tm,h}~.
\end{align*}
Note that, in this model, there is still at most one change point per component, however we allow the change times to be different among components.}
For what follows, recall the construction of the bootstrap statistic from Section \ref{sec32}.
The algorithm below is capable of identifying the sets $\mathcal{S}_d$ and $\mathcal{S}_d^c$, see Theorem \ref{thm:algorithmConsistent} below.

\begin{algorithm}\label{alg:complete}\phantom{a} ~
\begin{enumerate}[label = Step \arabic*:, leftmargin=1.4cm]
\item Either choose the quantile $q_{1-\alpha}$ using the approximation by the Gumbel distribution, that is $q_{1-\alpha}=g_{1-\alpha}$, where $g_{1-\alpha}$ is the $(1-\alpha)$-quantile of the Gumbel distribution or, alternatively, obtain the quantile from the bootstrap as follows:
\begin{enumerate}[label = Step 1.\arabic*:, leftmargin=1.8cm]
\item Compute the long-run correlation estimates $\big(\hat{\rho}_{i,j}\big)_{i,j=1}^d$ from the initial set $\bm{X}_1,\dots,\bm{X}_m$.\vspace{0.2cm}
\item 
Based on these estimates, generate $N$ independent realizations of the Gaussian vectors $(\hatZ_{t,1}, \ldots , \hatZ_{t,d})^\top $ with covariance structure $\big(\hat{\rho}_{i,j}\big)_{i,j=1}^d$ for $t=1, \ldots, m +Tm$ and compute the corresponding bootstrap statistics 
$$a_d\big(\hatTmdZ(1) - b_d\big),a_d\big(\hatTmdZ(2)-b_d\big),\dots,a_d\big(\hatTmdZ(N)-b_d\big)$$
defined in \eqref{def:BSstatistic}.
\item Compute $q_{1-\alpha}$ as the empirical $(1-\alpha)$-quantile of the sample
$$\Big\{a_d\Big(\hatTmdZ(n)-b_d\Big)\Big\}_{n=1,\dots,N}~.
$$
\end{enumerate}
\item Initialize $\hatSdalpha := \{1,\dots,d\}$ and set $k=1$. 

Monitoring: While $k\le Tm$ compute the statistics $\hatE_{m,h}(k)$.
If the inequality
\begin{equation*}
\max_{h \in \hatSdalpha} w(k/m) \hatE_{m,h}(k) > \dfrac{q_{1-\alpha}}{a_d} + b_d
\end{equation*}
holds, reject the null hypothesis in favor of the alternative.
Eliminate the components that led to the rejection, i.e. 
\begin{align}\label{eq:compWithChange}
\hatSdalpha \longleftarrow \hatSdalpha \setminus \Big\{ h \in \hatSdalpha \;\big|\; w(k/m) \hatE_{m,h}(k) > \dfrac{q_{1-\alpha}}{a_d} + b_d\, \Big\}
\end{align}
and continue monitoring for $k\longleftarrow k+1$ with the remaining components in $\hatSdalpha$. 
\item
If there was no rejection during monitoring, decide for the null hypothesis of no change in the mean vector.
In case of rejections, decide for the alternative of a change in at least one component.
Then, it holds that
\begin{align}\label{eq:hatSdalpha}
\hatSdalpha = \Big\{ h \in \{1,\dots,d\} \;\;\Big|\;\; \maxkTm w(k/m) \hatE_{m,h}(k) \leq \dfrac{q_{1-\alpha}}{a_d} + b_d \Big\}
\end{align}
and the components remaining in this set are assumed as mean stable.
\end{enumerate}
\end{algorithm}

\noindent The following theorem states, that Algorithm \ref{alg:complete} is able to separate the sets $\mathcal{S}_d$ and $\mathcal{S}_d^c$ correctly.

\begin{theorem}\label{thm:algorithmConsistent}
Let Assumptions \ref{assump:model}-\ref{assump:spatialDependence} and \ref{assump:Bootstrap} be satisfied and assume that
Algorithm \ref{alg:complete} was launched either with the Gumbel or with the bootstrap quantile.
The set $\hatSdalpha$ defined in \eqref{eq:hatSdalpha} satisfies
\begin{align}\label{ineq:SdSubset}
\limsup_{m,d\to \infty}\, \Pb\Big( \mathcal{S}_d \subset \hatSdalpha \Big) \geq 1 - \alpha~.
\end{align}
If further 
\begin{align}\label{assump:uniformConsistency}
\sqrt{\dfrac{m}{\log m}} \, \min_{h \in \mathcal{S}_d^c} | \mu_{m+k_h^*-1,h} - \mu_{m+k_h^*,h}| \to \infty
\;\;\;
\text{and}
\;\;\;
\limsup_{m,d\to \infty} \max_{h \in \mathcal{S}_d^c} \dfrac{k_h^*}{m} < T\,,
\end{align}
then
\begin{align}\label{eq:SdC}
\limmd \Pb\Big( \mathcal{S}_d \supset \hatSdalpha \Big) = 1~.
\end{align}
\end{theorem}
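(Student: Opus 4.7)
The plan is to decouple the two inclusions \eqref{ineq:SdSubset} and \eqref{eq:SdC}: the first is a level statement on the stable components, which I would reduce to Corollary~\ref{cor:Gumbel} (together with Theorem~\ref{thm:Bootstrap} in the resampling case), and the second is a uniform consistency statement on the changing components, which I would reduce to the mechanism behind Theorem~\ref{thm:alternative}. In both steps I write $c_{d,\alpha}:=q_{1-\alpha}/a_d+b_d$.

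\textbf{Step 1: Proof of \eqref{ineq:SdSubset}.} Using the characterization \eqref{eq:hatSdalpha}, the event $\mathcal S_d\not\subset\hatSdalpha$ coincides with
\[
\Big\{\max_{h\in\mathcal S_d}\max_{k=1}^{Tm}w(k/m)\hatE_{m,h}(k)>c_{d,\alpha}\Big\}.
\]
To embed this event into a globally stable model I would introduce the coupled array $\tilde X_{t,h}:=X_{t,h}-\mu_{t,h}+\mu_{1,h}$ together with its aggregated statistic $\tilde{\mathcal T}_{m,d}$ built as in \eqref{hol1}. This $\tilde X$ has constant mean in every component and inherits Assumptions~\ref{assump:model}--\ref{assump:spatialDependence}, because the centered errors $e_{t,h}$ and their joint correlation structure are unchanged. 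Moreover $X$ and $\tilde X$ agree on the stable initial sample $\bm X_1,\ldots,\bm X_m$, so $\hatsigma_h$ and the bootstrap quantile $q_{1-\alpha}$ are identical in the two models. Since $\hatE_{m,h}(k)$ equals its $\tilde X$-counterpart for every $h\in\mathcal S_d$, monotonicity of the maximum gives
\[
\max_{h\in\mathcal S_d}\max_{k=1}^{Tm}w(k/m)\hatE_{m,h}(k)\le\tilde{\mathcal T}_{m,d},
\]
and Corollary~\ref{cor:Gumbel} applied to $\tilde X$ yields $\Pb(\tilde{\mathcal T}_{m,d}>c_{d,\alpha})\to\alpha$; in the bootstrap case one uses in addition Theorem~\ref{thm:Bootstrap} to deduce $q_{1-\alpha}\cip g_{1-\alpha}$ and a Slutsky argument to conclude.

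\textbf{Step 2: Proof of \eqref{eq:SdC}.} The complementary event occurs iff some $h\in\mathcal S_d^c$ satisfies $\max_{k=1}^{Tm}w(k/m)\hatE_{m,h}(k)\le c_{d,\alpha}$. For every such $h$, the second half of \eqref{assump:uniformConsistency} permits choosing $k(h)\in\{k_h^*,\ldots,Tm\}$ with $k(h)/m$ and $(Tm-k(h))/m$ both bounded below by a positive constant. Specializing the inner maximum in \eqref{hol31} to $j=k_h^*-1$ and decomposing $X_{t,h}=\mu_{t,h}+e_{t,h}$, the component statistic $\hatE_{m,h}(k(h))$ splits into a deterministic signal of order $\sqrt m\,|\Delta_h|/\hatsigma_h$, where $\Delta_h:=\mu_{m+k_h^*,h}-\mu_{m+k_h^*-1,h}$, minus a weighted centered CUSUM in the errors. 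Assumption~\ref{assump:TD2} together with Assumption~\ref{assump:varEstimator} makes the signal of exact order $\sqrt m\,|\Delta_h|$ uniformly in $h$, while the Gaussian approximation and extreme-value machinery underlying Theorem~\ref{thm:mainGumbel} produces a uniform error bound
\[
\max_{h=1}^{d}\max_{k=1}^{Tm}w(k/m)\,\big|\text{noise}_h(k)\big|=O_\Pb(\sqrt{\log d}).
\]
Since $c_{d,\alpha}=O(\sqrt{\log d})$ and, by \eqref{assump:uniformConsistency} combined with Assumption~\ref{assump:D1} (which ensures $\log d=O(\log m)$), $\sqrt m\min_{h\in\mathcal S_d^c}|\Delta_h|/\sqrt{\log d}\to\infty$, the signal uniformly dominates both the noise and the threshold, giving \eqref{eq:SdC}.

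\textbf{Main obstacle.} The delicate ingredient in Step~2 is the uniform-in-$h$ noise bound of order $\sqrt{\log d}$: a pointwise analysis only yields $O_\Pb(1)$, which is too weak once one takes a minimum of maxima over up to $d$ components. My intention is to recycle the Gaussian approximation, anti-concentration and Poisson-approximation tools already developed for Theorem~\ref{thm:mainGumbel}, since they produce precisely such a uniform $\sqrt{\log d}$-bound as a byproduct of the Gumbel convergence of the aggregated statistic under $H_0$. Once this bound is in place, replacing $\hatsigma_h$ by $\sigma_h$ via Assumption~\ref{assump:varEstimator} and $q_{1-\alpha}$ by $g_{1-\alpha}$ via Theorem~\ref{thm:Bootstrap} are routine $o_\Pb(1)$ adjustments that do not affect the leading-order comparison.
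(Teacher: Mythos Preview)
Your proposal is correct and follows essentially the same route as the paper. In Step~1 the paper writes $\Pb_{H_0}$ directly where you construct the coupled array $\tilde X$ explicitly, but the content is identical: restrict to $h\in\mathcal S_d$, bound by the full maximum over $h=1,\ldots,d$ under a globally stable model, and apply Corollary~\ref{cor:Gumbel} (plus Theorem~\ref{thm:Bootstrap} and a quantile-convergence lemma for the bootstrap case). In Step~2 the paper does exactly what you outline under ``main obstacle'': it bounds the centered noise term by the full aggregated statistic $\hatTmd$ computed from $X^{(c)}_{t,h}$ and invokes Corollary~\ref{cor:Gumbel} once more to obtain $a_d(\text{noise}-b_d)=O_\Pb(1)$, i.e.\ the uniform $\sqrt{\log d}$ bound you anticipated.

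One minor simplification: the paper does not search for a $k(h)$ with $k(h)/m$ and $(Tm-k(h))/m$ both bounded below; it simply takes $k(h)=Tm$ and $j=k_h^*-1$, so the relevant lower bound is on $(Tm-k_h^*+1)/m$, which is exactly what the second part of \eqref{assump:uniformConsistency} provides. Your stated conditions on $k(h)$ are not quite the right ones (what you actually need is $(k(h)-k_h^*)/m$ bounded below), but this does not affect the argument once you fix $k(h)=Tm$.
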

\noindent By Theorem \ref{thm:algorithmConsistent}, the set $\hatSdalpha$ contains all the components without a change in the mean with high probability.
It may also contain components in which the change in the mean is so small that it cannot be reliably detected.
However, if we assume that that all changes in the mean are large enough and not too close to the monitoring end [see \eqref{assump:uniformConsistency}], then $\hatSdalpha= \mathcal{S}_d$ with high probability.
That is, one detects all components exhibiting a change point without causing a false alarm.

\section{Proofs of main results}\label{sec:technicalDetails}

Let us first introduce some necessary notation, which will be frequently used throughout this section.
The symbol $\lesssim$ denotes an inequality up to a constant, which does not depend on size $m$ of the training sample and the dimension $d$.
For two sequences $e_n$ and $f_n$, we write $e_n \sim f_n$, whenever $f_n/e_n \to 1$ as $n \to \infty$.

Let $q\in (0,1]$. By $\FM$ we denote the distribution function of the range of a (standard) Brownian motion, that is $\M = \max_{t \in [0,q]} W(t) - \min_{t \in [0,q]} W(t)$, which can be found in \cite{Borodin1996}, page 146, and is given by
\begin{align*}
\FM(x) 
=
\begin{cases}
1 + 4 \sum\limits_{k=1}^\infty (-1)^k k \Erfc \Big( \dfrac{kx}{\sqrt{2q}}\Big)\;\;&\text{if}\;\; x > 0~,\\
0 &\text{otherwise}~,
\end{cases}
\end{align*}
where $\Erfc=1-\Erf$ denotes the complementary error function.
Using the elementary property $\Erf(x) = 2\Phi(x\sqrt{2})-1$ we obtain
\begin{align}\label{eq:repF}
\FM(x)
= 1 + 8\sum_{k=1}^\infty (-1)^k k \Phi\Big(-\frac{kx}{\sqrt{q}}\Big)\;\;\text{for}\;x>0\,,
\end{align}
where throughout this paper $\Phi$ denotes the cumulative distribution function of a standard normal distribution.

\subsection{Some preliminary results} \label{seca1} 

We will begin with an auxiliary result.
In Lemma \ref{lem:gumbel} we investigate the weak convergence of the maximum of independent identically distributed random variables with the same distribution as the random variable $\mathbb{M}$ defined in \eqref{conv:1dim}.

\begin{lemma}\label{lem:gumbel}
Let $q\in (0,1]$ and $M_1',M_2',\ldots$ be independent identically distributed random variables with
$$M_1'\eqd \max_{0 \leq t \leq q} W(t) - \min_{0 \leq t \leq q} W(t)~,$$
where $W$ denotes a standard Brownian motion.
Then, as $d \to \infty$, it holds that
\begin{align*}
a_d \Big ( \maxhd M_h' - b_d \Big ) \convd G\,,
\end{align*}
where $G$ is a standard Gumbel distributed random variable and $a_d$ and $b_d$ are given by
\begin{equation}
a_d=\sqrt{\frac{2 \log d}{q}} \quad \text{ and } \quad b_d=\sqrt{2q \log d}-\frac{\sqrt{q} \,(\log \log d - \log \tfrac{16}{\pi})}{2\sqrt{2 \log d}}\,.
\end{equation}
\end{lemma}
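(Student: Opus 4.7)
The plan is to treat this as a classical problem in extreme value theory for i.i.d.\ observations. Since the $M_h'$ are i.i.d.\ with common c.d.f.\ $\FM$, it suffices, by the well-known equivalence
\begin{align*}
\Pb\Big(\max_{h=1}^d M_h' \le u_d\Big) \to e^{-\tau}
\quad \Longleftrightarrow \quad
d\,\bigl(1-\FM(u_d)\bigr) \to \tau,
\end{align*}
to choose $u_d = b_d + x/a_d$ and verify $d\,(1-\FM(b_d+x/a_d)) \to e^{-x}$ for every $x\in\R$. The whole argument thus reduces to (i) obtaining a sharp tail asymptotic for $\FM$ and (ii) inverting this tail asymptotic to recover the stated $a_d,b_d$.

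For step (i), I would start from the series representation recalled in the excerpt,
\begin{align*}
1 - \FM(x) = 8\sum_{k=1}^\infty (-1)^{k+1} k\, \Phi\!\Big(-\frac{kx}{\sqrt{q}}\Big), \qquad x>0.
\end{align*}
Since $\Phi(-kx/\sqrt{q})/\Phi(-x/\sqrt{q}) = O\bigl(k^{-1}\exp(-(k^2-1)x^2/(2q))\bigr)$ by the Mills ratio, the $k=1$ term dominates and
\begin{align*}
1-\FM(x) = 8\,\Phi\!\Big(-\tfrac{x}{\sqrt{q}}\Big)\,(1+o(1))
          = \frac{8\sqrt{q}}{x\sqrt{2\pi}}\exp\!\Big(-\tfrac{x^2}{2q}\Big)\,(1+o(1))
\quad\text{as }x\to\infty,
\end{align*}
again via the standard Mills ratio $\Phi(-y)\sim \phi(y)/y$.

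For step (ii), I would show that $\FM$ lies in the Gumbel domain of attraction with auxiliary function $a(t)=q/t$, i.e.\ $(1-\FM(t+xq/t))/(1-\FM(t))\to e^{-x}$ as $t\to\infty$; a direct substitution using the asymptotic above yields this limit, since $(t+xq/t)^2/(2q) - t^2/(2q) = x + O(1/t^2)$ and the ratio of the polynomial prefactors tends to $1$. Standard EVT (see e.g.\ de~Haan--Ferreira) then ensures the normalising constants may be taken as any solution of $d(1-\FM(b_d))\to 1$ together with $a_d = 1/a(b_d) = b_d/q$. Solving the asymptotic equation
\begin{align*}
\frac{8\sqrt{q}}{b_d\sqrt{2\pi}}\exp\!\Big(-\tfrac{b_d^2}{2q}\Big) = \frac{1+o(1)}{d}
\end{align*}
by the ansatz $b_d = \sqrt{2q\log d} - \varepsilon_d$, I would expand $b_d^2/(2q) = \log d - \varepsilon_d\sqrt{2\log d/q} + o(1)$ and $\log b_d = \tfrac12\log\log d + \tfrac12\log(2q) + o(1)$; equating log-terms forces $\varepsilon_d\sqrt{2\log d/q} = \tfrac12\log\log d + \log(\sqrt{\pi}/4) + o(1)$, which rearranges exactly to the claimed expression for $b_d$, while $a_d = b_d/q \sim \sqrt{2\log d/q}$ coincides with the stated sequence up to lower-order terms (which do not affect the limit by Slutsky).

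The main technical obstacle is the bookkeeping of the constants in the inversion of the Gaussian tail: the $\log(16/\pi)$ correction is produced by the factor $8\sqrt{q}/\sqrt{2\pi}$ in front of the Mills ratio combined with the $\log b_d \sim \tfrac12\log\log d$ term, so one must track these constants carefully. Everything else — the tail expansion, the domain-of-attraction condition, and the passage from $d(1-\FM(u_d))\to e^{-x}$ to weak convergence of $a_d(\max_h M_h' - b_d)$ — is a standard application of the classical i.i.d.\ extreme-value machinery and requires no further probabilistic input.
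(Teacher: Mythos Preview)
Your proposal is correct and reaches the same destination as the paper, but by a somewhat different route. Both arguments begin by extracting the sharp tail asymptotic $1-\FM(x)\sim \tfrac{4\sqrt{2q}}{\sqrt{\pi}}\,x^{-1}\exp(-x^2/(2q))$ from the series \eqref{eq:repF}, showing that the $k=1$ term dominates. From there, however, the paper proceeds via the von Mises condition: it differentiates the series twice, verifies $\lim_{x\to\infty}(1-\FM(x))\FM''(x)/(\FM'(x))^2=-1$ (Theorem~1.1.8 of de~Haan--Ferreira), then invokes tail equivalence (Proposition~3.3.28 of Embrechts et~al.) with a Weibull-like comparison distribution $H$ and reads off $a_d,b_d$ from Table~3.4.4 of that reference. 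You instead work directly with the tail-ratio characterisation $(1-\FM(t+xq/t))/(1-\FM(t))\to e^{-x}$, which follows immediately from the tail asymptotic you already have, and then solve $d(1-\FM(b_d))\to 1$ by an explicit ansatz. Your path is more self-contained and avoids differentiating the series term by term and justifying that step; the paper's path has the advantage that once the von Mises condition is checked, the norming constants come from a table rather than a hand calculation, which reduces the risk of arithmetic error in tracking the $\log(16/\pi)$ constant. One minor point worth tightening in your write-up: when you pass from $a_d=b_d/q$ to the stated $a_d=\sqrt{2\log d/q}$, you should state explicitly that $a_d'/a_d\to 1$ implies the same Gumbel limit (convergence of types), rather than just gesturing at Slutsky.
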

\begin{proof}[Proof of Lemma \ref{lem:gumbel}]
The desired extreme value convergence will be derived from the distribution function of $M_1'$, which is given by $\FM$ in \eqref{eq:repF}.
Observe, that $\FM$ is twice differentiable with derivatives (for $x > 0$)
\begin{align*}
\FM'(x) 
=\dfrac{4\sqrt{2}}{\sqrt{\pi q}}\sum_{k=1}^\infty (-1)^{k+1} k^2 \exp \Big(- \dfrac{k^2x^2}{2q}\Big)~,\\
\FM''(x)
= \dfrac{4\sqrt{2}}{\sqrt{\pi q}q}\sum_{k=1}^\infty (-1)^{k} k^4 x \exp \Big(- \dfrac{k^2x^2}{2q}\Big)~,
\end{align*}
where we used that the series converge uniformly on all intervals $[\varepsilon, \infty)$ for $\varepsilon>0$ and therefore term by term differentiation is allowed.
Thus, by Theorem 1.1.8 from \cite{DeHaan2006} the distribution function $\FM$ is in the domain of attraction of the Gumbel distribution if 
\begin{align}\label{eq.gumbel}
\lim_{x\to \infty} \dfrac{(1-\FM(x))\FM''(x)}{(\FM'(x))^2} = -1\,.
\end{align}
Next, we prove \eqref{eq.gumbel}.
To this end, note that by the definition of the complimentary error function, we obtain for $x>0$
\begin{align}\label{eq:firstGumbelres}
\begin{split}
x\big(1-\FM(x)\big)\exp \Big(\dfrac{x^2}{2q}\Big)
&= 4 \sum\limits_{k=1}^\infty (-1)^{k+1} k x\Erfc \Big( \dfrac{kx}{\sqrt{2q}}\Big)\exp\Big(\dfrac{x^2}{2q}\Big)\\
&= A_1(x) + A_2(x)\,,
\end{split}
\end{align}
where the two summands on the right-hand side are given by
\begin{align*}
A_1(x) &= \dfrac{8}{\sqrt{\pi}}\int_{\frac{x}{\sqrt{2q}}}^\infty \exp(-\tau^2)d\tau  \cdot x \exp \Big(\dfrac{x^2}{2q}\Big)~,\\
A_2(x) &= \dfrac{8}{\sqrt{\pi}} \sum\limits_{k=2}^\infty (-1)^{k+1} k x\int_{\frac{kx}{\sqrt{2q}}}^\infty \exp(-\tau^2)d\tau  \cdot \exp\Big(\dfrac{x^2}{2q}\Big)~.
\end{align*}
We will treat the two summands of the last display separately.
For the first summand we obtain that
\begin{align*}
\limx A_1(x)
=& \dfrac{8}{\sqrt{\pi}} \limx \frac{\int_{\frac{x}{\sqrt{2q}}}^\infty \exp(-\tau^2)d\tau }{x^{-1} \exp \Big(\dfrac{-x^2}{2q}\Big)}\\
=& \dfrac{8}{\sqrt{\pi}} \limx \frac{\dfrac{-1}{\sqrt{2q}} \exp \Big(\dfrac{-x^2}{2q}\Big)}{-x^{-2} \exp \Big(\dfrac{-x^2}{2q}\Big)-\frac{1}{q}\exp \Big(\dfrac{-x^2}{2q}\Big)}
=4\dfrac{\sqrt{2q}}{\sqrt{\pi}}
\end{align*}
by L'H\^{o}spital's rule.
For the second summand of the last display in \eqref{eq:firstGumbelres} note that
\begin{align*}
\Big| \dfrac{\sqrt{\pi}}{8}A_2(x) \Big|
&\leq \sum\limits_{k=2}^\infty k x\int_{\frac{kx}{\sqrt{2q}}}^\infty \exp(-\tau^2)d\tau 
\cdot \exp\Big(\dfrac{x^2}{2q}\Big)\\
&\leq \sqrt{2q}\sum\limits_{k=2}^\infty \int_{\frac{kx}{\sqrt{2q}}}^\infty \tau \exp(-\tau^2)d\tau
\cdot \exp\Big(\dfrac{x^2}{2q}\Big)\\
&= \sqrt{\dfrac{q}{2}} \exp\Big(\dfrac{-x^2}{2q}\Big)\sum\limits_{k=2}^\infty  \exp\Big(-\frac{(k^2-2)x^2}{2q}\Big)\\ 
& \overset{(x\geq 1)}{\leq} \sqrt{\dfrac{q}{2}} \exp\bigg(\dfrac{-x^2}{2q}\bigg)\sum\limits_{k=2}^\infty  \exp\bigg(-\frac{k^2-2}{2q}\bigg)
= o(1)\;\;\text{as}\;\;x \to \infty~.
\end{align*}
Combining the last statements with the decomposition in \eqref{eq:firstGumbelres} yields
\begin{align}\label{eq:F0}
\limx x\big(1-\FM(x)\big)\exp \Big(\dfrac{x^2}{2q}\Big)
= 4\dfrac{\sqrt{2q}}{\sqrt{\pi}}~.
\end{align}
For the denominator of \eqref{eq.gumbel} note that
\begin{align}\label{eq:F1}
\begin{split}
\limx \FM'(x)\exp \Big(\dfrac{x^2}{2q}\Big)
&= \dfrac{4\sqrt{2}}{\sqrt{\pi q}}
+\limx \dfrac{4\sqrt{2}}{\sqrt{\pi q}}\sum_{k=2}^\infty (-1)^{k+1} k^2 \exp \Big(- \dfrac{(k^2-1)x^2}{2q}\Big)\\
&= \dfrac{4\sqrt{2}}{\sqrt{\pi q}}~,
\end{split}
\end{align}
where we used that for $x\geq 1$
\begin{align*}
\sum_{k=2}^\infty k^2 \exp \Big(- \dfrac{(k^2-1)x^2}{2q}\Big)
&\leq \exp\Big(\dfrac{-x^2}{2q}\Big)\sum_{k=2}^\infty k^2 \exp \Big(- \dfrac{(k^2-2)}{2q}\Big)
=o(1)\;\;\text{as}\;\;x \to \infty~.
\end{align*}
Using similar arguments we obtain
\begin{align}\label{eq:F2}
\begin{split}
\limx &x^{-1}\FM''(x)\exp \Big(\dfrac{x^2}{2q}\Big)\\
=&\dfrac{-4\sqrt{2}}{\sqrt{\pi q}q}
+\limx \dfrac{4\sqrt{2}}{\sqrt{\pi q}q}\sum_{k=2}^\infty (-1)^{k} k^4 \exp \Big(- \dfrac{x^2(k^2-1)}{2q}\Big)
=\dfrac{-4\sqrt{2}}{\sqrt{\pi q}q}~.
\end{split}
\end{align}
Combining \eqref{eq:F0}, \eqref{eq:F1} and \eqref{eq:F2}, it follows that 
\begin{align*}
&\limx \dfrac{(1-\FM(x))\FM''(x)}{(\FM'(x))^2}= \frac{\lim\limits_{x\to \infty} x(1-\FM(x))\exp \Big(\dfrac{x^2}{2q}\Big) \lim\limits_{x\to \infty} x^{-1}\FM''(x)\exp \Big(\dfrac{x^2}{2q}\Big)}{\lim\limits_{x\to \infty} \Big (\FM'(x)\exp \Big(\dfrac{x^2}{2q}\Big)\Big  )^2}
=-1,
\end{align*}
which completes the proof of \eqref{eq.gumbel}.
By definition of the maximum domain of attraction, \eqref{eq.gumbel} is equivalent to the existence of sequences $a_d>0, b_d$ such that 
\begin{align*}
a_d \Big( \maxhd M_h' - b_d \Big) \convd G\;\; \text{as}\;\; d \to \infty~.
\end{align*}
Our next goal is to find an explicit formula for these sequences, for which we will employ Proposition 3.3.28 in \cite{Embrechts1997}.
Therefore consider the distribution function
$$ H(x):= 1 - 4 \frac{\sqrt{2 q}}{\sqrt{\pi}} x^{-1} \exp\big(-x^2/(2q\big)\,,\qquad x>x_0~.$$
Note that we can rewrite equation \eqref{eq:F0} as
\begin{equation}
\lim_{x \to \infty} (1-\FM(x))\Big(4 \frac{\sqrt{2 q}}{\sqrt{\pi}} x^{-1} \exp\big(-x^2/(2q\big) \Big)^{-1}= 1\,
\end{equation}
and thereby obtain the tail-equivalence
$$\lim_{x \to \infty} \frac{1-\FM(x)}{1-H(x)}=1\,.$$
Thus, by Proposition 3.3.28 in \cite{Embrechts1997} we have to find sequences $a_d, b_d$ such that $\lim_{d \to \infty} H^d(a_d^{-1} x+b_d)=F_G(x)$ for all $x\in \R$.
Since $H$ has Weibull-like tails, a possible choice of the norming sequences is given in Table 3.4.4 in \cite{Embrechts1997}:
\begin{align*}
b_d&=\sqrt{2q \log d}+\frac{1}{2\sqrt{2q \log d}} \Big(-q\log\big(2q\log d\big)+2q \log \sqrt{\frac{32 q}{\pi}} \Big)\\
a_d^{-1}&= q (2q \log d )^{-1/2}\,.
\end{align*}
After simplification we get the desired result
\begin{equation}
a_d=\sqrt{\frac{2 \log d}{q}} \quad \text{ and } \quad b_d=\sqrt{2q \log d}-\frac{\sqrt{q} \,(\log \log d - \log \tfrac{16}{\pi})}{2\sqrt{2 \log d}}\,,
\end{equation}
which finishes the proof of Lemma \ref{lem:gumbel}.
\end{proof}

\subsection{Proof of Theorem \ref{thm:gumbelWiener.dep}} \label{seca2} 
Let $q\in (0,1]$ and recall the definition of $M_h$ in Theorem \ref{thm:gumbelWiener.dep}.
By Theorem 1 from \cite{Arratia1989} in the form as presented in Lemma A.4 in \cite{jiang2004} we obtain for any
$x\in\mathbb{R}$ the inequality 
\begin{align}\label{eq:b123}
\left|\Pb\left(\maxhd M_h\leq u_d(x)\right)-\exp(-\lambda)\right|
\leq \big(1\wedge\lambda^{-1}) (\Lambda_1 + \Lambda_2 + \Lambda_3\big)~,
\end{align}
with
$\lambda=\sum_{h=1}^d\Pb\big(M_h>u_d(x)\big)$ and
\begin{align*}
\Lambda_1 &= \sum_{i=1}^d \sum_{\substack{1\leq j \leq d\\ |i-j|\leq L_d}}\Pb\big(M_i>u_d(x)\big)\Pb\big(M_j>u_d(x)\big)~,\\*
\Lambda_2 &= \sum_{i=1}^d \sum_{\substack{1\leq j \leq d\\ j \neq i,\, |i-j|\leq L_d}}\Pb\big(M_i>u_d(x),M_j>u_d(x)\big)~,\\*
\Lambda_3 &= \sum_{i=1}^d \E\left| \Pb\Big(M_i > u_d(x)\,|\,\sigma\big( M_j: |i-j|>L_d \big) \Big) - \Pb(M_i > u_d(x)\big)\right|~,
\end{align*}
where $u_d(x)=x/a_d+b_d$ and $\sigma\big( M_j: |i-j|>L_d \big)$ denotes the $\sigma$-algebra generated by the set $\big\{ M_j: |i-j| > L_d \big\}$.
In the remainder of the proof we fix $x \in \R$ and due to $a_d,\, b_d \to \infty$ we can assume that $d$ is sufficiently large such that $u_d(x) >0$.
Further let $M_1',M_2',\ldots$ be i.i.d.~random variables with $M_1'\eqd M_1$.
With Lemma \ref{lem:gumbel} we have 
\begin{align}\label{eq:maxiid}
\limd \Pb\Big(\maxhd M_h'\leq u_d(x)\Big)
= \limd\Big(\Pb\big(M_1\leq u_d(x)\big)\Big)^d
= \exp\big(-\exp(-x)\big)~.
\end{align}
As $b_d\rightarrow \infty$ and $\lim\limits_{x \rightarrow 0}\frac{x}{\log(1-x)}=-1$, \eqref{eq:maxiid} yields
\begin{align}\label{eq:lam1}
\begin{split}
\lambda
= &\sum_{h=1}^d\Pb\big(M_h > u_d(x)\big)
= d\Pb\big(M_1>u_d(x)\big)\\
= &-d\log\left(1-\Pb\big(M_1>u_d(x)\big)\right)\big(1+o(1)\big)\\
= &-\left(\log\left(\Pb(M_1\leq u_d(x))^d\right)\right)\big(1+o(1)\big)
\rightarrow\exp(-x)\quad\mbox{as }d\rightarrow\infty~.
\end{split}
\end{align}
To treat $\Lambda_1$, note that \eqref{eq:lam1} yields for $d\to\infty$
\begin{align}\label{sim:tail}
1 - \FM(u_d(x)) \sim \dfrac{\exp(-x)}{d}~.
\end{align}
Since $L_d=o(d)$ by assumption, we obtain 
\begin{align}\label{eq:b1}
\Lambda_1
= 2dL_d\big(1-\FM(u_d(x))\big)^2
&\sim 2 \dfrac{L_d}{d} \exp(-2x)=o(1)~.
\end{align}
To derive the asymptotic properties of $\Lambda_2$, observe the bound
\begin{align}\label{eq:Lambda2bound}
\begin{split}
\Lambda_2 
&\leq  2d L_d \max_{\substack{1\leq i<j\leq d\\ \rho_{i,j}=0,\,|i-j|\leq L_d,}} \Pb \big(M_i>u_d(x),M_j>u_d(x)\big)\\
&\quad + 
2d L_d \max_{\substack{1\leq i<j\leq d\\ \rho_{i,j}\neq 0,\,|i-j|\leq L_d,}} \Pb \big(M_i>u_d(x),M_j>u_d(x)\big),
\end{split}
\end{align}
where throughout this proof we write $\rho_{i,j}$ instead of $\rho_{i,j}^{(d)}$ for simplicity. 
Note, that in case of $\rho_{i,j}=0$ it holds by \eqref{sim:tail}, as $d \to \infty$~,
\begin{align*}
\Pb \big(M_i>u_d(x),M_j>u_d(x)\big)&= \big(\Pb \big(M_1 > u_d(x))\big)^2\sim \left(\frac{\exp(-x)}{d} \right)^2~,
\end{align*}
which implies that the first summand on the right-hand side in \eqref{eq:Lambda2bound} vanishes.
To treat the other one, we follow the idea in Lemma B.11 from \cite{Jirak2015supp} and use that a comparison of the covariance structures of the two Gaussian processes yields
\begin{align*}
\Big\{\big(W_i(t),W_j(t)\big)\Big\}_{t \geq 0}
\eqd \Big\{\left(W_i(t),\sqrt{1-\rho_{i,j}^2}W_j'(t) + \rho_{i,j}W_i(t)\right)\Big\}_{t \geq 0}~,
\end{align*}
where $W_j'$ is a standard Wiener process that is independent of $W_i$.
Consequently, it also holds that
\begin{align*}
\left(M_i,\, M_j\right)
\eqd \left(M_i, \sup_{0 \leq t \leq q} \sup_{0 \leq s \leq t}
\left|\sqrt{1-\rho_{i,j}^2}\big(W_j'(t) - W_j'(s)\big) + \rho_{i,j}\big(W_i(t)-W_i(s)\big)\right|\right)
\end{align*}
and by the triangle inequality
\begin{align*}
&\sup_{0 \leq t \leq q} \sup_{0 \leq s \leq t}
\left|\sqrt{1-\rho_{i,j}^2} \big( W_j'(t)-W_j'(s) \big) + \rho_{i,j}\big(W_i(t)-W_i(s)\big)\right|\\
&\leq \sqrt{1-\rho_{i,j}^2} \sup_{0 \leq t \leq q} \sup_{0 \leq s \leq t}
\left|W_j'(t)-W_j'(s)\right|+|\rho_{i,j}|\sup_{0 \leq t \leq q} \sup_{0 \leq s \leq t}\left|W_i(t)-W_i(s)\right|\\
&\eqd\sqrt{1-\rho_{i,j}^2} M_j'+|\rho_{i,j}|M_i~,
\end{align*}
where $M_j'$ has the same distribution as $M_j$ but is independent of $M_i$.
Now, we obtain for $\delta \in (0,1)$ and $\vep=\delta \big(\tfrac{1}{|\rho_{i,j}|}-1\big)$,
\begin{align*}
&\Pb \big(M_i>u_d(x),M_j>u_d(x)\big)
\leq \Pb\left(M_i>u_d(x),\sqrt{1-\rho_{i,j}^2}M_j'+|\rho_{i,j}|M_i>u_d(x)\right)\\
&= \int_{u_d(x)}^{\infty} \Pb\left(M_j'\geq \frac{u_d(x)-y|\rho_{i,j}|}{\sqrt{1-\rho_{i,j}^2}}\right)\Pb_{M_i}(dy)\\
&\le  \int_{u_d(x)}^{u_d(x)\big(\frac{1}{|\rho_{i,j}|}-\vep\big)}  \Pb\left(M_j'\geq \frac{u_d(x)-y|\rho_{i,j}|}{\sqrt{1-\rho_{i,j}^2}}\right)\Pb_{M_i}(dy)
+ \Pb \Big(M_i > u_d(x)\big(\tfrac{1}{|\rho_{i,j}|}-\vep \big) \Big)\\
&:= P_{ij}^{(1)} + P_{ij}^{(2)}\,.
\end{align*}
First, we bound $P_{ij}^{(2)}$. To this end, recall from \eqref{eq:F0} that
\begin{equation}\label{eq:A.4} 
\Pb(M_1>x) \sim K x^{-1} \exp(-x^2/(2q)) \,, \qquad x \to \infty~,
\end{equation}
where $K=4 \frac{\sqrt{2 q}}{\sqrt{\pi}}$.
By $|\rho_{i,j}|\le \rho_+<1$ we obtain
\begin{align}\label{eq:P(2)}
\begin{split}
P_{ij}^{(2)} &= \Pb \Big(M_i > u_d(x)\big(\tfrac{1}{|\rho_{i,j}|}(1-\delta) +\delta \big) \Big)
\leq \Pb \Big(M_i > u_d(x)\big(\tfrac{1}{\rho_+}(1-\delta) +\delta \big) \Big) \\
&\sim K \frac{1}{u_d(x)\big(\tfrac{1}{\rho_+}(1-\delta) +\delta \big)} \exp\Big( -\frac{ u_d(x)^2}{2q} \big(\tfrac{1}{\rho_+}(1-\delta) +\delta \big)^2\Big)\\
&\leq \frac 12 \exp\Big( -\frac{ u_d(x)^2}{2q} \big(\rho_+^{-1} (1-\delta)+\delta \big)^2\Big)\,,
\end{split}
\end{align}
where the last inequality holds for sufficiently large $d$ depending on $\rho_+$ since $u_d(x) \to \infty$.
As regards $P_{ij}^{(1)}$, we have
\begin{align*}
P_{ij}^{(1)}
&\leq \Pb\big(M_1 > u_d(x)\big) \, \Pb\Big( M_1 > u_d(x) \tfrac{\vep |\rho_{i,j}|}{\sqrt{1-\rho_{i,j}^2}} \Big)\\
&= \Pb\big(M_1 > u_d(x)\big) \, \Pb\Big( M_1 > u_d(x)\delta \tfrac{1 - |\rho_{i,j}|}{\sqrt{1-\rho_{i,j}^2}} \Big) 
\leq \Pb\big(M_1 > u_d(x)\big) \, \Pb\Big( M_1 > u_d(x)\delta \tfrac{1 - \rho_+}{\sqrt{1-\rho_+^2}} \Big) ~,
\end{align*}
where we used that the map $x \mapsto (1-x)/\sqrt{1-x^2}$ is decreasing on $[0,1)$.
Employing again \eqref{eq:A.4}, we conclude for sufficiently large $d$
\begin{align}\label{eq:P(1)}
\begin{split}
\Pb\big(M_1 > u_d(x)\big) \, \Pb\Big( M_1 > u_d(x)\delta \tfrac{1 - \rho_+}{\sqrt{1-\rho_+^2}} \Big)
&\sim K^2 \frac{\sqrt{1-\rho_+^2}}{u_d(x)^2\delta(1-\rho_+)} \exp\left( -\frac{u_d(x)^2}{2q} \left(1+ \tfrac{\delta^2(1-\rho_{+})^2}{1-\rho_+^2}\right) \right)\\
&\leq \frac 12 \exp\left( -\frac{u_d(x)^2}{2q} \left(1+ \delta^2 (1 - \rho_{+})^2\right)  \right)\,.
\end{split}
\end{align}
Combining \eqref{eq:P(2)} and \eqref{eq:P(1)} we get 
\begin{align*}
\Pb \big(M_i>u_d(x),M_j>u_d(x)\big)&\le \exp\Big( -\frac{ u_d(x)^2}{2q} \min(f(\delta), g(\delta))\Big)\,,
\end{align*}
where the functions $f,g$ are defined by $f(\delta)=\big(\rho_+^{-1} (1-\delta)+\delta \big)^2$ and $g(\delta)=1+ \delta^2(1- \rho_{+})^2$.
Next, we will optimize this bound in $\delta \in (0,1)$.
Observe that $f(0)>g(0)$ and $f(1)<g(1)$. Since $f$ is decreasing on $[0,1]$ while $g$ is increasing on $[0,1]$, we deduce by continuity of $f$ and $g$ that there exists a unique $\delta^{\star}\in (0,1)$ such that $f( \delta^{\star})=g(\delta^{\star})$. Solving this equation, we find that 
\begin{equation}\label{eq:deltastar}
\delta^{\star} = \frac{1-\sqrt{2 \rho_+^2-\rho_+^4}}{1-\rho_+-\rho_+^2+\rho_+^3} \qquad \text{ and } \qquad g(\delta^{\star})
=1+\frac{\left(1- \sqrt{2\rho_+^2 - \rho_+^4}\right)^2}{ (1-\rho_+^2)^2}\,.
\end{equation}
Thus, it follows that
\begin{align}\label{eq:joint1}
2d L_d \max_{\substack{1\leq i<j\leq d\\ \rho_{i,j}\neq 0\,, |i-j|\leq L_d}}
\Pb \big(M_i > u_d(x), M_j > u_d(x)\big)
\leq 2d L_d \exp\Big( -\frac{ u_d(x)^2}{2q} g(\delta^{\star})\Big).
\end{align}
Using the definition of $u_d(x)$ we deduce for any $\Delta < g(\delta^{\star})-1$, 
\begin{align}\label{eq:joint2}
d L_d \exp\Big( -\frac{ u_d(x)^2}{2q} g(\delta^{\star})\Big)
&\le d L_d \exp\big( - (\Delta+1) \log d \big)= d^{-\Delta} L_d,
\end{align}
provided $d$ is sufficiently large.
By the assumption in \eqref{assump:Ld} there exists a $\Delta\in (0,g(\delta^{\star})-1)$ such that $d^{-\Delta} L_d\to 0$ as $d \to \infty$, which together with \eqref{eq:Lambda2bound} establishes $\Lambda_2\to 0$. 

Due to $\rho_{i,j} = 0$ for $|i-j| > L_d$ we obtain that the Gaussian processes $W_i$ and $W_j$ are already independent whenever $|i-j| > L_d$ [see for instance \cite{Billingsley1999}] and therefore we have that $\Lambda_3 = 0$.
The assertion now follows by combining this fact with \eqref{eq:b123}, \eqref{eq:lam1}, \eqref{eq:b1} and $\Lambda_2\to 0$.

\subsection{Proof of Theorem \ref{thm:mainGumbel}} \label{seca3}
 
{\it Throughout this proof, we will work with the sequences $a_d, b_d$ defined in \eqref{eq:adbd} with $q = T/(T+1)=q(T)$.}
Recall that the detector \eqref{hol1} is based on differences of component-wise mean estimators
\begin{align*}
\hatmu_{m+j+1}^{m+k}(h) - \hatmu_1^{m+j}(h)
\end{align*}
and   we may without loss of generality assume $\E[X_{t,h}]=0$ throughout the proof.
First, we introduce some necessary notations.
Analogously to Theorem \ref{thm:gumbelWiener.dep} let $ \big(W_1,\dots,W_d\big)^\top$ denote a $d$-dimensional Brownian motion on the interval $[0,\qT]$ with correlations: 
\begin{align}\label{eq:longruncorrW}
\Corr\big(W_h(t),W_i(t)\big) = \tilderho_{h,i}:= \rho_{h,i}\cdot I\{ |h-i| \leq L_d\}~,
\end{align}
where $\rho_{h,i}$ denotes the long-run correlation defined in \eqref{eq:longruncorr}
and $L_d := d^\Delta\to \infty$ is a sequence, where 
\begin{align*}
\Delta<\left(\frac{1- \rho_+ \sqrt{2-\rho_+^2}}{ 1-\rho_+^2} \right)^2 
\end{align*}
and $\rho_+$ is the constant from Assumption \ref{assump:spatialDependence}. Note that $\tilderho_{h,i}$ depends on $d$ only through the indicator $I\{ |h-i| \leq L_d\}$. Denote again by
\begin{align}\label{def:Mh}
M_h
:= \max_{t \in [0,\qT]} W_h(t) - \min_{t \in [0,\qT]} W_h(t)
= \max_{t \in [0,\qT]} \max_{s \in [0,t]} \big|W_h(s) - W_h(t)\big|
\end{align}
the range of $W_h$.
For $0< c < T$ define additionally the truncated version
\begin{align}\label{def:Mhc}
M_h(c)
= \max_{t \in [q(c),\qT]} \max_{s \in [0,\,q(q^{-1}(t)-c)]} \big|W_h(s) - W_h(t)\big|\,,
\end{align}
where $q(x)=x/(x+1)$, $q^{-1}(x) = x/(1-x)$ and consider the overall maxima of these quantities,
\begin{align}\label{def:wd}
\mathcal{W}_d &= \maxhd M_h\quad \text{ and } \quad \mathcal{W}_d(c) = \maxhd M_h(c)~.
\end{align}
Recall the definition of the Gaussian statistic $\hatTmdZ$ in \eqref{def:BSstatistic} based on the random variables $\{\hatZ_{t,h}\}_{t=1,\dots,m+mT}^{h=1,\dots,d}$ defined in \eqref{eq:BootstrapZ}.
We introduce two additional sets of independent, centered Gaussian random vectors
$$
\big\{ \bm{Z}_{t} = \big(Z_{t,1},\dots,Z_{t,d}\big)^\top\big\}_{t=1,\dots,m+mT}
\;\;\;
\text{and}
\;\;\;
\big\{ \bm{\tilde{Z}}_{t} = \big(Z_{t,1},\dots,Z_{t,d}\big)^\top\big\}_{t=1,\dots,m+mT}
$$
with spatial covariance structures
\begin{align}\label{def:GaussFields}
\begin{split}
\Cov\big(Z_{t,h},\,Z_{t,i}\big) = \rho_{h,i}
\;\;&\text{and}\;\;
\Cov\big(\tildeZ_{t,h},\,\tildeZ_{t,i}\big) = \tilderho_{h,i}\,,
\end{split}
\end{align}
where $\rho_{h,i}$ and $\tilderho_{h,i}$  are the long-run correlations and truncated long-run correlations defined in \eqref{eq:longruncorr} and \eqref{eq:longruncorrW}, respectively.

Next, we define analogues of the statistic $\hatTmdZ$ in \eqref{def:BSstatistic} by 
\begin{align}\label{def:TmdZs}
\begin{split}
\TmdZ &:= \maxhd \maxkTm \max_{j=0}^{k-1} \frac{(k-j)w(k/m)}{\sqrt{m}}\Big|z_{m+j+1}^{m+k}(h) - z_1^{m+j}(h) \Big|~,\\
\tildeTmdZ &:= \maxhd \maxkTm \max_{j=0}^{k-1} \frac{(k-j)w(k/m)}{\sqrt{m}}\Big|\widetilde{z}_{m+j+1}^{m+k}(h) - \widetilde{z}_1^{m+j}(h) \Big|~,
\end{split}
\end{align}
where 
\begin{align}\label{def:smallzTildez}
z_i^{j}(h) := \dfrac{1}{j-i+1}\sum_{t=i}^j Z_{t,h}
\;\;\text{and}\;\;
\widetilde{z}_i^{j}(h) := \dfrac{1}{j-i+1}\sum_{t=i}^j \tildeZ_{t,h}~.
\end{align}
For a constant $0< c < T$, such that $cm \in \N$, we will now consider truncated versions of the statistics $\Tmd, \TmdZ, \hatTmdZ$ and $\tildeTmdZ$ defined by
\begin{align}\label{def:truncatedTmds}
\begin{split}
\Tmd(c)
:= \maxhd \max_{k=cm+1}^{Tm} \max_{j=0}^{k-cm-1} \frac{(k-j)w(k/m)}{\sqrt{m}\sigma_h}\Big|\hatmu_{m+j+1}^{m+k}(h) - \hatmu_1^{m+j}(h) \Big|~,\\
\TmdZ(c)
:= \maxhd \max_{k=cm+1}^{Tm} \max_{j=0}^{k-cm-1} \frac{(k-j)w(k/m)}{\sqrt{m}}\Big|z_{m+j+1}^{m+k}(h) - z_1^{m+j}(h) \Big|~,\\
\hatTmdZ(c)
:= \maxhd \max_{k=cm+1}^{Tm} \max_{j=0}^{k-cm-1} \frac{(k-j)w(k/m)}{\sqrt{m}}\Big| \hat{z}_{m+j+1}^{m+k}(h) - \hat{z}_1^{m+j}(h) \Big|~,\\
\tildeTmdZ(c)
:= \maxhd \max_{k=cm+1}^{Tm} \max_{j=0}^{k-cm-1} \frac{(k-j)w(k/m)}{\sqrt{m}}\Big| \widetilde{z}_{m+j+1}^{m+k}(h) - \widetilde{z}_1^{m+j}(h) \Big|~.
\end{split}
\end{align}
Finally, recall the definition
\begin{align}\label{def:udx}
u_d(x) = x/a_d + b_d~,\qquad x \in \R~,
\end{align} 
with the sequences $a_d$ and $b_d$ given by \eqref{eq:adbd} with the adaptation $q=q(T)$.

\noindent The proof of Theorem \ref{thm:mainGumbel} is now split into the following five Lemmas. 
If these are proven, then the claim is a consequence of Theorem \ref{thm:gumbelWiener.dep}. 

\begin{lemma}[Truncation]
\label{lem:Truncation} 
For any sufficiently small constant $t_0>0$ and for fixed $x \in \R$ we have
\begin{align*}
\bigg| \Pb\Big(\Tmd \leq u_d(x) \Big)
- \Pb\Big(\Tmd (t_0)\leq u_d(x) \Big) \bigg| = o(1)\quad\mbox{as }m,d \to \infty~.
\end{align*}
\end{lemma}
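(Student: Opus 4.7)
The plan is to reduce the claim to a tail estimate on the ``near-diagonal'' part of $\Tmd$, then obtain that estimate by combining a simple variance computation with a Bernstein-type inequality for weakly dependent data.

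First, I would exploit an algebraic simplification of the summand defining $\Tmd$. A direct calculation gives
\begin{align*}
\frac{(k-j)\,w(k/m)}{\sqrt{m}\,\sigma_h}\Bigl|\hatmu_{m+j+1}^{m+k}(h)-\hatmu_1^{m+j}(h)\Bigr|
=\frac{\sqrt{m}}{\sigma_h}\Bigl|\frac{S_{m+k}^{(h)}}{m+k}-\frac{S_{m+j}^{(h)}}{m+j}\Bigr|\,,
\end{align*}
where $S_n^{(h)}:=\sum_{t=1}^n X_{t,h}$. Setting $U_k^h:=\sqrt{m}\,\sigma_h^{-1}S_{m+k}^{(h)}/(m+k)$, $\Tmd$ becomes the maximum over $h$ of the \emph{range} of $(U_0^h,\dots,U_{Tm}^h)$, and $\Tmd(t_0)$ is the same maximum restricted to pairs $(k,j)$ with $k-j>t_0 m$ (the condition $k>t_0 m$ in the definition of $\Tmd(t_0)$ is implied by $k-j>t_0 m$ and $j\geq 0$). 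Since $\Tmd\geq\Tmd(t_0)$ deterministically, it suffices to bound $\Pb(\Tmd^{\mathrm{excl}}(t_0)>u_d(x))$, where $\Tmd^{\mathrm{excl}}(t_0)$ denotes the same three-fold maximum but taken over the complementary near-diagonal set $\{0\leq j<k\leq Tm:\ k-j\leq t_0 m\}$.

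Second, under $H_0$ the long-run covariance structure yields
\begin{align*}
\Var\bigl(U_k^h-U_j^h\bigr)\;\approx\;\frac{m(k-j)}{(m+k)(m+j)}\;\leq\;t_0
\end{align*}
uniformly for $(k,j)$ in the near-diagonal region, so each summand in $\Tmd^{\mathrm{excl}}(t_0)$ is approximately sub-Gaussian with scale $\sqrt{t_0}$. This matches the intuition from the Brownian limit: in the time change $s\mapsto 1/(1+s)$, the near-diagonal region corresponds to increments of a Brownian motion over an interval of length $\lesssim t_0$, whose maximum over $d$ components is of order $\sqrt{t_0\log d}\ll b_d\sim\sqrt{2q(T)\log d}$ whenever $t_0<q(T)$.

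Third, I would combine the bounded exponential-moment condition (Assumption \ref{assump:RV}) with the geometric decay of the physical-dependence coefficients (Assumption \ref{assump:temporalDependence}) to derive a Bernstein-type inequality of the form
\begin{align*}
\Pb\bigl(|U_k^h-U_j^h|>\lambda\bigr)\;\lesssim\;\exp\bigl(-c\lambda^2/t_0\bigr)\,,
\end{align*}
valid uniformly for $(k,j)$ in the near-diagonal region and for $\lambda$ up to a polynomial threshold in $m$ that comfortably exceeds $u_d(x)\sim b_d$. A union bound over the at most $Tt_0 m^2 d$ triples $(k,j,h)$ and the polynomial-growth bound $m\leq C_D d^{C_D}$ from Assumption \ref{assump:model} then give
\begin{align*}
\Pb\bigl(\Tmd^{\mathrm{excl}}(t_0)>u_d(x)\bigr)\;\lesssim\;d^{\,2C_D+1-2cq(T)/t_0+o(1)}\,,
\end{align*}
which is $o(1)$ for any $t_0<2cq(T)/(2C_D+1)$; any such choice of $t_0$ proves the lemma.

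The main obstacle lies in the tail step: the naive Bernstein inequality for weakly dependent sub-exponential variables carries a linear correction of the form $\exp(-\lambda/B_m)$ with $B_m\leq m^B$, $B<3/8$, which on the relevant scale $\lambda\sim\sqrt{\log d}$ is not small enough to defeat the $O(m^2 d)$ union-bound terms. The standard fix is to first truncate each $e_{t,h}$ at level $\tau_m\asymp B_m\log^\kappa m$, apply Bernstein to the bounded truncated variables, and absorb the truncation residual through the exponential-moment assumption; this is the same device employed in the subsequent Gaussian-approximation step (cf.\ \cite{Chernozhukov2013}, \cite{Zhang2018b}). Once it is in place, the union-bound calculation above closes the argument.
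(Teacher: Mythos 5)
Your argument is correct and viable, but it takes a different technical route than the paper. The algebraic reduction to $U_k^h-U_j^h=\sqrt{m}\,\sigma_h^{-1}(S_{m+k}/(m+k)-S_{m+j}/(m+j))$ is exactly the identity the paper exploits later (in the proof of Lemma~\ref{lem:discretize}) to connect the discrete statistic to Brownian increments; using it here makes the ``near-diagonal $=$ small Brownian increment'' intuition precise and gives a clean variance bound $\lesssim t_0$. Where you diverge is the tail estimate: the paper does not truncate. Instead it applies the triangle inequality to split $\hatmu_{m+j+1}^{m+k}(h)-\hatmu_1^{m+j}(h)$ and invokes the Nagaev-type maximal inequality for physically dependent partial sums (Lemma~\ref{lem:nagaev}, from \cite{Jirak2015}/\cite{Liu2013}), which simultaneously delivers a polynomial tail $C_p\, n/x^p$ and a Gaussian tail $C_p\exp(-c_p x^2/n)$. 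The polynomial tail is controlled by $p>2D+4$ (Assumption~\ref{assump:temporalDependence}); the Gaussian tail is defeated by choosing $t_0$ small enough relative to $C_D$, just as in your last display. Your proposal replaces the Nagaev step by truncation at $\tau_m\asymp B_m\log^\kappa m$ plus a Bernstein bound for bounded, physically dependent sums. That route closes as well (you correctly verify that the variance proxy is $\lesssim t_0$, that the boundedness scale $\tau_m/\sqrt m\ll m^{-1/8}$ keeps you in the sub-Gaussian regime for $\lambda\asymp\sqrt{\log d}$, and that the truncation residual decays super-polynomially under Assumption~\ref{assump:RV}); however, it asks you to establish, rather than simply cite, a Bernstein inequality under physical dependence, and you still need a small step to recenter the truncated variables (the resulting bias is $\lesssim B_m e^{-\log^\kappa m}$, negligible but not automatic). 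In short: same variance computation, same union-bound/polynomial-growth arithmetic, but the paper gets its tail bound off the shelf from Nagaev, while you reconstruct a purely Gaussian tail via truncation. Either closes the lemma for $t_0$ small enough.
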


\begin{lemma}[Gaussian approximation]
\label{lem:gaussianapprox}
For $t_0>0$ it holds that
\begin{align*}
\sup_{x \in \R} \bigg|\Pb \big( \Tmd(t_0)  \leq x \big) - \Pb \big( \TmdZ(t_0) \leq x \big) \bigg| = o(1)\quad\mbox{as }m,d \to \infty~.
\end{align*}
\end{lemma}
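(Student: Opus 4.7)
The plan is to apply a high-dimensional Gaussian approximation theorem for partial sums of time series under physical dependence, most naturally the main theorem of \cite{Zhang2018b} (see also \cite{Chernozhukov2013, Chernozhukov2019}). The truncation at level $t_0$ in the previous lemma is precisely what makes the coefficients appearing in the statistic uniformly bounded, so that such a result is applicable.

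First, I would vectorise both statistics to bring them into the canonical form required by Gaussian approximation. For each triple $\ell=(k,j,h)$ in the index set $\mathcal{L}_{t_0}=\{(k,j,h):t_0m+1\leq k\leq Tm,\,0\leq j\leq k-t_0m-1,\,1\leq h\leq d\}$, use the identity
\begin{align*}
(k-j)\bigl(\hatmu_{m+j+1}^{m+k}(h)-\hatmu_1^{m+j}(h)\bigr)
=\sum_{t=m+j+1}^{m+k} X_{t,h}-\frac{k-j}{m+j}\sum_{t=1}^{m+j} X_{t,h}
\end{align*}
to write the summand as $\sum_{t=1}^{m+Tm} c_{\ell,t}X_{t,h}/(\sqrt{m}\sigma_h)$ for explicit, bounded coefficients $c_{\ell,t}$. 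Because $k\geq t_0 m$ and $m+j\geq m$, the ratio $(k-j)/(m+j)$ and the weight $w(k/m)=(1+k/m)^{-1}$ lie in a fixed compact set. Resolving the absolute values by the trick $|y|=\max(y,-y)$ doubles the index and gives
\begin{align*}
\Tmd(t_0)=\max_{\tilde\ell\in\tilde{\mathcal{L}}_{t_0}}\frac{1}{\sqrt{m}}\sum_{t=1}^{m+Tm} \tilde c_{\tilde\ell,t}\,X_{t,h_{\tilde\ell}}/\sigma_{h_{\tilde\ell}},
\end{align*}
with $|\tilde{\mathcal{L}}_{t_0}|=O(dm^2)$. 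The same algebraic manipulation represents $\TmdZ(t_0)$ as the corresponding maximum of linear combinations of the Gaussian array $\{Z_{t,h}\}$.

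Second, I would verify the hypotheses of the Zhang--Cheng Gaussian approximation. Assumption \ref{assump:D1} ensures $\log(d\,m^2)\asymp\log m$ is of polynomial order in $\log m$, compatible with the required growth of the effective dimension; Assumption \ref{assump:S1} delivers the uniform sub-exponential tail bound; Assumption \ref{assump:TD1} provides the geometric decay of the physical dependence coefficients $\vartheta_{t,h,p}$, hence summability of the required temporal dependence measures for the moment order $p>2D+4$; finally Assumption \ref{assump:TD2} uniformly lower-bounds the normalising constants $\sigma_h$ so that the linearised vectors cannot degenerate. Together these yield the Kolmogorov bound
\begin{align*}
\sup_{x\in\R}\Bigl|\Pb\bigl(\Tmd(t_0)\leq x\bigr)-\Pb\Bigl(\max_{\tilde\ell\in\tilde{\mathcal{L}}_{t_0}} \tfrac{1}{\sqrt{m}}\sum_{t=1}^{m+Tm}\tilde c_{\tilde\ell,t}\,\xi_{t,h_{\tilde\ell}}\leq x\Bigr)\Bigr|=o(1),
\end{align*}
where $\{\xi_{t,h}\}$ is a centred Gaussian array having the same long-run covariance structure (in time and across components) as $\{X_{t,h}/\sigma_h\}$.

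Third, it remains to argue that the approximating Gaussian max has the same distribution as $\TmdZ(t_0)$. Since the non-zero coefficients $\tilde c_{\tilde\ell,t}$ are constant on each of the blocks $[1,m+j]$ and $[m+j+1,m+k]$, the covariance between two coordinates $\tilde\ell_1,\tilde\ell_2$ of the linear combinations depends, up to $o(1)$ terms vanishing with $m$, only on the overlap of these blocks and on the long-run covariance $\gamma_{h_{\tilde\ell_1},h_{\tilde\ell_2}}/(\sigma_{h_{\tilde\ell_1}}\sigma_{h_{\tilde\ell_2}})=\rho_{h_{\tilde\ell_1},h_{\tilde\ell_2}}$. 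This is exactly the covariance structure produced when the same coefficients act on the temporally independent Gaussian array $\{Z_{t,h}\}$ with spatial covariance $\rho_{h,i}$, so the two Gaussian maxima coincide in distribution. A Gaussian comparison inequality (as in \cite{Chernozhukov2013}) quantifies the effect of replacing the exact long-run covariances by their finite-sample partial-sum analogues and absorbs the residual $o(1)$ into the Kolmogorov bound.

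The main obstacle I anticipate is the bookkeeping needed to verify the quantitative moment, dependence and dimension conditions of the cited Gaussian approximation theorem uniformly over the polynomially-sized index set $\tilde{\mathcal{L}}_{t_0}$, together with the covariance-matching step: one must control the deviation between finite-window covariances of the $X$'s and their long-run limits $\gamma_{h,i}$, since the Gaussian surrogates $Z_{t,h}$ are calibrated to the latter. Without the prior truncation of Lemma \ref{lem:Truncation}, the coefficients $(k-j)/(m+j)$ and $w(k/m)$ would be unbounded near $k=0$, breaking the uniform control that Gaussian approximation requires; this is the technical reason the two lemmas must be chained in this order.
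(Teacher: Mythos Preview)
Your proposal is correct and follows essentially the same route as the paper: vectorise over $(k,j,h)$, double the index to resolve absolute values, use the truncation at $t_0$ to uniformly bound the coefficients $a_{t,m,k,j}$, apply Corollary~2.2 of \cite{Zhang2018b} to pass to a Gaussian vector with matching covariance, and then use the Gaussian comparison inequality of \cite{Chernozhukov2013} to replace this by the $Z$-based statistic after computing that the two covariance structures differ by $O(1/m)$. The only cosmetic difference is that the paper introduces the intermediate Gaussian vector $\dot{\bm V}^{(z)}$ (with the exact covariance of $\bm V$) explicitly and treats the two steps in the opposite order, rather than describing the Zhang--Cheng output as an array $\{\xi_{t,h}\}$; your covariance-matching remark in the third paragraph is exactly the content of the paper's Step~1.
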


\begin{lemma}[Relaxation of correlation structure]
\label{lem:relaxCorr}
For $t_0>0$ it holds that
\begin{align*}
\sup_{x \in \R} \bigg|\Pb \big( \tildeTmdZ(t_0)  \leq x \big) - \Pb \big( \TmdZ(t_0) \leq x \big) \bigg| = o(1)\quad\mbox{as }m,d \to \infty~.
\end{align*}
\end{lemma}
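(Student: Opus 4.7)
The plan is to realize both statistics as maxima of linear combinations of the underlying Gaussian vectors and apply a Gaussian comparison theorem of Chernozhukov-Chetverikov-Kato (CCK) type to bound the Kolmogorov distance between the two laws. Using the identity $|y|=\max(y,-y)$, I would rewrite
\begin{align*}
\TmdZ(t_0) = \max_{\ell \in \mathcal{I}} Y_\ell, \qquad \tildeTmdZ(t_0) = \max_{\ell \in \mathcal{I}} \widetilde Y_\ell,
\end{align*}
where $\mathcal{I}$ is a finite index set of cardinality $p\lesssim d\,(Tm)^2$ (the factor $2$ from the $\pm$ trick is absorbed), and each $Y_\ell$ (resp.\ $\widetilde Y_\ell$) is a centered Gaussian linear combination of $\{Z_{t,h}\}$ (resp.\ $\{\tildeZ_{t,h}\}$). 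Since $\{\bm{Z}_t\}_t$ and $\{\bm{\tilde Z}_t\}_t$ are independent across $t$, the joint covariances factor neatly: for $\ell = (h,k,j,\varepsilon)$, $\ell' = (i,k',j',\varepsilon')$,
\begin{align*}
\Cov(Y_\ell, Y_{\ell'}) = \rho_{h,i}\, A_{k,j,k',j'}, \qquad \Cov(\widetilde Y_\ell, \widetilde Y_{\ell'}) = \tilderho_{h,i}\, A_{k,j,k',j'},
\end{align*}
where $A_{k,j,k',j'}$ is a scalar arising from the overlaps of the CUSUM summation windows, the same for both processes, and satisfies $|A_{k,j,k',j'}| \leq C$ uniformly because $w(k/m)\leq 1$, $k-j\leq Tm$ and $m+j\geq m$.

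Next, I bound the maximum covariance discrepancy. Using $\rho_{h,i} - \tilderho_{h,i} = \rho_{h,i}\, I\{|h-i| > L_d\}$ together with Assumption (SD1), which for $|h-i| \ge 2$ gives $|\rho_{h,i}| \le (\log|h-i|)^{-2}r_{|h-i|}$, and using $L_d = d^\Delta$ with $\Delta$ from the statement of Theorem \ref{thm:gumbelWiener.dep}, I obtain
\begin{align*}
\delta_{\mathrm{cov}} := \max_{\ell, \ell'} \big|\Cov(Y_\ell, Y_{\ell'}) - \Cov(\widetilde Y_\ell, \widetilde Y_{\ell'})\big| \le \frac{C\, r_{L_d}}{(\log L_d)^2} \le \frac{C\, r_{d^\Delta}}{(\log d)^2}.
\end{align*}
A direct computation further yields $\Var(Y_\ell) = \tfrac{(k-j) w(k/m)^2}{m} + \tfrac{(k-j)^2 w(k/m)^2}{m(m+j)}$, and the truncation $k-j \geq t_0 m+1$ built into $\TmdZ(t_0)$ enforces the uniform lower bound $\Var(Y_\ell) \ge t_0/(T+1)^2$ and upper bound $T+T^2$; the same holds for $\widetilde Y_\ell$.

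Having checked these hypotheses, applying the Gaussian comparison bound (Theorem 2 of \cite{Chernozhukov2013}) gives
\begin{align*}
\sup_{x\in\R}\big|\Pb(\TmdZ(t_0) \le x) - \Pb(\tildeTmdZ(t_0) \le x)\big| \lesssim (\log p)^{2/3}\, \delta_{\mathrm{cov}}^{1/3}.
\end{align*}
By Assumption (D1) the dimension $d$ grows at most polynomially in $m$, so $\log p \lesssim \log(d(Tm)^2) \lesssim \log d$, and the right-hand side is
\begin{align*}
\lesssim (\log d)^{2/3} \bigg(\frac{r_{d^\Delta}}{(\log d)^2}\bigg)^{1/3} = r_{d^\Delta}^{1/3} \longrightarrow 0,
\end{align*}
since $r_m \to 0$ by Assumption (SD1). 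The main obstacle is verifying the uniform lower bound on the variances: this is precisely why the truncation Lemma \ref{lem:Truncation} must be proved first, since without the constraint $k-j \ge t_0 m$ the short-window CUSUM terms produce variances of order $1/m$ and the CCK comparison rate would degenerate. The remaining work is bookkeeping: decomposing the CUSUM differences into four sums, noting that temporal independence of $\bm{Z}_t$ and $\bm{\tilde{Z}}_t$ reduces all covariance computations to the single spatial scalar $\rho_{h,i}$ (resp.\ $\tilderho_{h,i}$) multiplied by a common overlap factor $A_{k,j,k',j'}$, and checking that $|A_{k,j,k',j'}|$ is uniformly $O(1)$ via the boundedness of $w$.
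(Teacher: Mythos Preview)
Your proposal is correct and follows essentially the same route as the paper: both vectorize the truncated statistics via the $\pm$ trick, observe that the covariances of the resulting Gaussian vectors factor as $\rho_{h,i}$ (resp.\ $\tilderho_{h,i}$) times a common overlap coefficient bounded by a constant depending only on $T$, bound the maximal covariance discrepancy by $\sup_{|h-i|>L_d}|\rho_{h,i}|\lesssim (\log L_d)^{-2}r_{L_d}=o((\log d)^{-2})$ via (SD1), and then apply the Gaussian comparison inequality (Lemma~\ref{lem:ineqcherno2}, i.e.\ Lemma~3.1 of \cite{Chernozhukov2013}). The only cosmetic difference is that the paper keeps the $\log(d_{\bm V}/\Delta_m)$ factor explicit rather than simplifying it to $\log p$, but the resulting $o(1)$ conclusion is identical.
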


\begin{lemma}[Discretization of limit process]
\label{lem:discretize}
For $t_0>0$ and fixed $x \in \R$ it holds that
\begin{align*}
\bigg|\Pb \big( \tildeTmdZ(t_0) \leq u_d(x) \big) - \Pb \big( \mathcal{W}_d(t_0)  \leq u_d(x)  \big) \bigg|
= o(1)\quad\mbox{as }m,d \to \infty~.
\end{align*}
\end{lemma}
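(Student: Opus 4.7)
The plan is to embed $\tildeTmdZ(t_0)$ into the very same family of Brownian motions that defines $\mathcal{W}_d(t_0)$, so that the two quantities differ only through a discretization error, and then transfer that closeness to the CDFs via an anti-concentration argument using Theorem \ref{thm:gumbelWiener.dep}.

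First I would exploit that $\{\tildeZ_{t,h}\}$ is exactly i.i.d. Gaussian in time with spatial covariance $\tilderho_{h,i}$, and write $\tildeZ_{t,h} = \tilde B_h(t) - \tilde B_h(t-1)$ for a vector Brownian motion $(\tilde B_h)_{h=1}^d$ with $\Cov(\tilde B_h(u),\tilde B_i(u)) = u \tilderho_{h,i}$. Setting $S_n(h) = \sum_{t=1}^n \tildeZ_{t,h} = \tilde B_h(n)$ and performing the same partial-sum manipulation as in the proof of Lemma \ref{lem:Truncation}, one obtains
\begin{align*}
\frac{(k-j)w(k/m)}{\sqrt m}\bigl(\widetilde z_{m+j+1}^{m+k}(h) - \widetilde z_1^{m+j}(h)\bigr)
= U_h(k/m) - U_h(j/m),
\end{align*}
where $U_h(s) := \tilde B_h^{(m)}(1+s)/(1+s)$ with $\tilde B_h^{(m)}(u) := \tilde B_h(mu)/\sqrt m$ again a Brownian motion carrying the cross-covariances $\tilderho_{h,i}$. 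A direct computation shows that $\{U_h(s)-U_h(0)\}_{s \ge 0}$ is Gaussian with independent increments and $\Var(U_h(s)-U_h(0)) = q(s)$, so $W_h(v) := U_h(q^{-1}(v)) - U_h(0)$, $v\in[0,\qT]$, is precisely a Brownian motion with cross-covariance $\tilderho_{h,i}$, i.e.\ the one featuring in $\mathcal{W}_d(t_0)$. Consequently,
\begin{align*}
\tildeTmdZ(t_0) = \maxhd \max_{k=t_0 m+1}^{Tm} \max_{j=0}^{k-t_0 m-1} \bigl| W_h\bigl(q(k/m)\bigr) - W_h\bigl(q(j/m)\bigr) \bigr|,
\end{align*}
which is a Riemann-type discretization of $\mathcal{W}_d(t_0)$ on a grid of mesh $O(1/m)$ in both arguments.

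The second step is to control the discretization error $\Delta_{m,d} := |\tildeTmdZ(t_0) - \mathcal{W}_d(t_0)|$. Using Lipschitz continuity of $q$ on $[0,T]$ and the triangle inequality,
\begin{align*}
\Delta_{m,d} \le 2\,\maxhd\,\sup_{\substack{s,t\in[0,\qT]\\ |s-t|\le C/m}} |W_h(t)-W_h(s)|.
\end{align*}
A standard Gaussian-tail estimate for Brownian increments on a cover of $[0,\qT]$ by $O(m)$ intervals of length $C/m$, combined with a union bound over $h=1,\dots,d$, yields $\Pb(\Delta_{m,d} > \vep_m) = o(1)$ whenever $\vep_m \sqrt{m/\log(md)} \to \infty$.

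Finally I would close the argument via anti-concentration:
\begin{align*}
\bigl|\Pb(\tildeTmdZ(t_0) \le u_d(x)) - \Pb(\mathcal{W}_d(t_0) \le u_d(x))\bigr|
\le \Pb(\Delta_{m,d} > \vep_m) + \Pb(|\mathcal{W}_d(t_0) - u_d(x)| \le \vep_m).
\end{align*}
By Theorem \ref{thm:gumbelWiener.dep}, $a_d(\mathcal{W}_d(t_0) - b_d) \convd G$, so for any $\vep_m$ with $a_d \vep_m \to 0$ the Portmanteau theorem gives $\Pb(|\mathcal{W}_d(t_0) - u_d(x)| \le \vep_m) \to \Pb(G=x) = 0$. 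Under Assumption \ref{assump:D1} we have $a_d = O(\sqrt{\log m})$, so the constraints $\vep_m \gg \sqrt{\log(md)/m}$ and $\vep_m \ll 1/\sqrt{\log d}$ are simultaneously feasible (e.g.\ $\vep_m = m^{-1/3}$), and the claim follows. The main technical obstacle is the last step: one needs the anti-concentration bound to hold uniformly in $d$ at the moving level $u_d(x)$. Since Theorem \ref{thm:gumbelWiener.dep} only supplies convergence in distribution without a rate, this uniformity has to be extracted either from the Portmanteau argument above (which suffices since $a_d\vep_m = o(1)$) or, more robustly, from direct Gaussian anti-concentration estimates for suprema of the Brownian motions $W_h$ carried through the maximum over $h$.
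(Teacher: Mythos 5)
Your embedding of $\tildeTmdZ(t_0)$ as a grid-discretization of the processes $W_h$ and your control of the discretization error $\Delta_{m,d}$ match the paper's strategy and are correct. The gap lies in the anti-concentration step: you invoke Theorem \ref{thm:gumbelWiener.dep} for $\mathcal{W}_d(t_0)=\maxhd M_h(t_0)$, but that theorem establishes the Gumbel limit for $\mathcal{W}_d=\maxhd M_h$, the \emph{un}truncated ranges. The truncated random variable $M_h(t_0)$ defined in \eqref{def:Mhc} has a different (lighter) tail than $M_h$, and its domain-of-attraction analysis is simply not available; establishing it would require redoing the tail computation of Lemma \ref{lem:gumbel} from scratch for $M_h(t_0)$. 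Without that, the Portmanteau/Polya argument for $\Pb\big(|\mathcal{W}_d(t_0)-u_d(x)|\le\vep_m\big)\to 0$ cannot be justified, and the alternative you mention---direct anti-concentration for a supremum of a continuous Gaussian process, uniformly in $d$---is a genuinely harder problem than the finite-dimensional Lemma \ref{lem:ineqcherno2}/\ref{lem:ineqcherno1} and is not supplied by the paper's toolbox.

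The paper avoids the issue by two observations. First, the discrete statistic satisfies the pointwise inequality $M_{h,m}(t_0)\le M_h(t_0)$, hence $\Pb\big(\tildeTmdZ(t_0)\le u_d(x)\big)\ge\Pb\big(\mathcal{W}_d(t_0)\le u_d(x)\big)$ and the difference is a nonnegative quantity that need only be bounded from above. Second, in the inequality
\begin{align*}
\bigl|\Pb(A \le u) - \Pb(B \le u)\bigr| \le \Pb(|A-B|>\vep) + \Pb(|A-u|\le\vep)
\end{align*}
one is free to anti-concentrate around \emph{either} $A$ or $B$; the paper chooses $A=\tildeTmdZ(t_0)$, which is a maximum of $d_{\bm{V}}<\infty$ jointly Gaussian random variables with variances bounded above and below uniformly in $m,d$ (as verified in the proof), so Lemma \ref{lem:ineqcherno1} applies directly. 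Your choice of $B=\mathcal{W}_d(t_0)$ puts the burden on the continuous process and cannot be closed with the available results. Switching the anti-concentration to the discrete statistic repairs your argument and recovers the paper's proof.
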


\begin{lemma}[Removing truncation]\label{lem:removeTrunc}
For fixed $x \in \R$ and any sufficiently small constant $t_0>0$ it holds that
\begin{align*}
\bigg|\Pb \big(  \mathcal{W}_d(t_0) \leq u_d(x) \big) - \Pb \big( \mathcal{W}_d  \leq u_d(x)  \big)\bigg|
= o(1)\quad\mbox{as }m,d \to \infty~.
\end{align*}
\end{lemma}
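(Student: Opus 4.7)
The plan is to combine the monotonicity $\mathcal{W}_d(t_0)\le\mathcal{W}_d$ with a union bound over components, thereby reducing matters to a single Brownian motion. Since $M_h(t_0)\le M_h$ for every $h$, $\{\mathcal{W}_d\le u_d(x)\}\subseteq\{\mathcal{W}_d(t_0)\le u_d(x)\}$, so
\begin{align*}
\bigl|\Pb(\mathcal{W}_d(t_0)\le u_d(x))-\Pb(\mathcal{W}_d\le u_d(x))\bigr|
=\Pb\bigl(\mathcal{W}_d(t_0)\le u_d(x)<\mathcal{W}_d\bigr)
\le d\cdot\Pb\bigl(M_1(t_0)\le u_d(x)<M_1\bigr),
\end{align*}
the last step using that each $W_h$ is marginally a standard Brownian motion on $[0,\qT]$, so all $M_h$ (respectively all $M_h(t_0)$) share a common marginal law.

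Next, I would decompose according to where the argmax of $M_1$ lies. By compactness there exists $(s^*,t^*)$ with $0\le s^*\le t^*\le\qT$ and $|W_1(t^*)-W_1(s^*)|=M_1$. On $\{M_1>u_d(x),\,M_1(t_0)\le u_d(x)\}$ this pair must lie outside the admissible index set of $M_1(t_0)$, so either (a) $t^*<q(t_0)$, which forces $\mathrm{range}_{[0,q(t_0)]}(W_1)>u_d(x)$, or (b) $t^*\ge q(t_0)$ but $q^{-1}(t^*)-q^{-1}(s^*)<t_0$. Since $t-s=(1-t)(1-s)\bigl(q^{-1}(t)-q^{-1}(s)\bigr)$ with $(1-t)(1-s)\le 1$, case (b) forces $t^*-s^*<t_0$ and is hence contained in $\{\sup_{s,t\in[0,\qT],\,|t-s|\le t_0}|W_1(t)-W_1(s)|>u_d(x)\}$.

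For (a), the tail formula \eqref{eq:F0} rescaled from $[0,1]$ to $[0,q(t_0)]$ gives $\Pb(\mathrm{range}_{[0,q(t_0)]}(W_1)>u)\sim C\,u^{-1}\exp(-u^2/(2q(t_0)))$; substituting $u_d(x)^2\sim 2\qT\log d$ yields a probability of order $d^{-\qT/q(t_0)}(\log d)^{-1/2}$, so $d$ times this bound is $o(1)$ whenever $t_0<T$. For (b), I would cover $[0,\qT]$ by the $\lceil\qT/t_0\rceil+1$ intervals $J_k=[kt_0,(k+2)t_0]$ of length $2t_0$; any pair $(s,t)$ with $s\le t\le s+t_0$ lies in some $J_k$, and the triangle inequality gives $|W_1(t)-W_1(s)|\le 2\sup_{r\in J_k}|W_1(r)-W_1(kt_0)|$. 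The reflection principle yields $\Pb\bigl(\sup_{[0,2t_0]}|\widetilde W|>u_d(x)/2\bigr)\le 4\exp\bigl(-u_d(x)^2/(16t_0)\bigr)$ for a standard Brownian motion $\widetilde W$, and summing over the $O(1/t_0)$ intervals produces a bound of order $t_0^{-1}d^{-\qT/(8t_0)}$, so $d$ times this is $o(1)$ provided $t_0<\qT/8$. Combining (a) and (b) proves the lemma for every such $t_0$. The one real technical obstacle is ensuring in (b) that the modulus-of-continuity exponent $\qT/(8t_0)$ strictly exceeds $1$, which produces the quantitative smallness requirement on $t_0$; since the statement only asks for existence of some sufficiently small $t_0$, the crude reflection bound is enough, although a sharper estimate (e.g.\ via L\'evy's modulus of continuity or Borell--TIS) would relax the threshold.
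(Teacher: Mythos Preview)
Your proof is correct and follows essentially the same architecture as the paper's: monotonicity $\mathcal{W}_d(t_0)\le\mathcal{W}_d$, a union bound over the $d$ components, and a modulus-of-continuity estimate via an overlapping cover of $[0,\qT]$ by intervals of length $2t_0$. The paper is slightly more economical in two places: it absorbs your case~(a) into the single modulus-of-continuity bound (using $q(t_0)\le t_0$, so the whole range over $[0,q(t_0)]$ is already a short-lag oscillation), and instead of the triangle-inequality-plus-reflection bound it rescales the range over $[0,2t_0]$ back to $[0,\qT]$ and applies the exact tail asymptotic of $F_{\mathbb{M}}$, which yields the milder threshold $t_0<\qT/2$ rather than your $t_0<\qT/8$.
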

\bigskip

\begin{proof}[Proof of Lemma \ref{lem:Truncation}]
First, note that 
\begin{align*}
\Tmd
&= \maxhd \max_{k=1}^{Tm} \max_{j=0}^{k-1}  \frac{(k-j)w(k/m)}{\sqrt{m}\sigma_h}\Big|\hatmu_{m+j+1}^{m+k}(h) - \hatmu_1^{m+j}(h) \Big|\\
&=  \max \bigg\{\Tmd(t_0)~,  \maxhd \max_{k=t_0m+1}^{Tm} \max_{j=k-t_0m}^{k-1} \frac{(k-j)w(k/m)}{\sqrt{m}\sigma_h}\Big|\hatmu_{m+j+1}^{m+k}(h) - \hatmu_1^{m+j}(h) \Big|~,\\
&\hspace{4.7cm}\max_{h=1}^d\max_{k=1}^{t_0m} \max_{j=0}^{k-1} \frac{(k-j)w(k/m)}{\sqrt{m}\sigma_h}\Big|\hatmu_{m+j+1}^{m+k}(h) - \hatmu_1^{m+j}(h) \Big|\bigg\}~.
\end{align*}
Hence, we obtain
\begin{align}\label{ineq:trunc1}
\Big|\Pb\Big(\Tmd(t_0) \leq u_d(x) \Big) - \Pb\Big(\Tmd \leq u_d(x) \Big)\Big|
\leq P_1(x) + P_2(x)~,
\end{align}
where
\begin{align*}
P_1(x) &= \Pb\Big( \maxhd \max_{k=t_0m+1}^{Tm} \max_{j=k-t_0m}^{k-1} \frac{(k-j)}{\sqrt{m}\sigma_h}\Big|\hatmu_{m+j+1}^{m+k}(h) - \hatmu_1^{m+j}(h) \Big|\geq u_d(x)\Big)~,\\
P_2(x) &= \Pb\Big(\max_{h=1}^d\max_{k=1}^{t_0m} \max_{j=0}^{k-1} \frac{(k-j)}{\sqrt{m}\sigma_h}\Big|\hatmu_{m+j+1}^{m+k}(h) - \hatmu_1^{m+j}(h) \Big|\geq u_d(x)\Big)~.
\end{align*}
and we additionally used that $w(k/m)\leq 1$.
We will treat the summands on the right-hand side of the last display separately.
For the term $P_1(x)$ note that  
\begin{align}\label{eq:trunc2}
\begin{split}
& \Pb\left(\max_{k=t_0m+1}^{Tm} \max_{j=k-t_0m}^{k-1} \frac{(k-j)}{\sqrt{m}\sigma_h}\Big|\hatmu_{m+j+1}^{m+k}(h) - \hatmu_1^{m+j}(h) \Big|\geq u_d(x)\right)\\
&\qquad\qquad\leq \;\Pb \left(\max_{k=t_0m+1}^{Tm} \max_{j=k-t_0m}^{k-1} \frac{(k-j)}{\sqrt{m}\sigma_h}\Big|\hatmu_{m+j+1}^{m+k}(h) \Big|\geq \frac{u_d(x)}{2}\right)\\
&\qquad\qquad\qquad + \Pb\left(\max_{k=t_0m+1}^{Tm} \max_{j=k-t_0m}^{k-1} \frac{(k-j)}{\sqrt{m}\sigma_h}\Big|\hatmu_1^{m+j}(h) \Big|\geq \frac{u_d(x)}{2}\right)~.
\end{split}
\end{align}
Using stationarity and Assumption \ref{assump:temporalDependence} \eqref{assump:TD2}, we have 
\begin{align*}
\Pb\Big(&\max_{k=t_0m+1}^{Tm} \max_{j=k-t_0m}^{k-1} \frac{(k-j)}{\sqrt{m}\sigma_h}\Big|\hatmu_{m+j+1}^{m+k}(h)  \Big|\geq \frac{u_d(x)}{2}\Big)\\
&\leq \sum_{k=t_0m+1}^{Tm} \Pb \left(\max_{j=k-t_0m}^{k-1}\Big|\sum_{i=m+j+1}^{m+k}X_{i,h}  \Big|\geq \frac{\sqrt{m}c_\sigma u_d(x)}{2}\right).
\end{align*}
Observing \eqref{eq:adbd} and Lemma \ref{lem:nagaev}, we obtain the following bound for the last display, which holds uniformly for $1\leq h\leq d$
\begin{align*}
C_p\frac{Tt_0m^{2-p/2}}{c_\sigma^p u_d(x)^p} + C_p Tm\exp\left(-c_p\frac{c_\sigma^2 u_d(x)^2}{4t_0}\right)
\lesssim \frac{m^{2-p/2}}{(\log(d))^{p/2}} + md^{-{\tilde C_p}/{t_0}}~,
\end{align*}
where $\tilde C_p>0$ is a sufficiently small constant.
The second summand on the right-hand side of \eqref{eq:trunc2} can be estimated similarly, that is 
\begin{align*}
\Pb \Big(&\max_{k=t_0m+1}^{Tm} \max_{j=k-t_0m}^{k-1} \frac{(k-j)}{\sqrt{m}\sigma_h}\Big|\hatmu_1^{m+j}(h) \Big|\geq \frac{u_d(x)}{2}\Big)
\leq \Pb\left( \max_{j=1}^{Tm+m}\Big|\sum_{i=1}^{j}X_{i,h} \Big|\geq \frac{c_\sigma\sqrt{m}}{t_0}\frac{u_d(x)}{2}\right)\\
&\lesssim C_p\frac{t_0^p(T+1)m^{1-p/2}}{c_\sigma^p u_d(x)^p}+C_p\exp\left( - c_p\frac{c_\sigma^2 u_d(x)^2}{4t_0^2(T+1)}\right)
\lesssim \frac{m^{1-p/2}}{(\log(d))^{p/2}}+d^{-{\tilde C_p}/{t_0^2}}~,
\end{align*}
where $\tilde C_p>0$ is again a sufficiently small constant.
Hence, we obtain by Assumption  \ref{assump:model}, \eqref{eq:trunc2} (observing $p>2D+4$) that  
\begin{align}\label{eq:trunc3}
P_1(x) \lesssim &~\frac{dm^{2-p/2}}{(\log(d))^{p/2}}+md^{1-\frac{\tilde C_p}{t_0}}
\lesssim \frac{m^{D+2-p/2}}{(\log(d))^{p/2}} + m^{1+\left(1-{\tilde C_p}/{t_0}\right)/C_D}=o(1)
\end{align}
if $t_0>0$  is chosen sufficiently small.
Analogously, we obtain for the second summand on the right-hand side of \eqref{ineq:trunc1} with a possibly smaller 
constant $t_0>0$, that
\begin{align}\label{eq:trunc4}
P_2(x) = o(1)~,
\end{align}
where we have used the   following two inequalities which are a consequence of  Lemma \ref{lem:nagaev}
\begin{align*}
\Pb &\left(\max_{k=1}^{t_0m} \max_{j=0}^{k-1} \frac{(k-j)}{\sqrt{m}\sigma_h}\Big|\hatmu_{m+j+1}^{m+k}(h) \Big|\geq \frac{u_d(x)}{2}\right)
\leq \sum_{k=1}^{t_0m} \Pb \left(\max_{j=0}^{k-1}\Big|\sum_{i=m+j+1}^{m+k}X_{i,h}  \Big|\geq \sqrt{m}c_\sigma\frac{u_d(x)}{2}\right)\\
&\leq \sum_{k=1}^{t_0m} \Pb \left(\max_{j=0}^{t_0m-1}\Big|\sum_{i=m+j+1}^{m+t_0m}X_{i,h}  \Big|\geq \sqrt{m}c_\sigma\frac{u_d(x)}{2}\right)\\
&\lesssim C_p\frac{t_0^2m^{2-p/2}}{c_\sigma^p(\log(d))^{p/2}}+C_pt_0m\exp\left(-c_p\frac{c_\sigma^2u_d(x)^2}{4t_0}\right)
\lesssim \frac{m^{2-p/2}}{(\log(d))^{p/2}}+md^{-{\tilde C_p}/{t_0}} 
\end{align*}
and
\begin{align*}
\Pb &\bigg(\max_{k=1}^{t_0m}\max_{j=0}^{k-1} \frac{(k-j)}{\sqrt{m}\sigma_h}\Big|\hatmu_1^{m+j}(h) \Big|\geq \frac{u_d(x)}{2}\bigg)
\leq ~\Pb \bigg( \max_{j=1}^{t_0m+m}\Big|\sum_{i=1}^{j}X_{i,h} \Big|\geq \frac{\sqrt{m}c_\sigma}{t_0}\frac{u_d(x)}{4}\bigg)\\
&\lesssim C_p \frac{(t_0+1)t_0^pm^{1-p/2}}{c_\sigma^p(\log(d))^{p/2}}+C_p\exp\left(-c_p\frac{c_\sigma^2 u_d(x)^2}{4(t_0+1)t_0^2(T+1)}\right)
\lesssim \frac{m^{1-p/2}}{(\log(d))^{p/2}}+d^{-\frac{\tilde C_p}{(t_0+1)t_0^2}}~.
\end{align*}
Combining \eqref{eq:trunc3} and \eqref{eq:trunc4} the assertion of Lemma  \ref{lem:Truncation} now follows from   \eqref{ineq:trunc1}.

\medskip

\begin{proof}[Proof of Lemma \ref{lem:gaussianapprox}]
We will use a  Gaussian Approximation provided in Corollary 2.2 of \cite{Zhang2018b}.
For this purpose we introduce the notation
\begin{align*}
v_{m,k,j,h} := \dfrac{(k-j)w(k/m)}{\sigma_h\sqrt{m}}  \Big( \hatmu_{m+j+1}^{m+k}(h) - \hatmu_1^{m+j}(h) \Big) ~,
\end{align*}
with $k=t_0m+1,\dots, Tm $; $j=0\dots,k-t_0m-1$ and $h=1,\dots,d$.
We stack all these quantities together in one vector
{\footnotesize \begin{align*}
&\bm{V}_+ :=(v_{m,t_0m+1,0,1},v_{m,t_0m+2,0,1},v_{m,t_0m+2,1,1},\dots,v_{m,Tm,Tm-t_0m-1,1},v_{m,t_0m+1,0,2},\dots,v_{m,Tm,Tm-t_0m-1,d})^\top.
\end{align*}}
Next define the vector 
$$\bm{V}  = (V_1,V_2,\dots,V_{d_{\bm{V}}})^\top := \big(\bm{V}^\top_+,-\bm{V}^\top_+\big)^\top
$$
and denote its dimension by   $d_{\bm{V}}$. Observe that by construction the identity
\begin{align*}
\max_{i=1}^{d_{\bm{V}}} V_i
= \Tmd(t_0)
= \maxhd \max_{k=mt_0+1}^{Tm} \max_{j=0}^{k-mt_0-1} \dfrac{(k-j)w(k/m)}{\sigma_h\sqrt{m}} \left| \hatmu_{m+j+1}^{m+k}(h) - \hatmu_1^{m+j}(h) \right|
\end{align*}
holds, where we use the fact that $\bm{V}$ contains both, the positive and negative version of all random variables which 
appear in the maximum in the definition of the statistic
 $\Tmd(t_0)$.
Further note that the dimension of $\bm{V}$ is bounded by 
\begin{align}\label{ineq:dimv}
d_{\bm{V}}
\leq 2d(Tm)^2.
\end{align}
By the construction above each component $V_i$ corresponds either to  $v_{m,k,j,h}$ or to $-v_{m,k,j,h}$ for some combination $k,j,h$.
Hence, it can be represented by 
$$
V_i = \dfrac{1}{\sqrt{m}}\sum\limits_{t=1}^{m(T+1)} X_{t,i}^*
$$
with
\begin{align}\label{eq:xtilde}
X_{t,i}^* = \begin{cases}
\dfrac{a_{t,m,k,j}}{\sigma_h} X_{t,h}\;\;&\text{for}\;\;1 \leq i \leq d_{\bm{V}}/2~,\\[16pt]
-\dfrac{a_{t,m,k,j}}{\sigma_h} X_{t,h}~\;\;&\text{for}\;\;d_{\bm{V}}/2+1 \leq i \leq d_{\bm{V}}~,
\end{cases}
\end{align}
where the indices $k,j,h$ correspond to $i$ according to the construction of the vector $\bm{V}$ and the coefficients $a_{t,m,k,j}$ are given by
\begin{align}\label{eq:atmkj}
a_{t,m,k,j}
= \begin{cases}
0  &\text{if}\;\;\; t > m+k~,\\
a^{(1)}_{m,k}:=w(k/m) \;\;  &\text{if}\;\;\; m+j< t \leq m+k~,\\[12pt]
a^{(2)}_{m,k,j}:=-\dfrac{(k-j)w(k/m)}{(m+j)} \;\; &\text{if}\;\;\; t \leq m+j~.
\end{cases}
\end{align}
Using the fact $w(k/m) = 1/(1 + k/m)$ and $1 \leq k \leq mT$, we obtain
\begin{align}\label{ineq:a1mk}
\dfrac{1}{T+1} \leq w(k/m) = a^{(1)}_{m,k} \leq 1
\end{align}
and as $t_0m \leq k-j$ and $j\leq k \leq Tm$ it follows that
\begin{align}\label{ineq:a2mkj}
\dfrac{t_0}{(T+1)^2} \leq  |a^{(2)}_{m,k,j}| = \bigg| \dfrac{(k-j)w(k/m)}{m+j} \bigg| \leq T~,
\end{align}
which yields by definition of $a_{t,m,k,j}$ in \eqref{eq:atmkj} the upper bound
\begin{align}\label{bound:atmkj}
|a_{t,m,k,j}|
\leq T_+:=\max\{T,1\}~.
\end{align}
Moreover, the temporal dependence structure of 
the $d_{\bm{V}}$-dimensional time series
$$
\big\{(X^*_{t,1}, \ldots , X^*_{1,d_{\bm{V}}} )^\top \big\}_{t\in\mathbb{Z}}
$$ 
still satisfies the concept of physical dependence as
\begin{align*}
X^*_{t,i}=g^*_{t,i}(\varepsilon_t,\varepsilon_{t-1},\ldots )
\end{align*}
with
\begin{align}\label{eq:gtilde}
g^*_{m,t,i}(\varepsilon_t,\varepsilon_{t-1},\ldots ):= \begin{cases}
\dfrac{a_{t,m,k,j}}{\sigma_h} g_{h}(\varepsilon_t,\varepsilon_{t-1},\ldots )\;\;&\text{for}\;\;1 \leq i \leq d_{\bm{V}}/2~,\\[16pt]
-\dfrac{a_{t,m,k,j}}{\sigma_h} g_{h}(\varepsilon_t,\varepsilon_{t-1},\ldots )~\;\;&\text{for}\;\;d_{\bm{V}}/2+1 \leq i \leq d_{\bm{V}}~,
\end{cases}
\end{align}
where the indices $k,j,h$ correspond to $i$ according to the construction of the vector $\bm{V}$.\\
In the following let $(\dot{V}^{(z)}_1,\dots,\dot{V}^{(z)}_{d_{\bm{V}}})^\top$ denote a centered Gaussian distributed vector with the same covariance structure as $\bm{V}$.
Next, recall the definition of the Gaussian random variables $\{Z_{t,h}\}_{t=1,\dots,m+mT}^{h=1,\dots,d}$ in \eqref{def:GaussFields} and let
$$
Z^*_{t,i} = \begin{cases}
a_{t,m,k,j} Z_{t,h}\;\;&\text{for}\;\;1 \leq i \leq d_{\bm{V}}/2~,\\[16pt]
-a_{t,m,k,j} Z_{t,h}~\;\;&\text{for}\;\;d_{\bm{V}}/2+1 \leq i \leq d_{\bm{V}}~.
\end{cases}
$$
Further define the vector $\bm{V}^{(z)} = (V^{(z)}_1,\dots,V^{(z)}_{d_{\bm{V}}})^\top $ by
$$
V^{(z)}_i := \dfrac{1}{\sqrt{m}}\sum_{t=1}^{m(T+1)} Z^*_{t,i}\;\;\;\;\;i=1,\dots,d_{\bm{V}}~.
$$
We now proceed as follows:\\[12pt]
\textbf{Step 1:} Show that for  some (sufficiently small) constant $\tilde{C}_1>0$
\begin{align}\label{ineq:KolmDistance1}
\sup_{x \in \R} \Big| \Pb\Big( \max_{i=1}^{d_{\bm{V}}} \dot{V}_i^{(z)}  \leq x \Big) 
- \Pb\Big( \max_{i=1}^{d_{\bm{V}}} V_i^{(z)}  \leq x \Big)  \Big|
\lesssim m^{-\tilde{C}_1} \,.
\end{align}
\textbf{Step 2:}
Establish that  for some (sufficiently small) constant $\tilde{C}_2>0$
\begin{align}\label{ineq:KolmDistance2}
\sup_{x \in \R} \Big| \Pb\Big( \max_{i=1}^{d_{\bm{V}}} V_i \leq x \Big) 
- \Pb\Big( \max_{i=1}^{d_{\bm{V}}} \dot{V}_i^{(z)}  \leq x \Big)  \Big|
\lesssim m^{-\tilde{C}_2}~.
\end{align}
If both steps have been proven, the claim of Lemma \ref{lem:gaussianapprox} follows from the identity
\begin{align*}
\max_{i=1}^{d_{\bm{V}}} V^{(z)}_i = \TmdZ(t_0)~.
\end{align*}

\noindent \textbf{Proof of Step 1:}
As we aim to compare the maxima of the two Gaussian distributed vectors $\bm{V}^{(z)}$ and $\dot{\bm{V}}^{(z)}$ we will apply Lemma \ref{lem:ineqcherno2}.
Therefore, we analyze the covariance structures of $\bm{V}^{(z)}$ and $\dot{\bm{V}}^{(z)}$ [or equivalently $\bm{V}$].
Let $i_1, i_2 \in \{1,\dots,d_{\bm{V}}/2\}$ with corresponding indices $h_1,j_1,k_1$ and $h_2,j_2,k_2$ according to equation \eqref{eq:xtilde}.
For the calculation we assume without loss of generality that $j_1\leq j_2$ and use the notation $k_{min} = \min\{k_1,k_2\}$, $k_{max} = \max\{k_1, k_2\}$ and $j_2 \wedge k_1 = \min\{j_2,k_1\}$.
Further we use the convention $\sum_{i=j}^k \beta_i =0$, whenever $k<j$.
For the covariance of the components of the vector $\bm{V}^{(z)}$ note that temporal independence yields
\begin{align*}
&\Cov\Big(V^{(z)}_{i_1},\,V^{(z)}_{i_2}\Big)
= \dfrac{1}{m}\sum_{t=1}^{m+k_{min}} \Cov\Big(a_{t,m,k_1,j_1} Z_{t,h_1},\,a_{t,m,k_2,j_2} Z_{t,h_2}\Big)\\
&= \dfrac{1}{m}\sum_{t=1}^{m+j_1} \Cov\Big(a_{t,m,k_1,j_1} Z_{t,h_1},\,a_{t,m,k_2,j_2} Z_{t,h_2}\Big)
+ \dfrac{1}{m}\sum_{t=m+j_1+1}^{m+(j_2 \wedge k_1)} \Cov\Big(a_{t,m,k_1,j_1} Z_{t,h_1},\,a_{t,m,k_2,j_2} Z_{t,h_2}\Big)\\
&\quad+ \dfrac{1}{m}\sum_{t=m+(j_2 \wedge k_1)+1}^{m+k_{min}} \Cov\Big(a_{t,m,k_1,j_1} Z_{t,h_1},\,a_{t,m,k_2,j_2} Z_{t,h_2}\Big)~.
\end{align*}
Using the definition in \eqref{eq:atmkj} and \eqref{def:GaussFields} we obtain 
\begin{align}
\begin{split}\label{eq:covVZ}
\Cov\Big(V^{(z)}_{i_1},\,V^{(z)}_{i_2}\Big)
&= \dfrac{a^{(2)}_{m,k_1,j_1}a^{(2)}_{m,k_2,j_2}}{m\sigma_{h_1}\sigma_{h_2}}(m+j_1)\gamma_{h_1,h_2}
+\dfrac{a^{(1)}_{m,k_1}a^{(2)}_{m,k_2,j_2}}{m\sigma_{h_1}\sigma_{h_2}}\big((j_2 \wedge k_1)-j_1\big)\gamma_{h_1,h_2}\\
&\hspace{2cm}+\dfrac{a^{(1)}_{m,k_1}a^{(1)}_{m,k_2}}{m\sigma_{h_1}\sigma_{h_2}} (k_{min}-j_2)\gamma_{h_1,h_2} \,I\{j_2 < k_{min}\}\,.
\end{split}
\end{align}
Similar calculations also yield
\begin{align*}
\Var\big(V^{(z)}_{i_1}\big)
= \Cov\big(V^{(z)}_{i_1},V^{(z)}_{i_1}\big)
= \big(a^{(2)}_{m,k_1,j_1}\big)^2 \dfrac{(m+j_1)}{m} + \big(a^{(1)}_{m,k_1}\big)^2\dfrac{k_1-j_1}{m}
\end{align*}
and from  \eqref{ineq:a1mk} and \eqref{ineq:a2mkj} it follows that 
\begin{align}\label{ineq:varzh1}
\dfrac{t_0^2}{(T+1)^4}+\dfrac{t_0}{(T+1)^2} 
\leq \Var\big(V^{(z)}_{i_1}\big)
\leq T^3 + T~.
\end{align}
By the same arguments we obtain for the covariance structure of the components of the vector
$\dot{\bm{V}}^{(z)}$ [note that we cannot use temporal independence here]:
\begin{align}\label{eq:decompCovarVi}
\begin{split}
\Cov\big(\dot{V}^{(z)}_{i_1},\,\dot{V}^{(z)}_{i_2}\big)
&= \Cov\bigg(\dfrac{1}{\sqrt{m}}\sum_{t=1}^{m+k_1}X^*_{t,i_1} ,\,\dfrac{1}{\sqrt{m}}\sum_{t=1}^{m+k_2}X^*_{t,i_2}\bigg)\\
&= \sum_{\ell=1}^4 \Cov\Big(S_\ell^{(1)}, S_\ell^{(2)}\Big) 
+ \sum_{\substack{\ell,j=1\\\ell\neq j}}^4 \Cov\Big(S_\ell^{(1)}, S_j^{(2)}\Big)~,
\end{split}
\end{align}
where the terms in the last line are defined for $u = 1,2$ by
\begin{align*}
S_1^{(u)} &= \dfrac{1}{\sqrt{m}}\sum_{t=1}^{m+j_1}X^*_{t,i_u}
\;,
\;\;\;
S_2^{(u)} = \dfrac{1}{\sqrt{m}}\sum_{t=m+j_1+1}^{m+(j_2 \wedge k_1)}X^*_{t,i_u}~,
\\
S_3^{(u)} &= \dfrac{1}{\sqrt{m}}\sum_{t=m+(j_2 \wedge k_1)+1}^{m+k_{min}}X^*_{t,i_u}\;,
\;\;\;
S_4^{(u)} = \dfrac{1}{\sqrt{m}}\sum_{t=m+k_{min}+1}^{m+k_{max}}X^*_{t,i_u}~.
\end{align*}
We will now treat the two sums on the right-hand side of \eqref{eq:decompCovarVi} separately
and show that the first sum is close to $\Cov\big(V^{(z)}_{i_1},\,V^{(z)}_{i_2}\big)$, while the second vanishes sufficiently fast.
Using that by construction, either $S_4^{(1)}=0$ or $S_4^{(2)}=0$, we obtain that
\begin{align*}
\sum_{\ell=1}^4 \Cov\Big(S_\ell^{(1)}, S_\ell^{(2)}\Big) 
&= \dfrac{a^{(2)}_{m,k_1,j_1}a^{(2)}_{m,k_2,j_2}}{m\sigma_{h_1}\sigma_{h_2}}
\sum_{t=1}^{m+j_1}\sum_{s=1}^{m+j_1} \Cov\Big(X_{t,h_1},X_{s,h_2}\Big)\\
&\quad+ \dfrac{a^{(1)}_{m,k_2}a^{(2)}_{m,k_2,j_2}}{m\sigma_{h_1}\sigma_{h_2}}
\sum_{t=m+j_1+1}^{m+(j_2 \wedge k_1)}\sum_{s=m+j_1+1}^{m+(j_2 \wedge k_1)}\Cov\Big(X_{t,h_1},X_{s,h_2}\Big)\\
&\quad+ \dfrac{a^{(1)}_{m,k_1}a^{(1)}_{m,k_2}}{m\sigma_{h_1}\sigma_{h_2}}\sum_{t=m+(j_2 \wedge k_1)+1}^{m+k_{min}}\sum_{s=m+(j_2 \wedge k_1)+1}^{m+k_{min}}\Cov\Big(X_{t,h_1},X_{s,h_2}\Big)\\
&=\dfrac{a^{(2)}_{m,k_1,j_1}a^{(2)}_{m,k_2,j_2}}{m\sigma_{h_1}\sigma_{h_2}}\sum_{t=-m-j_1}^{m+j_1}\big(m+j_1-|t|\big)\phi_{t,h_1,h_2}\\
&\quad+  \dfrac{a^{(1)}_{m,k_2}a^{(2)}_{m,k_2,j_2}}{m\sigma_{h_1}\sigma_{h_2}}\sum_{t=-(j_2 \wedge k_1)+j_1}^{(j_2 \wedge k_1)-j_1}\big((j_2 \wedge k_1)-j_1-|t|\big)\phi_{t,h_1,h_2}\\
&\quad+ \dfrac{a^{(1)}_{m,k_1}a^{(1)}_{m,k_2}}{m\sigma_{h_1}\sigma_{h_2}}I\big\{k_{min} > j_2 \wedge k_1 \big\}\sum_{t=-k_{min}+(j_2 \wedge k_1)}^{k_{min}-(j_2 \wedge k_1)}\big(k_{min}-(j_2 \wedge k_1)-|t|\big)\phi_{t,h_1,h_2}\,,
\end{align*}
where we used the notation $\phi_{t,h_1,h_2} := \Cov(X_{0,h_1},X_{t,h_2})$.  
Combining the bounds in \eqref{ineq:a1mk} and \eqref{ineq:a2mkj} with \eqref{eq:covVZ} and Assumption \ref{assump:temporalDependence} \eqref{assump:TD2}, we deduce that
\begin{align}\label{ineq:covarVi1}
\begin{split}
\bigg|&\Cov\big(V^{(z)}_{i_1},\,V^{(z)}_{i_2}\big) - \sum_{\ell=1}^4 \Cov\Big(S_\ell^{(1)}, S_\ell^{(2)}\Big) \bigg|
\leq \dfrac{C_{T,t_0}}{c_\sigma^2 m}\Bigg[ \sum_{t \in \Z} \min\{|t|,m+j_1\}|\phi_{t,h_1,h_2}| \\
&\quad + \sum_{t \in \Z} \min\{|t|,\,(j_2 \wedge k_1)-j_1\}|\phi_{t,h_1,h_2}|
+ \sum_{t \in \Z} \min\{|t|,\,k_{min}-(j_2 \wedge k_1)\}|\phi_{t,h_1,h_2}|\Bigg]\\
&\leq \dfrac{3C_{T,t_0}}{c_\sigma^2 m} \sum_{t \in \Z} |t||\phi_{t,h_1,h_2}|\,,
\end{split}
\end{align}
where the constant $C_{T,t_0}$ depends on $T$ and $t_0$ only and we used the definition of $\gamma_{h_1,h_2}$ in \eqref{eq:longruncovariance}.
Using Assumption \ref{assump:temporalDependence} \eqref{assump:TD1} and Lemma E4 from \cite{Jirak2015} it follows that
\begin{align}\label{bound:crosscovariance}
\sup_{h_1,h_2 \in \N} \sum_{t \in \Z} |t||\phi_{t,h_1,h_2}|< \infty~,
\end{align}
which yields 
\begin{align}\label{bound:Ssym}
\bigg|\Cov\big(V^{(z)}_{i_1},\,V^{(z)}_{i_2}\big) - \sum_{\ell=1}^4 \Cov\Big(S_\ell^{(1)}, S_\ell^{(2)}\Big) \bigg|
\lesssim \dfrac{1}{m}~,
\end{align}
where the involved constant is independent of $i_1$ and $i_2$ [or equivalently $j_1,j_2,k_1,k_2,h_1$ and $h_2$].
Next, we treat the second sum on the right-hand of \eqref{eq:decompCovarVi}.
For that purpose, note that for arbitrary points in time $p_1 < p_2 < p_3 < p_4$, it holds that
\begin{align}\label{ineq:Snonoverlap}
\begin{split}
\bigg| \Cov\Big( &\sum_{t=p_1}^{p_2} X_{t,h_1},\,\sum_{s=p_3}^{p_4} X_{s,h_2}\Big)\bigg|
\leq \sum_{t=1}^{p_2}\sum_{s=p_2+1}^{p_4} \Big|\Cov\Big(X_{t,h_1},\, X_{s,h_2}\Big)\Big|\\
&= \sum_{t=1}^{p_2}\sum_{s=p_2+1}^{p_4} |\phi_{s-t,h_1,h_2}|
= \sum_{t=1}^{p_2}\sum_{s=p_2-t+1}^{p_4-t} |\phi_{s,h_1,h_2}|\\
&\leq \sum_{t=1}^{p_2}\sum_{s=p_2-t+1}^{p_4} |\phi_{s,h_1,h_2}|
= \sum_{s=1}^{p_4}\sum_{t=p_2-s+1}^{p_2} |\phi_{s,h_1,h_2}|
= \sum_{s=1}^{p_4} s|\phi_{s,h_1,h_2}|~.
\end{split}
\end{align}
Using the upper bound for the coefficients $a_{t,m,k,j}$ in \eqref{bound:atmkj}, the uniform bound in \eqref{bound:crosscovariance} and that all the pairs of the sums under consideration are non-overlapping as treated above in \eqref{ineq:Snonoverlap}, we obtain directly that
\begin{align}\label{bound:Scross}
\sum_{\substack{\ell,j=1\\i\neq j}}^4 \Cov\Big(S_\ell^{(1)}, S_j^{(2)}\Big) 
\lesssim \dfrac{1}{m}~,
\end{align}
where the constant is again independent of $i_1, i_2$.
Combining the estimates \eqref{bound:Ssym} and \eqref{bound:Scross}, we conclude
\begin{align}\label{ineq:delta}
\Delta_m
:= \max_{i_1,i_2=1}^{d_{\bm{V}}}
\bigg|\Cov\big(V^{(z)}_{i_1},\,V^{(z)}_{i_2}\big) 
- \Cov\big(\dot{V}^{(z)}_{i_1},\,\dot{V}^{(z)}_{i_2}\big)\bigg|
\lesssim \dfrac{1}{m}\,.
\end{align}
Due to \eqref{ineq:varzh1} we can now apply Lemma \ref{lem:ineqcherno2}, which gives
\begin{align*}
\sup_{x \in \R} \Big| \Pb\Big( \max_{i=1}^{d_{\bm{V}}} V^{(z)}_i  \leq x \Big) 
- \Pb\Big( \max_{i=1}^{d_{\bm{V}}} \dot{V}^{(z)}_i  \leq x \Big)  \Big| &\lesssim \Delta_m^{1/3}\cdot\max\Big\{1,\, \log\big(d_{\bm{V}}/\Delta_m \big) \Big\}^{2/3}\\
&\lesssim \max\Big\{\Delta_m^{1/2},\, \Delta_m^{1/2}\big|\log d_{\bm{V}}\big|+ \Delta_m^{1/2}\big|\log\Delta_m\big|\Big\}^{2/3}\,.
\end{align*}
Using \eqref{ineq:dimv} and Assumption \ref{assump:model} the assertion of Step 1 follows.\\

\noindent \textbf{Proof of Step 2:}
Corollary 2.2 of \cite{Zhang2018b} yields the Gaussian approximation in \eqref{ineq:KolmDistance2} 
if the following three conditions hold uniformly in $t$ and $i$ (or equivalently in $t,k,j,h$).
\begin{enumerate}[label=(\roman*)]
\item There exist a constant  $b\in [0,1/11)$ and a deterministic sequence $B^*_m\lesssim m^{(3-17b)/8}$ such that $$d_{\bm{V}}\lesssim \exp((Tm)^b) \quad \text{ and }\quad \E\Big[\exp\big(|X^*_{t,i}|/B^*_m\big)\Big] \leq C_e\,,$$
where $C_e>1$ is the constant from Assumption \ref{assump:RV} \eqref{assump:S1}.
\item With $\beta$ as in Assumption \ref{assump:temporalDependence} \eqref{assump:TD1} it holds
$$\sum_{\ell=u}^{\infty}\sup_{t\in\mathbb{Z}}
\big\Vert g^*_{t,i}(\varepsilon_t,\varepsilon_{t-1},\ldots )-g^*_{t,i}(\varepsilon_t,\varepsilon_{t-1},\ldots,\varepsilon_{t-\ell+1},\varepsilon_{t-\ell}',\varepsilon_{t-\ell-1},\ldots ) \big\Vert_p\lesssim \beta^u~,
$$ where $\varepsilon_{t-l}'$ is an independent copy of $\varepsilon_{t-\ell}$.
\item There exist positive constants $c_1,c_2$ such that $c_1 \leq \Var\big(V_i\big)\leq c_2$.
\end{enumerate}
Therefore  the proof of Lemma \ref{lem:gaussianapprox} is completed by establishing these conditions.\\

\noindent Proof of (i): By \eqref{ineq:dimv} and Assumption \ref{assump:model} \eqref{assump:D1} the inequality $d_{\bm{V}}\lesssim \exp((Tm)^b)$ holds for any $b>0$.
Due to Assumption \ref{assump:RV} and the upper bound on $|a_{t,m,k,j}|$ in \eqref{bound:atmkj} we obtain that
\begin{align}\label{ineq:3aCherno}
|X^*_{t,i}|
\eqd \dfrac{|a_{t,m,k,j}|}{\sigma_h}|X_{1,h}|
\leq \dfrac{T_+}{c_\sigma}|X_{1,h}|.
\end{align}
Defining $B^*_m:=\dfrac{T_+}{c_\sigma}B_m \lesssim m^B$, where $B_m$ is the sequence from Assumption \ref{assump:RV} and $B<3/8$, it follows that
\begin{align*}
\E\Big[\exp\big(|X^*_{t,i}|/B^*_m\big)\Big] &=\E\Big[\exp\big(|a_{t,m,k,j}||X_{t,h}|/(\sigma_h B^*_m)\big)\Big]\\
&\leq \E\Big[\exp\big(|X_{1,h}|/B_m\big)\Big]\leq C_e\,.
\end{align*}
As $d_{\bm{V}}\lesssim \exp((Tm)^b)$ holds for any $b>0$, we can choose $b$ to be sufficiently small such that $B<(3-17b)/8<3/8$. \\
\par

\noindent Proof of (ii): In view of \eqref{eq:gtilde}, \eqref{bound:atmkj}, Assumption \ref{assump:temporalDependence} \eqref{assump:TD1} and the stationarity of $\{\varepsilon_t\}_{t\in\mathbb{Z}}$ we obtain
\begin{align*}
\sum_{\ell=u}^{\infty}&\sup_{t\in\mathbb{Z}}\big\Vert g^*_{m,t,i}(\varepsilon_t,\varepsilon_{t-1},\ldots ) - g^*_{m,t,i}(\varepsilon_t,\varepsilon_{t-1},\ldots,\varepsilon_{t-\ell+1},\varepsilon_{t-\ell}',\varepsilon_{t-\ell-1},\ldots ) \big\Vert_p\\
&= \sum_{\ell=u}^{\infty}\sup_{t\in\mathbb{Z}}\dfrac{|a_{t,m,k,j}|}{\sigma_h}\big\Vert g_{h}(\varepsilon_\ell,\varepsilon_{\ell-1},\ldots )-g_{h}(\varepsilon_\ell,\varepsilon_{\ell-1},\ldots,\varepsilon_{1},\varepsilon_{0}',\varepsilon_{-1},\ldots ) \big\Vert_p\\
&\leq  \dfrac{T_+}{c_{\sigma}}\sum_{\ell=u}^{\infty}\vartheta_{\ell,h,p}\leq \dfrac{T_+C_\vartheta}{c_{\sigma}} \sum_{\ell=u}^{\infty}\beta^\ell\lesssim \beta^u\,.
\end{align*}

\noindent Proof of (iii): The assertion follows by combining \eqref{ineq:varzh1} and \eqref{ineq:delta}.
\end{proof}

\begin{proof}[Proof of Lemma \ref{lem:relaxCorr}]
Recall the definition of the Gaussian vector $\bm{V}^{(z)} = \big(V^{(z)}_1 ,\dots, V^{(z)}_{\dV} \big)^\top$ from the proof of Lemma \ref{lem:gaussianapprox}, which fulfills the identity
\begin{align*}
\TmdZ(t_0) = \max_{i=1}^{\dV} V^{(z)}_i~.
\end{align*}
Applying again the vectorization technique as introduced in the proof of Lemma \ref{lem:gaussianapprox}, we can define analogously a Gaussian vector for the statistic  $\tildeTmdZ$ in \eqref{def:truncatedTmds}.
Recall the definition of  $\tilde{z}_i^j$ in  \eqref{def:smallzTildez} and  
introduce the notation
\begin{align}\label{def:tildevmkjh}
\tilde{v}_{m,k,j,h} &:= \dfrac{(k-j)w(k/m)}{\sqrt{m}}
\Big( \tilde{z}_{m+j+1}^{m+k}(h) - \tilde{z}_1^{m+j}(h) \Big)~,
\end{align}
with $k=t_0m+1,\dots,Tm $ and $j=0\dots,k-t_0m-1$ and $h=1,\dots,d$.
We stack all these quantities together in one vector, this is
{\footnotesize \begin{align*}
&\tildebVz_+:=(\tilde{v}_{m,t_0m+1,0,1},\tilde{v}_{m,t_0m+2,0,1},\tilde{v}_{m,t_0m+2,1,1},\dots,\tilde{v}_{m,Tm,Tm-t_0m-1,1},\tilde{v}_{m,t_0m+1,0,2},\dots,\tilde{v}_{m,Tm,Tm-t_0m-1,d})^\top.
\end{align*}}
Next let $\tildebVz = \Big(\big(\tildebVz_+\big)^\top ,\, -\big(\tildebVz_+\big)^\top\Big)^\top$ with dimension $d_{\bm{V}}$ and denote its components by
\begin{align*}
\tildebVz = (\tildeVz_1,\tildeVz_2,\dots,\tildeVz_{\dV})^\top~.
\end{align*}
By construction of $\tildebVz$ we have 
\begin{align*}
\tildeTmdZ = \max_{i=1}^{\dV} \widetilde{V}^{(z)}_i~.
\end{align*}
The covariance structure of $\bm{V}^{(z)}$ was already calculated in \eqref{eq:covVZ} and is given by
\begin{align}\label{eq:relaxLemma1}
\begin{split}
\Cov\Big(V_{i_1}^{(z)},\,V_{i_2}^{(z)} \Big)
= &\dfrac{a^{(2)}_{m,k_1,j_1}a^{(2)}_{m,k_2,j_2}}{m}(m+j_1)\rho_{h_1,h_2}
+\dfrac{a^{(1)}_{m,k_1}a^{(2)}_{m,k_2,j_2}}{m}\big((j_2 \wedge k_1)-j_1\big)\rho_{h_1,h_2}\\
&\hspace{3cm}+\dfrac{a^{(1)}_{m,k_1}a^{(1)}_{m,k_2}}{m}\rho_{h_1,h_2} (k_{min}-j_2)I\{j_2 < k_{min}\}~,
\end{split}
\end{align}
where $k_1, j_1, h_1$ and $k_2,j_2,h_2$ are the corresponding indices to $i_1$ and $i_2$, respectively, and we use the notation $k_{min} = \min\{k_1, k_2\}$.
A similar calculation  for the vector $\widetilde{\bm{V}}^{(z)}$ gives
\begin{align}\label{eq:relaxLemma2}
\begin{split}
\Cov\Big(\widetilde{V}_{i_1}^{(z)},\,\widetilde{V}_{i_2}^{(z)} \Big)
= &\dfrac{a^{(2)}_{m,k_1,j_1}a^{(2)}_{m,k_2,j_2}}{m}(m+j_1)\tilderho_{h_1,h_2}
+\dfrac{a^{(1)}_{m,k_1}a^{(2)}_{m,k_2,j_2}}{m}\big((j_2 \wedge k_1)-j_1\big)\tilderho_{h_1,h_2}\\
&\hspace{3cm}+\dfrac{a^{(1)}_{m,k_1}a^{(1)}_{m,k_2}}{m}\tilderho_{h_1,h_2} (k_{min}-j_2)I\{j_2 < k_{min}\}~.
\end{split}
\end{align}
Note that by definition of the truncated correlations in \eqref{eq:longruncorrW} the quantities in \eqref{eq:relaxLemma1} and \eqref{eq:relaxLemma2} coincide, whenever $|h_1-h_2| \leq L_d$.
Therefore we obtain for the maximum difference of the covariances,
\begin{align*}
\Delta_m 
:= \max_{i_1,i_2=1}^{\dV} 
\bigg| \Cov\Big(V_{i_1}^{(z)},\,V_{i_2}^{(z)} \Big) - \Cov\Big(\widetilde{V}_{i_1}^{(z)},\,\widetilde{V}_{i_2}^{(z)} \Big) \bigg|
\leq C_T \sup_{\substack{h_1,h_2=1\dots,d\\ |h_1-h_2| > L_d}} |\rho_{h_1,h_2}|~,
\end{align*}
where $C_T$ is a constant depending on  $T$ only, as we used that $j_1,j_2,k_1,k_2 \leq mT$ and the upper bound in \eqref{bound:atmkj}.
Assumption \ref{assump:spatialDependence} \eqref{assump:SD1} and $L_d = d^\Delta$ now yields 
\begin{align*}
\Delta_m \lesssim \log^{-2}(L_d) r_{L_d} = o\big(\log^{-2}(d)\big)~.
\end{align*}
Due to \eqref{ineq:varzh1}, we can apply Lemma \ref{lem:ineqcherno2}, which gives
\begin{align*}
\sup_{x \in \R} 
\bigg| \Pb\Big( \max_{i=1}^{\dV} V^{(z)} \leq x \Big) 
- \Pb\Big(  \max_{i=1}^{\dV} \widetilde{V}^{(z)}_i\leq x \Big) \bigg|
&\lesssim \Delta_m^{1/3}\cdot\max\Big\{1,\, \log\big(\dV/\Delta_m \big) \Big\}^{2/3}~\\
&\leq \max\Big\{\Delta_m^{1/2},\, \Delta_m^{1/2}\big|\log \dV \big|+ \Delta_m^{1/2}\big|\log\Delta_m\big|\Big\}^{2/3}.
\end{align*}
In view of \eqref{ineq:dimv} and Assumption \ref{assump:model} \eqref{assump:D1}, the proof of Lemma \ref{lem:relaxCorr} is completed.
\end{proof}

\begin{proof}[Proof of Lemma \ref{lem:discretize}]
We use similar arguments as given in the proof of Lemma B.7 of \cite{Jirak2015}.
Let $\{W_h'\}_{h \in \N}$ denote an independent copy of the sequence of Brownian motions $\{W_h\}_{h \in \N}$
 defined in  \eqref{eq:longruncorrW}.
Recalling the notation \eqref{def:smallzTildez}  we obtain the representation
\begin{align}\label{eq:TmdZt0}
\tildeTmdZ(t_0) 
= \maxhd \max_{k=t_0m+1}^{Tm}\max_{j=0}^{k-mt_0-1} \dfrac{1}{\sqrt{m}(1+k/m)} 
\bigg|\sum_{t=m+j+1}^{m+k} \tildeZ_{t,h} - \dfrac{k-j}{m+j} \sum_{t=1}^{m+j} \tildeZ_{t,h} \bigg|~.
\end{align}
To investigate the quantities in the maximum we note that 
\begin{align}\label{eq:datatoBM}
\begin{split}
&\hspace{0.51cm}\dfrac{1}{\sqrt{m}(1+k/m)} \bigg|  \sum_{t=m+j+1}^{m+k} \tildeZ_{t,h} - \dfrac{k-j}{m+j} \sum_{t=1}^{m+j} \tildeZ_{t,h} \bigg|\\
&=
\dfrac{1}{\sqrt{m}(1+k/m)} \bigg|  \sum_{t=1}^{m+k} \tildeZ_{t,h} - \dfrac{m+k}{m+j} \sum_{t=1}^{m+j} \tildeZ_{t,h} \bigg|\\
&\eqd \dfrac{1}{1+k/m} \bigg| W_h(k/m+1) -  \dfrac{m+k}{m+j} W_h(j/m+1) \bigg|\\
&= \dfrac{1}{1+k/m} \bigg| W_h(k/m+1) - W_h(1) - \dfrac{m+k}{m+j} \Big(W_h(j/m+1) - W_h(1) \Big) - \dfrac{k-j}{m+j}W_h(1) \bigg|\\
&\eqd \dfrac{1}{1+k/m} \bigg| W_h(k/m) - \dfrac{m+k}{m+j} W_h(j/m) - \dfrac{k-j}{m+j}W_h'(1) \bigg|\\
&= \dfrac{1}{(1+k/m)(1+j/m)} \bigg| (1+j/m)\Big\{ W_h(k/m) - k/m W_h'(1) \Big\}\\
&\hspace{6cm} -(1+k/m)\Big\{ W_h(j/m) - j/mW_h'(1) \Big\} \bigg|\,,
\end{split}
\end{align}
where in all steps  the correlation structure of $\{W_h\}_{h \in \N}$ is preserved.
A calculation of the covariance kernel implies the  identity (in distribution)
\begin{align*}
\Big\{ W_h(t) - tW_h'(1) \Big\}_{t \geq 0,\, h \in \N}
\eqd \Big\{ (1+t)W_h\Big(\dfrac{t}{t+1}\Big)\Big\}_{t \geq 0,\, h \in \N}~.
\end{align*}
Applying this to \eqref{eq:datatoBM} yields 
\begin{align*}
&\dfrac{1}{(1+k/m)(1+j/m)} \bigg| (1+j/m)\Big\{ W_h(k/m) - k/m W_h'(1) \Big\} -(1+k/m)\Big\{ W_h(j/m) - j/mW_h'(1) \Big\} \bigg|\\
&\quad \eqd \bigg| W_h\Big( \dfrac{k}{m+k} \Big) - W_h\Big(\dfrac{j}{m+j}\Big) \bigg|.
\end{align*}
This now gives 
\begin{align*}
\max_{k=t_0m+1}^{Tm}&\max_{j=0}^{k-mt_0-1} \dfrac{1}{\sqrt{m}(1+k/m)} 
\bigg|\sum_{t=m+j+1}^{m+k} \tildeZ_{t,h}- \dfrac{k-j}{m+j} \sum_{t=1}^{m+j} \tildeZ_{t,h} \bigg|\\
&\eqd \max_{k=t_0m+1}^{Tm}\max_{j=0}^{k-mt_0-1} \bigg| W_h\Big( \dfrac{k}{m+k} \Big) - W_h\Big(\dfrac{j}{m+j}\Big) \bigg|\\
&=  \max_{\substack{j,k \in \{1,\dots,Tm\} \\ k-j> mt_0}} \bigg| W_h\Big( \dfrac{k}{m+k} \Big) - W_h\Big(\dfrac{j}{m+j}\Big) \bigg|=:M_{h,m}(t_0)
~,
\end{align*}
 which is the discrete counterpart of the random variable $M_h(t_0)$ defined in \eqref{def:Mhc}.
Observing  the identity
\begin{align*}
M_h(t_0)
= &\max_{t \in [q(t_0),~\qT]} \max_{s \in [0,~q(q^{-1}(t)-t_0)]} \big|W_h(t) - W_h(s)\big|\\
= &\max_{t \in [q(t_0),~\qT]} \max_{s \in [0,~q^{-1}(t)-t_0]} \big|W_h(t) - W_h(q(s))\big|\\
= &\max_{t \in [t_0,T]} \max_{s \in [0,t-t_0]} \bigg|W_h\Big(\dfrac{t}{t+1}\Big) - W_h\Big(\dfrac{s}{s+1}\Big)\bigg|\,,
\end{align*}
the inequality $M_{h,m}(t_0) \leq M_h(t_0)$ already yields
\begin{align*}
&\Pb\bigg( \tildeTmdZ(t_0) \leq u_d(x) \bigg)
= \Pb\bigg( \maxhd M_{h,m}(t_0) \leq u_d(x) \bigg)\\
&\quad \geq \Pb\bigg( \maxhd M_h(t_0) \leq u_d(x) \bigg)
= \Pb\bigg( \mathcal{W}_d(t_0) \leq u_d(x) \bigg)~
\end{align*}
 for all $x \in \R$.
So it remains to find a suitable upper bound for
\begin{align}\label{toshowdisc}
\Pb\bigg( \tildeTmdZ(t_0) \leq u_d(x) \bigg) - \Pb\bigg( \Wd(t_0) \leq u_d(x) \bigg) \geq 0~.
\end{align}
Observing the inequality (which holds for all $y \in \R$)
\begin{align*}
\Pb\Big( \Wd(t_0) \leq u_d(x) \Big)
&\geq \Pb\Big( \tildeTmdZ(t_0) \leq u_d(x) - y , \big|\tildeTmdZ(t_0) - \Wd(t_0)\big| < y \Big)\\
&\geq \Pb\Big( \tildeTmdZ(t_0) \leq u_d(x) - y \Big) - \Pb\Big(\big|\tildeTmdZ(t_0) - \Wd(t_0)\big| > y \Big)
\end{align*}
the left-hand side in \eqref{toshowdisc} is bounded by
$$
\Pb\bigg( \tildeTmdZ(t_0) \leq u_d(x) \bigg) - \Pb\Big( \tildeTmdZ(t_0) \leq u_d(x) - y \Big) + \Pb\Big(\big|\tildeTmdZ(t_0) - \Wd(t_0)\big| > y \Big)~.
$$
We now choose $y_d=m^{-1/3}$.
Then the claim is a consequence of the following two assertions:
\begin{enumerate}[label=(\roman*)]
\item $\Pb\Big(\big|\tildeTmdZ(t_0) - \Wd(t_0)\big| > y_d \Big) = o(1)~$,
\item $\Pb\bigg( \tildeTmdZ(t_0) \leq u_d(x) \bigg) - \Pb\Big( \tildeTmdZ(t_0) \leq u_d(x) - y_d \Big) =  o(1)~$,
\end{enumerate}
which will be proven below to complete the proof of Lemma \ref{lem:discretize}.
To show (i), note that due to the time reversal and scaling properties of Brownian motions, it holds for all $k \leq Tm$, $1\leq h \leq d$
\begin{align*}
&\max_{t \in [(k-1)/m,~k/m]}\Big|W_h\Big(\dfrac{t}{t+1}\Big) - W_h\Big( \dfrac{k/m}{1+k/m} \Big)\Big| \\
&\eqd\max_{\lambda \in  \big[0,~ \frac{k}{m+k}-\frac{k-1}{m+k-1}\big]}\Big|W_h\Big(\dfrac{k/m}{1+k/m} - \lambda \Big) - W_h\Big( \dfrac{k/m}{1+k/m} \Big)\Big|\\
&\eqd\max_{\lambda \in [0,~ \frac{k}{m+k}-\frac{k-1}{m+k-1}]}\Big|W_h(\lambda)\Big|
\leq \max_{0\leq \lambda\leq 1/m}\Big|W_h(\lambda)\Big|\eqd \max_{0 \leq \lambda \leq 1} \big| W_h(\lambda) \big|/\sqrt{m}~,
\end{align*}
which yields
\begin{align*}
&\Pb\Big(\big|\tildeTmdZ(t_0) - \Wd(t_0)\big| > y_d \Big) =
\Pb\bigg(
\maxhd M_h(t_0) 
-\maxhd M_{h,m}(t_0)> y_d \bigg)\\
= &\Pb\bigg(
\maxhd \max_{t \in [t_0,\,T]} \max_{s \in [0,\,t-t_0]} \bigg|W_h\Big(\dfrac{t}{t+1}\Big) - W_h\Big(\dfrac{s}{s+1}\Big)\bigg|
\\
&\hspace{4cm}-\maxhd \max_{k=t_0m+1}^{Tm}\max_{j=0}^{k-mt_0-1} \bigg| W_h\Big( \dfrac{k}{m+k} \Big) - W_h\Big(\dfrac{j}{m+j}\Big)  \bigg|> y_d \bigg)\\
\leq &\Pb\bigg(
\maxhd \max_{k=t_0m+1}^{Tm}\max_{j=0}^{k-mt_0-1} \max_{t \in [(k-1)/m,\,k/m]}
\max_{s \in [j/m,\,(j+1)/m]} \bigg|W_h\Big(\dfrac{t}{t+1}\Big) - W_h\Big(\dfrac{s}{s+1}\Big)\bigg|
\\
&\hspace{4cm}-\maxhd \max_{k=t_0m+1}^{Tm}\max_{j=0}^{k-mt_0-1} \bigg| W_h\Big( \dfrac{k/m}{1+k/m} \Big) - W_h\Big(\dfrac{j/m}{1+j/m}\Big) \bigg|> y_d \bigg)\\
\leq  &\Pb\bigg(
\maxhd \max_{k=t_0m+1}^{Tm}\max_{j=0}^{k-mt_0-1} \max_{t \in [(k-1)/m,~k/m]} \max_{s \in [j/m,~(j+1)/m]} \bigg|W_h\Big(\dfrac{t}{t+1}\Big) - W_h\Big(\dfrac{s}{s+1}\Big)\\
&\hspace{7.5cm}- W_h\Big( \dfrac{k/m}{1+k/m} \Big) - W_h\Big(\dfrac{j/m}{1+j/m}\Big) \bigg|> y_d \bigg)\\
\leq& \Pb\bigg( 2 \maxhd \max_{k=1}^{Tm} \max_{t \in [(k-1)/m,~k/m]}\Big|W_h\Big(\dfrac{t}{t+1}\Big) - W_h\Big( \dfrac{k/m}{1+k/m} \Big)\Big|  > y_d \bigg) \\
\leq &\sum_{h=1}^d \sum_{k=1}^{Tm} \Pb\bigg( \sup_{0 \leq \lambda \leq 1} | W_h(\lambda) | > y_d\sqrt{m}/2 \bigg)
\leq dTm\dfrac{4}{\sqrt{2\pi}y_d}e^{-y_d^2m/8}~,
\end{align*}
where we have used the elementary bound [see for instance \cite{Karatzas1991}]
\begin{align*}
\Pb\bigg( \sup_{0 \leq \lambda \leq 1} |W_h(\lambda)| \geq z \bigg)
\leq \dfrac{4}{\sqrt{2\pi}z}e^{-z^2/2}~.
\end{align*}
This yields (i) since by Assumption \ref{assump:model} \eqref{assump:D1} the choice of $y_d$ gives $my_d^2= m^{1/3}$. \\[10pt]
 
\noindent To obtain the estimate (ii), recall the definition of the Gaussian vector $\tildebVz =\big(\tildeVz_1,\dots, \tildeVz_{\dV} \big)^\top$ in the proof of Lemma \ref{lem:relaxCorr}, which yields the identity
\begin{align*}
\max_{i=1}^{\dV} \tildeVz_i 
= \tildeTmdZ(t_0)~.
\end{align*}
For each component $\tildeVz_i$ of $\tildebVz$ there are indices $k,j,h$ such that 
$$
\tildeVz_i = \tilde{v}_{m,k,j,h}~,
$$
where $\tilde{v}_{m,k,j,h}$ is s defined in \eqref{def:tildevmkjh}.
Thus, we obtain the following bounds for the variance of the components of $\tildebVz$:
\begin{align*}
\Var \Big(\tilde{v}_{m,k,j,h} \Big)
&= \dfrac{\Big((k-j)w(k/m)\Big)^2}{m} \Var \Big(\tilde{z}_{m+j+1}^{m+k}(h) - \tilde{z}_1^{m+j}(h) \Big) \\
&= \dfrac{\Big((k-j)w(k/m)\Big)^2}{m} \bigg[ \Var \Big(\tilde{z}_{m+j+1}^{m+k}(h)\Big) + \Var \Big(\tilde{z}_1^{m+j}(h) \Big)\bigg]\\
&\geq \dfrac{m t_0^2}{(1+T)^2} \Var \Big(\tilde{z}_1^{m+j}(h) \Big) 
= \dfrac{mt_0^2}{(m+j)(1+T)^2} \geq \dfrac{t_0^2}{(1+T)^3}
\end{align*}
and
\begin{align*}
\Var \Big(\tilde{v}_{m,k,j,h} \Big)
&\leq \dfrac{mT^2}{(1+T)^2}\Big(\dfrac{1}{k-j} + \dfrac{1}{m}\Big)
\leq \dfrac{T^2}{(1+T)^2}\dfrac{1+t_0}{t_0}~.
\end{align*}
Using these bounds, we can apply Lemma \ref{lem:ineqcherno1} which yields
\begin{align*}
\Pb\bigg( \tildeTmdZ(t_0) \leq u_d(x) \bigg) - \Pb\Big( &\tildeTmdZ(t_0) \leq u_d(x) - y_d \Big)
=
\Pb\bigg( - y_d \leq \tildeTmdZ(t_0) - u_d(x) \leq 0 \bigg)\\
&\leq \sup_{z \in \R} \Pb\bigg( \Big|\tildeTmdZ(t_0) - z\Big| \leq y_d \bigg)\\
&\leq C_{T,t_0} \cdot y_d\Big( \sqrt{2\log(d)} + \sqrt{\max\{1, \log(\sigma_\ell/y_d)\}} \Big) = o(1)~,
\end{align*}
such that the assertion of Lemma \ref{lem:discretize} follows by the choice of $y_d$.
\end{proof}


\begin{proof}[Proof of Lemma \ref{lem:removeTrunc}]
First, recall the definition of $\mathcal{W}_d$ and $\mathcal{W}_d(t_0)$ in \eqref{def:wd} and note that
\begin{align*}
\mathcal{W}_d
&= \maxhd \max_{t \in [0,\qT]} \max_{s \in [0,t]} \big|W_h(s) - W_h(t)\big|\\
&=  \max \bigg\{\mathcal{W}_d(t_0)~, \maxhd\max_{t \in [0,q(t_0)]} \max_{s \in [0,t]} \big|W_h(s) - W_h(t)\big|~,\\
&\hspace{5cm}\maxhd\max_{t \in [q(t_0),\qT]} \max_{s \in [q(q^{-1}(t)-t_0),t]} \big|W_h(s) - W_h(t)\big| \bigg\}\\
&\leq  \max\bigg\{\mathcal{W}_d(t_0)~, \maxhd\max_{\substack{|t-s|\leq t_0\\s,t\in[0,\qT]}} |W(t) - W(s)| \bigg\}
\end{align*}
as $q(t_0)\leq t_0$ and $t-q(q^{-1}(t)-t_0)\leq t_0$ .
Hence, we obtain
\begin{align*}
\Pb \big( \mathcal{W}_d(t_0) \leq u_d(x) \big) - \Pb \big( \mathcal{W}_d  \leq u_d(x)  \big)
\leq\;
&\Pb \bigg( \maxhd \max_{\substack{|t-s|\leq t_0\\s,t\in[0,\qT]}} |W_h(t) - W_h(s)| >u_d(x) \bigg)\\
\leq \;&
d \;\Pb \bigg( \max_{\substack{|t-s|\leq t_0\\s,t\in[0,\qT]}} |W_1(t) - W_1(s)| >u_d(x) \bigg)~.
\end{align*}
To control this probability we define an overlapping decomposition of the interval $[0,\qT]$ by
$$
I_j:= [jt_0,(j+2)t_0] ~, \quad j=0,1,2,\dots,\ceil{\qT/t_0}-2~.
$$
Observing that the length of $I_j$ is $2t_0$ we obtain
\begin{align*}
&\Pb\bigg( \max_{\substack{|t-s|\leq t_0\\s,t\in[0,\qT]}} |W_1(t) - W_1(s)| > u_d(x) \bigg)\nonumber\\
\leq &\sum_{j=1}^{\ceil{\qT/t_0}-2} \Pb\bigg( \max_{\substack{|t-s|\leq t_0\\s,t\in I_j}} |W_1(t) - W_1(s)| > u_d(x) \bigg)\nonumber\\
\leq \; &\dfrac{\qT}{t_0} \Pb\bigg( \max_{\substack{|t-s|\leq t_0\\s,t\in [0,2t_0]}} |W_1(t) - W_1(s)| > u_d(x) \bigg)\nonumber\\
\leq \;&\dfrac{\qT}{t_0} \Pb\bigg( \max_{s,t\in [0,2t_0]} |W_1(t) - W_1(s)| > u_d(x) \bigg)\nonumber\\
= &\dfrac{\qT}{t_0} \Pb\bigg( \max_{s,t\in [0,\qT]} |W_1(t) - W_1(s)| > u_d(x) \cdot \sqrt{\qT/(2t_0)}\bigg)\\
\leq &\dfrac{\qT}{t_0} \Pb\bigg( \max_{s,t\in [0,\qT]} |W_1(t) - W_1(s)| > c\sqrt{\log(d)\qT/(2t_0)} \bigg)
\end{align*}
as for fixed $x$ \eqref{eq:adbd} yields, that there exists a positive constant $c<\sqrt{2\qT}$, such that $u_d(x) \geq c \cdot \sqrt{\log(d)}$ for $d$ sufficiently large.
Using the representation of the distribution function $\FM$ in \eqref{eq:repF} we obtain
\begin{align*}
\Pb\bigg( \maxhd \max_{\substack{|t-s|\leq t_0\\s,t\in[0,\qT]}} |W_h(t) - W_h(s)| > u_d(x) \bigg)
\leq d \dfrac{\qT}{t_0}\left[1-\FM\Big(c\sqrt{\log(d)\qT/(2t_0)}\Big)\right]
\end{align*}
and L'H\^{o}spital's rule gives 
\begin{align*}
\limd d\left[1-\FM\Big(c\sqrt{\log(d)\qT/(2t_0)}\Big)\right]
&= c\sqrt{\qT/(2t_0)} \lim_{d\rightarrow\infty} d \dfrac{\FM'\Big(c\sqrt{\log(d)\qT/(2t_0)}\Big)}{2\sqrt{\log(d)}}\\
&\leq c\sqrt{\qT/(2t_0)} \lim_{d\rightarrow\infty} d \FM'\Big(c\sqrt{\log(d)\qT/(2t_0)}\Big)\,.
\end{align*}
Now substituting $d = \exp\Big(\dfrac{2y^2t_0}{c^2\qT}\Big)$ yields that the last display can be written as
\begin{align*}
c\sqrt{\qT/(2t_0)} \lim_{y \to \infty} \exp\Big(\dfrac{2y^2t_0}{c^2\qT}\Big) \FM'(y)\,,
\end{align*}
which by assertion \eqref{eq:F1} tends to zero for sufficiently small $t_0>0$ and thus completes the proof of Lemma \ref{lem:removeTrunc}.
\end{proof}


\subsection{Proof of Theorem \ref{thm:alternative}}
\label{seca4}
Denote the size of the change by $\Delta\mu_m=|\mu_{m+k^*-1,h^*} - \mu_{m+k^*,h^*}|$ and the centered observations in component $h^*$ by
$$ 
X_{t,h^*}^{(c)} := X_{t,h^*} - \E[X_{t,h^*}]~.
$$
Observe the following lower bound
\begin{align}\label{ineq:alternative1}
\quad \hatTmd 
&= \maxhd \maxkTm w(k/m)\hatE_{m,h}(k) \overset{h=h^*}{\geq} \maxkTm w(k/m)\hatE_{m,h^*}(k)
\overset{k=mT}{\geq} w(T)\hatE_{m,h^*}(mT)\notag\\
&= \dfrac{1}{1+T}\max_{j=0}^{Tm-1} \dfrac{mT-j}{\sqrt{m}\hatsigma_h} \Big|\hatmu_{m+j+1}^{m+mT}(h^*) - \hatmu_{1}^{m+j}(h^*) \Big| \notag\\
&\overset{j=k^*-1}{\geq} \dfrac{1}{1+T}\dfrac{mT-k^*+1}{\sqrt{m}\hatsigma_h} \Big|\hatmu_{m+k^*}^{m+mT}(h^*) - \hatmu_{1}^{m+k^*-1}(h^*) \Big|\notag\\
&\geq \dfrac{1}{1+T}\bigg\{\dfrac{mT-k^*+1}{\sqrt{m}\hatsigma_h}\Delta\mu_m 
- \bigg\vert\dfrac{1}{\sqrt{m}\hatsigma_h} \sum_{t=m+k^*}^{m+mT} X_{t,h^*}^{(c)}
- \dfrac{mT-k^*+1}{\sqrt{m}\hatsigma_h(m+k^*-1)} \sum_{t=1}^{m+k^*-1} X_{t,h^*}^{(c)} \bigg\vert \bigg\}\,.
\end{align}
The consistency of the long-run variance estimator $\hatsigma_h$, Assumption \ref{assump:temporalDependence},
 the FCLT in Theorem~3 of \cite{Wu2005} and   the Continuous Mapping Theorem show that 
\begin{align}\label{conv:thmaltub}
\begin{split}
\bigg\vert\dfrac{1}{\sqrt{m}\hatsigma_h} \sum_{t=m+k^*}^{m+mT} &X_{t,h^*}^{(c)}
- \dfrac{mT-k^*+1}{\sqrt{m}\hatsigma_h(m+k^*-1)} \sum_{t=1}^{m+k^*-1} X_{t,h^*}^{(c)} \bigg\vert\\
&\leq \max_{s \in [0,T]} \bigg|\dfrac{1}{\sqrt{m}\hatsigma_h} \sum_{t=m+\floor{ms}+1}^{m+mT} X_{t,h^*}^{(c)}
- \dfrac{mT-\floor{ms}}{\sqrt{m}\hatsigma_h(m+\floor{ms})} \sum_{t=1}^{m+\floor{ms}} X_{t,h^*}^{(c)} \bigg|\\
&\hspace{0.5cm}\convd \max_{s \in [0,T]}\big|W(1+T) - W(1+s) - \dfrac{T-s}{1+s}W(1+s) \big|~,
\end{split}
\end{align}
where $W$ is a standard one-dimensional Brownian motion.
Next, note that \eqref{ineq:alternative1} gives that
{\footnotesize \begin{align*}
&\Pb\bigg( a_d \big(\hatTmd - b_d\big) > g_{1-\alpha} \bigg)
= \Pb\bigg( \hatTmd > \dfrac{g_{1-\alpha}}{a_d} + b_d \bigg)\\
&\geq \Pb\bigg( -\bigg| \dfrac{1}{\sqrt{m}\hatsigma_h}\sum_{t=m+k^*}^{m+mT} X_{t,h}^{(c)}
- \dfrac{mT-k^*+1}{\sqrt{m}\hatsigma_h(m+k^*-1)}\sum_{t=1}^{m+k^*-1} X_{t,h}^{(c)}\bigg|> \Big(\dfrac{g_{1-\alpha}}{a_d} + b_d\Big) (T+1) - \dfrac{mT-k^*+1}{\sqrt{m}\hatsigma_h}\Delta\mu_m \bigg).
\end{align*}}
By Assumption \ref{assump:model} \eqref{assump:D1} and \eqref{eq:adbd} we obtain $ b_d \lesssim \sqrt{\log m}$. 
Applying now \eqref{assump:alt1} we get
\begin{align}\label{conv:thmaltlb}
\Big(\dfrac{g_{1-\alpha}}{a_d} + b_d\Big) (T+1) - \dfrac{mT-k^*+1}{\sqrt{m}\hatsigma_h}\Delta\mu_m \convp -\infty~.
\end{align}
Combining  \eqref{conv:thmaltub}, \eqref{conv:thmaltlb} with an application of Slutsky's Theorem shows that the probability  $\Pb\big( a_d \big(\hatTmd - b_d\big) > g_{1-\alpha} \big)$ tends to 1, which completes the proof.

\renewcommand{\qedsymbol}{}
\end{proof}

\subsection{Proof of Corollary \ref{cor:Gumbel}}
The result  is obtained analogously to the corresponding parts of Theorem 2.5 in \cite{Jirak2015} or Theorem 3.11 in \cite{Dette2018}.
Therefore the proof is omitted. 

\subsection{Proof of Theorem \ref{thm:Bootstrap}}
Recall the definition of $u_d(x) = x/a_d + b_d$,   $Z_{t,h}$, $\hatZ_{t,h}$, $\TmdZ$, $\TmdZ(c)$ and $\hatTmdZ(c)$
in \eqref{def:udx}, \eqref{def:GaussFields}, \eqref{eq:CovarBootstrapZ},    \eqref{def:TmdZs} and \eqref{def:truncatedTmds}, respectively.
The proof of Theorem \ref{thm:Bootstrap} is based on the following three Lemmas.

\begin{lemma}\label{lem:corrConvergence}
For the constant $C_\gamma$ from Assumption \ref{assump:Bootstrap} it holds that
\begin{align}\label{ineq:uniformCorr}
m^{C_\gamma}\max_{h,i=1}^d \big| \hat{\rho}_{h,i} - \rho_{h,i}  \big| = o_\Pb(1)~.
\end{align}
\end{lemma}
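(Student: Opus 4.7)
The plan is to exploit the elementary algebraic identity
\begin{align*}
\hat{\rho}_{h,i} - \rho_{h,i}
= \frac{\hat{\gamma}_{h,i} - \gamma_{h,i}}{\hat{\sigma}_h \hat{\sigma}_i}
\;+\; \gamma_{h,i}\,\frac{\sigma_i(\sigma_h - \hat{\sigma}_h) + \hat{\sigma}_h (\sigma_i - \hat{\sigma}_i)}{\hat{\sigma}_h \hat{\sigma}_i \sigma_h \sigma_i}
\end{align*}
for $h\ne i$ (the case $h=i$ is trivial, both sides being $0$). The required deviation control for the numerator quantities $\hat{\gamma}_{h,i}-\gamma_{h,i}$ and $\hat{\sigma}_h-\sigma_h$ is precisely what Assumptions \ref{assump:varEstimator} and \ref{assump:Bootstrap} supply, so the main task is to bound the denominators from below and the factor $|\gamma_{h,i}|$ from above, uniformly in $h,i$.

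For the denominator, Assumption \ref{assump:temporalDependence}\eqref{assump:TD2} already gives $\sigma_h \geq c_\sigma$, and on the event $E_m^{(\sigma)}:=\{\maxhd|\hat{\sigma}_h-\sigma_h|<m^{-\delta_\sigma}\}$ one has $\hat{\sigma}_h\geq c_\sigma/2$ for $m$ large. For the factor $|\gamma_{h,i}|$ I first need a uniform upper bound $\sup_{h\in\N}\sigma_h\le C_\sigma^+<\infty$, which is not explicitly assumed but follows from the physical-dependence hypothesis: standard estimates for causal systems give $\|X_{0,h}\|_2\leq\sum_{t\geq 0}\vartheta_{t,h,2}\leq C_\vartheta\sum_{t\geq 0}\beta^t<\infty$ by \eqref{assump:TD1}, and hence, as already noted in \eqref{bound:crosscovariance} via Lemma E4 of \cite{Jirak2015}, $\sup_{h_1,h_2\in\N}\sum_{t\in\Z}|\phi_{t,h_1,h_2}|<\infty$, so in particular $\sigma_h^2=\sum_t\phi_{t,h,h}$ is uniformly bounded. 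This immediately yields $|\gamma_{h,i}|\leq\sigma_h\sigma_i\leq (C_\sigma^+)^2$.

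Combining these bounds, on the intersection $E_m:=E_m^{(\sigma)}\cap E_m^{(\gamma)}$ with $E_m^{(\gamma)}:=\{\max_{h,i=1}^d|\hat{\gamma}_{h,i}-\gamma_{h,i}|<m^{-\delta_\gamma}\}$, the identity above yields the deterministic estimate
\begin{align*}
\max_{h,i=1}^d \big|\hat{\rho}_{h,i} - \rho_{h,i}\big|
\;\lesssim\; m^{-\delta_\gamma} + m^{-\delta_\sigma}\,,
\end{align*}
with the implicit constant depending only on $c_\sigma$ and $C_\sigma^+$. A union bound based on Assumptions \ref{assump:varEstimator} and \ref{assump:Bootstrap} gives $\Pb(E_m^c)\leq c_\sigma' m^{-C_\sigma'}+c_\gamma m^{-C_\gamma}\to 0$. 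Multiplying by $m^{C_\gamma}$ then produces
$m^{C_\gamma}\max_{h,i}|\hat{\rho}_{h,i}-\rho_{h,i}|=O_\Pb\big(m^{C_\gamma-\min(\delta_\sigma,\delta_\gamma)}\big)=o_\Pb(1)$, provided $C_\gamma<\min(\delta_\sigma,\delta_\gamma)$. This inequality is exactly what is encoded by the phrase "sufficiently small constants" in both assumptions, so there is no genuine obstacle beyond bookkeeping; the only nontrivial input is the uniform upper bound on $\sigma_h$, which is immediate from the physical-system framework.
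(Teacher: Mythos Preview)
Your proof is correct and follows essentially the same approach as the paper. Both arguments reduce $|\hat{\rho}_{h,i}-\rho_{h,i}|$ to a combination of $\max_{h,i}|\hat{\gamma}_{h,i}-\gamma_{h,i}|$ and $\max_h|\hat{\sigma}_h-\sigma_h|$ via an elementary algebraic decomposition, use the uniform lower bound $c_\sigma$ on $\sigma_h$ from \eqref{assump:TD2} together with the uniform upper bound on $\sigma_h$ coming from Lemma~E.4 of \cite{Jirak2015}, and then invoke Assumptions~\ref{assump:varEstimator} and~\ref{assump:Bootstrap}; the only cosmetic difference is that the paper writes the decomposition as $\hat{\rho}_{h,i}-\rho_{h,i}=\big(\tfrac{\hat{\gamma}_{h,i}}{\hat{\sigma}_h\hat{\sigma}_i}-\tfrac{\hat{\gamma}_{h,i}}{\sigma_h\sigma_i}\big)+\tfrac{\hat{\gamma}_{h,i}-\gamma_{h,i}}{\sigma_h\sigma_i}$ rather than your version, and leaves the ``sufficiently small'' condition $C_\gamma<\min(\delta_\sigma,\delta_\gamma)$ implicit where you spell it out.
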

\begin{proof}
First, note that Lemma E.4 from \cite{Jirak2015supp} implies the existence of a global constant $C_\sigma >0$, such that
$\sup_{h\in \N} \sigma_{h} \leq C_\sigma$.
Next, Assumption \ref{assump:temporalDependence} and the Cauchy-Schwarz inequality imply that
\begin{enumerate}[label=(\roman*)]
\item $\maxhid \gamma_{h,i}
\leq \sigma_h\sigma_i
\leq C_\sigma^2 $~,
\item $\minhd \hatsigma_h
\geq \minhd \sigma_h - \maxhd |\hatsigma_h - \sigma_h |
\geq c_\sigma - \maxhd |\hatsigma_h - \sigma_h |$~,
\item $
\maxhid |\hatsigma_h\hatsigma_i - \sigma_h\sigma_i|
\leq \maxhid \hatsigma_i|\hatsigma_h - \sigma_h| + C_\sigma\maxhd |\hatsigma_h - \sigma_h|\\
\phantom{a}\hspace{3cm}\leq C_\sigma \maxhd |\hatsigma_h - \sigma_h|^2 + 2C_\sigma\maxhd |\hatsigma_h - \sigma_h|~.$
\end{enumerate}
Combining (i), (ii) and using again Assumption \ref{assump:temporalDependence}
gives
\begin{align*}
\maxhid \bigg|\dfrac{\hat{\gamma}_{h,i}}{\hat{\sigma}_h\hat{\sigma}_i} \bigg|
\leq \dfrac{1}{\big(c_\sigma - \maxhd |\hatsigma_h - \sigma_h |\big)^2}
\cdot \Big( C_\sigma^2 + \maxhid \big| \hatgamma_{h,i} -\gamma_{h,i}\big|\Big) = O_\Pb(1)~.
\end{align*}
Thus  we obtain the upper bound
\begin{align*}
\maxhid \big| \hat{\rho}_{h,i} - \rho_{h,i}  \big|
&\leq \maxhid \bigg| \dfrac{\hat{\gamma}_{h,i}}{\hat{\sigma}_h\hat{\sigma}_i} - \dfrac{\hat{\gamma}_{h,i}}{\sigma_h\sigma_i} \bigg|
+ \maxhid \bigg| \dfrac{\hat{\gamma}_{h,i}-\gamma_{h,i}}{\sigma_h\sigma_i} \bigg|\\
& \leq \dfrac{1}{c^2_\sigma}\maxhid \bigg| \dfrac{\hat{\gamma}_{h,i}}{\hat{\sigma}_h\hat{\sigma}_i} \bigg| \big|\hat{\sigma}_h\hat{\sigma}_i-\sigma_h\sigma_i \big|
+  \dfrac{1}{c^2_\sigma}\maxhid \big|\hat{\gamma}_{h,i}-\gamma_{h,i} \big|\\
&\lesssim O_\Pb(1)\maxhd |\hatsigma_h - \sigma_h|^2 + O_\Pb(1)\maxhd |\hatsigma_h - \sigma_h|
+ \maxhid \big|\hat{\gamma}_{h,i}-\gamma_{h,i} \big|
\,.
\end{align*}
The assertion of Lemma \ref{lem:corrConvergence} now follows from Assumption \ref{assump:Bootstrap}.
\end{proof}

\medskip
\begin{lemma}\label{lem:BSLemma2}
There exists a sufficiently small constant $t_0>0$, such that for $x \in \R$ it holds
\begin{align*}
\bigg| \PbX \Big( \hatTmdZ(t_0) \leq u_d(x) \Big) 
-  \PbX\Big( \hatTmdZ \leq u_d(x) \Big) \bigg|
= o_{\Pb}(1)\,, \quad \text{ as } m,d\to \infty\,.
\end{align*}
\end{lemma}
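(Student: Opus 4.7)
The plan is to mirror the decomposition and tail-bound argument of Lemma \ref{lem:Truncation}, exploiting that conditionally on $\mathcal{X}$ the family $\{\hatZ_{t,h}\}$ consists of temporally independent Gaussian variables with unit variance. This lets me replace the Nagaev-type bound used for $\Tmd$ with a direct Gaussian tail bound. Because the conditional marginal variance of each $\hatZ_{t,h}$ is exactly $1$ (not affected by the estimated correlations $\hat{\rho}_{h,i}$), all resulting estimates will be uniform in $\mathcal{X}$, so that $o(1)$ deterministically will imply the required $o_{\Pb}(1)$.

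First I would write $\hatTmdZ = \max\{\hatTmdZ(t_0), R_1, R_2\}$, where $R_1$ is the maximum over the region $\{1\le k\le t_0 m,\, 0\le j\le k-1\}$ and $R_2$ is the maximum over $\{t_0 m < k \le Tm,\, k - t_0 m \le j \le k-1\}$. Then
\begin{align*}
\left|\PbX(\hatTmdZ(t_0) \le u_d(x)) - \PbX(\hatTmdZ \le u_d(x))\right| \le \PbX(R_1 > u_d(x)) + \PbX(R_2 > u_d(x)),
\end{align*}
so it suffices to show both right-hand terms are $o_{\Pb}(1)$.

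Next I would compute the conditional variance of a single summand. Since the $\hatZ_{t,h}$ are independent in time with conditional variance one and the index sets $\{1,\dots,m+j\}$ and $\{m+j+1,\dots,m+k\}$ are disjoint,
\begin{align*}
\Var\!\Big(\tfrac{(k-j)w(k/m)}{\sqrt{m}}\big(\hat{z}_{m+j+1}^{m+k}(h) - \hat{z}_1^{m+j}(h)\big)\,\Big|\,\mathcal{X}\Big) = \tfrac{(k-j)w(k/m)^2}{m}\Big(1 + \tfrac{k-j}{m+j}\Big).
\end{align*}
On both $R_1$ and $R_2$ the index ranges enforce $k-j \le t_0 m$ and $m+j \ge m$, so this variance is bounded by $t_0(1+t_0)$ uniformly in $\mathcal{X}$ and in $(k,j,h)$.

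I would then apply the Gaussian tail bound $\Pb(|Z|>u) \le 2\exp(-u^2/(2\sigma^2))$ coordinate-wise and take a union bound over the $O(d m^2)$ triples $(k,j,h)$ in each region. Using $u_d(x)^2 \sim 2q\log d$ together with $m \le C_D^{1/2}d^{C_D}$ from Assumption \ref{assump:model} \eqref{assump:D1}, this yields
\begin{align*}
\PbX(R_i > u_d(x)) \lesssim d\,m^2 \exp\!\Big(-\tfrac{u_d(x)^2}{2 t_0(1+t_0)}\Big) \lesssim d^{1 + 2 C_D - q/(2 t_0(1+t_0))}\,,
\end{align*}
for $i=1,2$. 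Choosing $t_0>0$ small enough that the exponent is strictly negative makes the right-hand side vanish deterministically in $m,d$, which is stronger than the required $o_{\Pb}(1)$ conclusion.

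The only mild obstacle is checking that a single threshold $t_0$ gives the variance bound $t_0(1+t_0)$ in both regions $R_1$ and $R_2$ simultaneously; once this book-keeping is done, the argument reduces to an elementary Gaussian union bound, noticeably simpler than the Nagaev-based estimate in Lemma \ref{lem:Truncation}. Notably, the estimated correlations $\hat\rho_{h,i}$ play no role here, so Lemma \ref{lem:corrConvergence} is not needed for this particular step.
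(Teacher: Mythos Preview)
Your proposal is correct and follows essentially the same decomposition as the paper's proof: write $\hatTmdZ=\max\{\hatTmdZ(t_0),R_1,R_2\}$ and bound $\PbX(R_i>u_d(x))$. The paper passes to unconditional probabilities via Markov's inequality and then refers back to the Nagaev-type estimates of Lemma~\ref{lem:Truncation}, whereas you exploit directly that, conditionally on $\mathcal{X}$, each summand is Gaussian with a deterministic variance, so a union bound plus the Gaussian tail yields a deterministic $o(1)$ estimate; this is a mild streamlining of the same argument.
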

\begin{proof}
We provide a (stochastic) version of the proof of Lemma \ref{lem:Truncation}.
First note that
\begin{align*}
\hatTmdZ
&= \maxhd \max_{k=1}^{Tm} \max_{j=0}^{k-1}  \frac{(k-j)w(k/m)}{\sqrt{m}}\Big|\hatmu_{m+j+1}^{m+k}(h) - \hatmu_1^{m+j}(h) \Big|\\
&=  \max \bigg\{\hatTmdZ(t_0)~,  \maxhd \max_{k=t_0m+1}^{Tm} \max_{j=k-t_0m}^{k-1} \frac{(k-j)w(k/m)}{\sqrt{m}}\Big|\hat{z}_{m+j+1}^{m+k}(h) - \hat{z}_1^{m+j}(h) \Big|~,\\
&\hspace{4.7cm}\max_{h=1}^d\max_{k=1}^{t_0m} \max_{j=0}^{k-1} \frac{(k-j)w(k/m)}{\sqrt{m}}\Big|\hat{z}_{m+j+1}^{m+k}(h) - \hat{z}_1^{m+j}(h) \Big|\bigg\}~.
\end{align*}
Hence, we obtain
\begin{align}\label{ineq:trunc1stochastic}
\Big|\PbX\Big(\hatTmdZ(t_0) \leq u_d(x) \Big) - \PbX\Big(\hatTmdZ \leq u_d(x) \Big)\Big|
\leq P_1(x) + P_2(x)~,
\end{align}
where the random variables $P_1(x)$  and $P_2(x)$ are defined by
\begin{align*}
P_1(x)
&= \PbX\Big( \maxhd \max_{k=t_0m+1}^{Tm} \max_{j=k-t_0m}^{k-1} \frac{(k-j)}{\sqrt{m}}\Big|\hat{z}_{m+j+1}^{m+k}(h) - \hat{z}_1^{m+j}(h) \Big|\geq u_d(x)\Big)~,\\
P_2(x)
&= \PbX\Big(\max_{h=1}^d\max_{k=1}^{t_0m} \max_{j=0}^{k-1} \frac{(k-j)}{\sqrt{m}}\Big|\hat{z}_{m+j+1}^{m+k}(h) - \hat{z}_1^{m+j}(h) \Big|\geq u_d(x)\Big)
\end{align*}
and we additionally used that $w(k/m)\leq 1$.
To complete the proof, it suffices by Markov's inequality to establish that
$$\E[P_1(x)]=o(1)
\;\;\;\text{and}\;\;\;
\E[P_2(x)] = o(1)~.$$
To prove these assertions, observe the bounds
\begin{align}\label{ineq:truncBootstrap1}
\begin{split}
\E[P_1(x)]
&= \Pb\Big( \maxhd \max_{k=t_0m+1}^{Tm} \max_{j=k-t_0m}^{k-1} \frac{(k-j)}{\sqrt{m}}\Big|\hat{z}_{m+j+1}^{m+k}(h) - \hat{z}_1^{m+j}(h) \Big|\geq u_d(x)\Big)\\
&\leq \sum_{h=1}^d \Pb\Big(\max_{k=t_0m+1}^{Tm} \max_{j=k-t_0m}^{k-1} \frac{(k-j)}{\sqrt{m}}\Big|\hat{z}_{m+j+1}^{m+k}(h) - \hat{z}_1^{m+j}(h) \Big|\geq u_d(x)\Big)
\end{split}
\end{align}
and
\begin{align}\label{ineq:truncBootstrap2}
\begin{split}
\E[P_2(x)]
&= \Pb\Big(\max_{h=1}^d\max_{k=1}^{t_0m} \max_{j=0}^{k-1} \frac{(k-j)}{\sqrt{m}}\Big|\hat{z}_{m+j+1}^{m+k}(h) - \hat{z}_1^{m+j}(h) \Big|\geq u_d(x)\Big)\\
&\leq \sum_{h=1}^d\Pb\Big(\max_{k=1}^{t_0m} \max_{j=0}^{k-1} \frac{(k-j)}{\sqrt{m}}\Big|\hat{z}_{m+j+1}^{m+k}(h) - \hat{z}_1^{m+j}(h) \Big|\geq u_d(x)\Big)~.
\end{split}
\end{align}
The terms in \eqref{ineq:truncBootstrap1} and \eqref{ineq:truncBootstrap2} can now be controlled by 
the same arguments as given in   the proof of Lemma \ref{lem:Truncation}.
\end{proof}
\medskip

\begin{lemma}\label{lem:BSnotuniform}
For $x\in \R$ it holds that
\begin{align}\label{eq:BSnotuniform}
\bigg| \Pb_{|\mathcal{X}}\Big( \hatTmdZ\leq u_d(x) \Big) 
-  \Pb_{H_0}\Big( \Tmd \leq u_d(x) \Big) \bigg|
= o_{\Pb}(1)\,, \quad \text{ as } m,d\to \infty\,.
\end{align}
\end{lemma}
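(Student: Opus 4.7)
The plan is to show that both probabilities in \eqref{eq:BSnotuniform} converge to the Gumbel limit $F_G(x)=\exp(-\exp(-x))$, deterministically on the right and in $\Pb$-probability on the left, and then conclude by the triangle inequality. The right-hand side is already handled by Theorem \ref{thm:mainGumbel}, since $\Tmd$ uses the true long-run variances $\sigma_h$ and so its statement does not invoke Assumption \ref{assump:varEstimator}. The task therefore reduces to proving
\[
\PbX\bigl(\hatTmdZ\leq u_d(x)\bigr)\;\stackrel{\Pb}{\longrightarrow}\;F_G(x)\quad\text{as }m,d\to\infty,
\]
which I would do by replicating, for the bootstrap statistic, the five-step chain used to prove Theorem \ref{thm:mainGumbel}.

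First, by Lemma \ref{lem:BSLemma2} the difference $\PbX(\hatTmdZ\leq u_d(x))-\PbX(\hatTmdZ(t_0)\leq u_d(x))$ is $o_\Pb(1)$ for sufficiently small $t_0>0$. Second, and this is the only genuinely new step, I would compare the conditional law of $\hatTmdZ(t_0)$ with the unconditional law of $\TmdZ(t_0)$. Using the vectorization from the proof of Lemma \ref{lem:gaussianapprox}, both statistics take the form $\max_{i\leq\dV}\hatVz_i$ and $\max_{i\leq\dV}\Vz_i$ of centered Gaussian vectors of common dimension $\dV=O(dm^2)$. A direct reprise of the calculation in \eqref{eq:covVZ} shows that $\Cov_{|\mathcal X}(\hatVz_{i_1},\hatVz_{i_2})$ is given by the right-hand side of \eqref{eq:covVZ} with each $\rho_{h_1,h_2}$ replaced by $\hat\rho_{h_1,h_2}$. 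The diagonal terms are unchanged since $\hat\rho_{h,h}=1=\rho_{h,h}$, so the lower variance bound \eqref{ineq:varzh1} continues to hold on the bootstrap side. Combining \eqref{ineq:a1mk}, \eqref{ineq:a2mkj} with Lemma \ref{lem:corrConvergence} I obtain
\[
\Delta_m:=\max_{i_1,i_2\leq\dV}\bigl|\Cov_{|\mathcal X}(\hatVz_{i_1},\hatVz_{i_2})-\Cov(\Vz_{i_1},\Vz_{i_2})\bigr|\lesssim\maxhid|\hat\rho_{h,i}-\rho_{h,i}|=o_\Pb(m^{-C_\gamma}),
\]
and Lemma \ref{lem:ineqcherno2} then yields $\sup_x\bigl|\PbX(\hatTmdZ(t_0)\leq x)-\Pb(\TmdZ(t_0)\leq x)\bigr|\lesssim\Delta_m^{1/3}\{\log(\dV/\Delta_m)\}^{2/3}=o_\Pb(1)$, because $\log\dV\lesssim\log m$ by Assumption \ref{assump:model} and the polynomial rate $m^{-C_\gamma}$ dominates the logarithmic factor after the cube root.

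Third, Lemmas \ref{lem:relaxCorr}, \ref{lem:discretize}, and \ref{lem:removeTrunc} can be invoked verbatim to replace $\TmdZ(t_0)$ successively by $\tildeTmdZ(t_0)$, $\Wd(t_0)$, and $\Wd$ up to deterministic $o(1)$ errors, and Theorem \ref{thm:gumbelWiener.dep} gives $\Pb(\Wd\leq u_d(x))\to F_G(x)$. Chaining these four reductions with the bound from the second step establishes the required conditional convergence and completes the plan. I expect the only subtle part to be the Gaussian comparison in the second step: one must verify that the polynomial rate in Lemma \ref{lem:corrConvergence} survives the cube root in the Chernozhukov-type comparison and controls the $\log\dV$ factor coming from the vectorized dimension $\dV=O(dm^2)$, and one must confirm that the positive-variance hypothesis \eqref{ineq:varzh1} carries over to the bootstrap vector, which it does because $\hat\rho_{h,h}=1$ identically.
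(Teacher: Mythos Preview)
Your proposal is correct and follows essentially the same approach as the paper: the crucial new step is exactly the Gaussian comparison between $\hatTmdZ(t_0)$ and $\TmdZ(t_0)$ via the vectorization of Lemma~\ref{lem:gaussianapprox}, the covariance bound $\Delta_m\lesssim\maxhid|\hat\rho_{h,i}-\rho_{h,i}|$, Lemma~\ref{lem:corrConvergence}, and Lemma~\ref{lem:ineqcherno2}. The only cosmetic difference is in the final chaining: after establishing $\PbX(\hatTmdZ(t_0)\leq u_d(x))\approx\Pb(\TmdZ(t_0)\leq u_d(x))$, the paper goes \emph{back} to $\Tmd$ via Lemmas~\ref{lem:gaussianapprox} and~\ref{lem:Truncation} and concludes directly, whereas you continue \emph{forward} from $\TmdZ(t_0)$ to the Gumbel limit via Lemmas~\ref{lem:relaxCorr}--\ref{lem:removeTrunc} and Theorem~\ref{thm:gumbelWiener.dep}, and separately send $\Tmd$ to $F_G(x)$ by Theorem~\ref{thm:mainGumbel}; both routes are valid and the paper's is marginally shorter.
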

\begin{proof}
Observing Lemmas \ref{lem:Truncation}, \ref{lem:gaussianapprox}, \ref{lem:BSLemma2}, the assertion of Lemma \ref{lem:BSnotuniform} follows, if we can establish that
\begin{align}\label{eq:ToShowBSTheorem}
\bigg| \PbX \Big( \hatTmdZ(t_0) \leq u_d(x) \Big) 
-  \Pb\Big( \TmdZ(t_0) \leq u_d(x) \Big) \bigg|
= o_{\Pb}(1)~,
\end{align}
To obtain this, we will reuse the vector technique applied in the proof of Lemma \ref{lem:gaussianapprox}.
>From the proof of this Lemma recall the definition and construction of the Gaussian vector $\bVz =\Big(\Vz_1,\dots,\Vz_{\dV}\Big)^\top $ which fulfilled the identity
\begin{align*}
\max_{i=1}^{\dV} \Vz_i = \TmdZ(t_0)~.
\end{align*}
Analogously we construct a vector $\hatbVz = \Big(\hatVz_1,\dots,\hatVz_{\dV}\Big)^\top $ from $\big\{\hatZ_{t,h}\big\}$, such that
\begin{align*}
\max_{i=1}^{d_{\bm{V}}} \hatVz_i = \hatTmdZ(t_0)~.
\end{align*}
The covariance structure of $\bVz$ was already calculated in Lemma \ref{lem:gaussianapprox}.
Repeating these steps for the conditional covariance structure of $\hatbVz$ with respect to $\mathcal{X}$, we directly obtain that
\begin{align}\label{ineq:BSCovStructure}
\max_{i_1,i_2=1}^{d_{\bm{V}}}
\Big| \Cov\big(\Vz_{i,1},\,\Vz_{i,2} \big) - 
\Cov_{|\mathcal{X}}\big(\hatVz_{i,1},\,\hatVz_{i,2} \big) \Big|
\lesssim \max_{h,i=1}^d \big| \hat{\rho}_{h,i} - \rho_{h,i}  \big|~.
\end{align}
In the remainder of the proof we use the notation $\Delta_\rho
= \max_{h,i=1}^d \big| \hat{\rho}_{h,i} - \rho_{h,i}  \big|$. 
In view of \eqref{ineq:BSCovStructure}, we are able to apply the Gaussian comparison inequality from Lemma \ref{lem:ineqcherno2}, which gives
\begin{align*}
&\sup_{x \in \R}\bigg|  \PbX \Big( \hatTmdZ(t_0) \leq x \Big) 
-  \Pb\Big( \TmdZ \leq x \Big) \bigg|\\
= &\sup_{x \in \R}\bigg| \PbX \Big(\max_{i=1}^{\dV} \hatVz_i \leq x \Big) 
-  \Pb\Big( \max_{i=1}^{\dV} \Vz_i \leq x \Big) \bigg|
\leq C \Delta_\rho^{1/3} \cdot \max\Big\{1,\,\log\big(\dV/\Delta_\rho\big)\Big\}^{2/3}.
\end{align*}
Due to Lemma \ref{lem:corrConvergence} and Assumption \ref{assump:model} the upper bound in the last display is of order $o_\Pb(1)$, which proves \eqref{eq:ToShowBSTheorem}.
\end{proof}
\begin{proof}[Final step in proof of Theorem \ref{thm:Bootstrap}]
To obtain the theorem's assertions, note that from Corollary \ref{cor:Gumbel} we already know that
\begin{align}\label{conv:BootstrapGumbel}
a_d\big( \hatTmd - b_d) \convd G~,
\end{align}
and as the Gumbel distribution has a continuous c.d.f., Polya's theorem  [see \cite{serfling2009}, p. 18] directly implies convergence in Kolmogorov-metric, that is
\begin{align}\label{kolmog:bootstrap1}
\sup_{x \in \R} 
\bigg|\Pb\Big( a_d\big( \hatTmd - b_d) \leq u_d(x) \Big) - \Pb\big( G \leq x\big)\bigg|
= o(1)~.
\end{align}
On the other hand, combining \eqref{eq:BSnotuniform} with Theorem \ref{thm:mainGumbel} implies that
\begin{align}\label{kolmog:bootstrap2}
a_d\big( \hatTmdZ - b_d) \convd G~,
\end{align}
conditional on $\mathcal{X}$ in probability. So a conditional version of Polya's theorem gives
\begin{align*}
\sup_{x \in \R} \bigg|\PbX\Big( a_d\big( \hatTmdZ - b_d) \leq u_d(x) \Big) - \Pb\big( G \leq x\big)\bigg|
= o_{\Pb}(1)~.
\end{align*}
By \eqref{kolmog:bootstrap1} and \eqref{kolmog:bootstrap2} the proof of Theorem \ref{thm:Bootstrap} is complete.
\end{proof}

\subsection{Proof of Theorem \ref{thm:algorithmConsistent}}
Denote the centered observations by
$$
X_{t,h}^{(c)} = X_{t,h} -\E[X_{t,h}]~.
$$
We first prove assertions \eqref{ineq:SdSubset} and \eqref{eq:SdC} using the Gumbel quantile $g_{1-\alpha}$.\\[12pt]
\textbf{Proof of \eqref{ineq:SdSubset}:} 
It holds that
\begin{align*}
\Pb\Big( \mathcal{S}_d \subset \hatSdalpha   \Big)
&= \Pb\Big( \max_{h \in \mathcal{S}_d} \maxkTm w(k/m) \hatE_{m,h}(k) \leq g_{1-\alpha}/a_d + b_d \Big)\\
&= \Pb_{H_0}\Big( \max_{h \in \mathcal{S}_d} \maxkTm w(k/m) \hatE_{m,h}(k) \leq g_{1-\alpha}/a_d + b_d \Big)\\
&\geq \Pb_{H_0}\Big( \maxhd \maxkTm w(k/m) \hatE_{m,h}(k) \leq g_{1-\alpha}/a_d + b_d \Big) \longrightarrow 1 - \alpha~,
\end{align*}
where we applied Corollary \ref{cor:Gumbel} for the last convergence.\\[12pt]
\textbf{Proof of \eqref{eq:SdC}:} First, note that:
\begin{align}\label{eq:SedCProb}
\Pb\Big( \mathcal{S}_d^c \subset \hatSdalpha^c \Big)
= \Pb\Big( \min_{h \in \mathcal{S}_d^c} \maxkTm w(k/m) \hatE_{m,h}(k) > g_{1-\alpha}/a_d + b_d  \Big)~.
\end{align}
We have the lower bound 
\begin{align*}
&\min_{h \in \mathcal{S}_d^c} \maxkTm w(k/m) \hatE_{m,h}(k) 
\geq \min_{h \in \mathcal{S}_d^c} w(T) \hatE_{m,h}(Tm)\\
&\quad \geq \min_{h \in \mathcal{S}_d^c} \dfrac{Tm-k_h^*-1}{\sqrt{m}\hatsigma_h (T+1)} \big| \hatmu_{m+k_h^*}^{m+Tm} - \hatmu_{1}^{m+k_h^*-1} \big|
\geq  A_1 - A_2 ~,
\end{align*}
where the terms $A_1$ and $A_2$ are given by
\begin{align*}
A_1 &= \min_{h \in \mathcal{S}_d^c}  \dfrac{Tm-k_h^*+1}{\sqrt{m}\hatsigma_h(T+1)}\big|\mu_{m+k_h^*-1} - \mu_{m+k_h^*}\big| ~,\\
A_2 &= \max_{h \in \mathcal{S}_d^c} \dfrac{1}{\sqrt{m} \hatsigma_h(T+1)} \bigg| \sum_{t=m+k_h^*}^{m+Tm} X_{t,h}^{(c)} - \dfrac{mT - k_h^*+1}{m+k_h^*-1}\sum_{t=1}^{m+k_h^*-1}X_{t,h}^{(c)} \bigg|~.
\end{align*}
Therefore the probability given in \eqref{eq:SedCProb} has the lower bound
\begin{align*}
\Pb\big( a_d(A_1 - 2b_d) - a_d(A_2 - b_d) > g_{1-\alpha} \big)~.
\end{align*}
Using Corollary \ref{cor:Gumbel} we obtain that
\begin{align}\label{ineq:uniform1}
\begin{split}
a_d&\big(A_2 - b_d\big)\\
&\leq a_d \bigg( \maxhd \maxkTm w(k/m) \max_{j=0}^{k-1} 
\bigg| \sum_{t=m+j+1}^{m+k} X_{t,h}^{(c)} - \dfrac{k - j}{m+j}\sum_{t=1}^{m+j}X_{t,h}^{(c)}  \bigg| - b_d \bigg) = O_\Pb(1)~.
\end{split}
\end{align}
Further it holds by \eqref{assump:uniformConsistency} that for $m$ sufficiently large
\begin{align*}
Tm - \max_{h \in \mathcal{S}_d^c} k_h^* > c~,
\end{align*}
where $c>0$ is a sufficiently small constant.
By Assumptions \ref{assump:temporalDependence} and \ref{assump:Bootstrap} we have  
\begin{align*}
\dfrac{1}{\maxhd \hatsigma_h} 
\geq \dfrac{1}{C_\sigma + \maxhd| \hat\sigma_h -\sigma_h|}
\convp \dfrac{1}{C_\sigma}~
\end{align*}
and \eqref{eq:adbd} shows that $b_d \lesssim \sqrt{\log m}$ and $a_d \to \infty$.
Combining this with the assertions above yields
\begin{align}\label{ineq:uniform2}
a_d\big(A_1 - 2b_d\big)
\gtrsim a_d\bigg(\sqrt{m}\dfrac{1}{\maxhd \hatsigma_h} \min_{h \in \mathcal{S}_d^c} 
\big|\mu_{m+k_h^*-1} - \mu_{m+k_h^*}\big| - 2b_d \bigg)\convp \infty
\end{align}
A combination of \eqref{ineq:uniform1} and \eqref{ineq:uniform2} now proves \eqref{eq:SdC}.\\[12pt]
To complete the proof of Theorem \ref{thm:algorithmConsistent} it remains to discuss the case, where bootstrap quantiles
\begin{align*}
\hat{q}_{m,1-\alpha}:= \inf \Big\{ x \in \R \;\Big|\; \PbX\Big( a_d\big(\hatTmdZ - b_d \big) \leq x \Big) \geq \alpha\Big\}~,
\end{align*}
are used in the algorithm.
Fortunately, it follows from Theorem \ref{thm:Bootstrap} combined with Lemma~21.2 and (the arguments from) Lemma 23.2 in \cite{vanderVaart1998} that
\begin{align*}
\hat{q}_{m,1-\alpha} \convp g_{1-\alpha}~.
\end{align*}
An application of Slutsky's Lemma to the statements above then completes the proof of Theorem \ref{thm:algorithmConsistent}.

\section{Technical auxiliary results}

\noindent We require the following Nagaev-type inequality as given in the online supplement of \cite{Jirak2015} which is a version of Theorem 2 in \cite{Liu2013}.
In particular the reader should note that the bound is independent of $h$.\\

\begin{lemma}\label{lem:nagaev}
Under Assumption \ref{assump:temporalDependence} it holds for $x \geq C_p'\sqrt{n}$
\begin{align*}
\Pb\bigg( \max_{k=1}^n\Big|  \sum_{t=1}^k X_{t,h} &- \E[X_{t,h}] \Big| > x \bigg)\leq C_p \dfrac{n}{x^p} + C_p\exp\bigg( -c_p \dfrac{x^2}{n} \bigg)~,
\end{align*}
where the constants $c_p, C_p, C_p >0$ depend on $p$ and the sequence $\Big\{\sup\limits_{h \in \N} \vartheta_{t,h,p}\Big\}_{t \in \N}$ only.
\end{lemma}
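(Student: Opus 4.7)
The plan is to adapt the classical truncation argument of Nagaev, combined with a martingale-type decomposition exploiting the physical dependence structure \eqref{eq:physicalSystem}. Using the projection operators $P_s Z := \E[Z\,|\,\mathcal{F}_s] - \E[Z\,|\,\mathcal{F}_{s-1}]$ on the filtration $\mathcal{F}_s = \sigma(\varepsilon_s, \varepsilon_{s-1}, \ldots)$, I would write the telescoping decomposition
$$X_{t,h} - \E[X_{t,h}] \;=\; \sum_{j \geq 0} P_{t-j} X_{t,h}~,$$
so that the partial sum becomes $\sum_{t=1}^k (X_{t,h} - \E[X_{t,h}]) = \sum_{j \geq 0} M_{k,j}$ with $M_{k,j} := \sum_{t=1}^k P_{t-j} X_{t,h}$. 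For each fixed $j$, the sequence $\{P_{t-j} X_{t,h}\}_t$ is a stationary martingale difference sequence (w.r.t.\ $\mathcal{F}_t$), whose $L^p$ norm is controlled by the standard identity $\Vert P_{-j} X_{0,h} \Vert_p \leq \vartheta_{j,h,p}$ from \cite{Wu2005}. By Assumption \ref{assump:temporalDependence}\,\eqref{assump:TD1} this is uniformly bounded by $C_\vartheta \beta^j$ in $h$, which drives the geometric summability on which the whole argument rests.

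Next, for each block $M_{k,j}$ I would apply the usual double truncation: split each summand $P_{t-j}X_{t,h}$ into a bounded piece at level $\tau_j$ and an unbounded remainder. For the bounded piece, a Freedman/Bernstein-type maximal inequality for martingale differences yields an exponential bound of the form $C_p \exp\big(-c_p\, x_j^2 / (n\sigma_j^2 + \tau_j x_j)\big)$, where $\sigma_j^2 \lesssim \beta^{2j}$ is the conditional variance proxy. For the unbounded remainder, Doob's $L^p$ maximal inequality combined with the Rosenthal--Burkholder inequality for martingale differences yields the polynomial bound $C_p\, n\, \Vert P_{-j}X_{0,h}\Vert_p^p / x_j^p \leq C_p\, n\, (C_\vartheta \beta^j)^p / x_j^p$. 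Crucially, both bounds retain the supremum over $k \leq n$ because of the maximal inequalities used.

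Finally, I would allocate the total deviation geometrically across blocks, e.g.\ $x_j = x(1-\beta^{1/2})\beta^{j/2}$ so that $\sum_{j \geq 0} x_j = x$, and choose $\tau_j \asymp \beta^j$. Summing the bounds over $j \geq 0$ leverages the geometric decay to produce a single polynomial term of order $C_p n/x^p$ and a single exponential term of order $C_p \exp(-c_p x^2/n)$ valid for $x \geq C_p'\sqrt{n}$, after absorbing the geometric series into the constants. The uniformity of the resulting constants in $h$ is inherited directly from the uniform bound $\sup_h \vartheta_{j,h,p} \leq C_\vartheta \beta^j$.

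The main obstacle I anticipate is the careful balancing of the truncation levels $\tau_j$ and the weights $x_j$ across the infinite decomposition, while keeping all constants depending only on $p$ and the sequence $\{\sup_h \vartheta_{t,h,p}\}_t$, as required by the statement. A cleaner route, which I would likely adopt in practice, is simply to invoke Theorem~2 of \cite{Liu2013} in the form presented in the supplement of \cite{Jirak2015supp}: this is essentially the statement of the lemma once one verifies that $\Vert X_{0,h}\Vert_p < \infty$ (which follows from Assumption \ref{assump:RV}\,\eqref{assump:S1}) and that the physical dependence decays geometrically uniformly in $h$ (Assumption \ref{assump:temporalDependence}\,\eqref{assump:TD1}). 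The uniformity of the constants then follows from the uniform upper bound $C_\vartheta \beta^t$ and monotonicity of the resulting constants in this upper envelope.
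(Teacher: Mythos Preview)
The paper does not prove this lemma at all: it simply states it as a citation, writing that it is ``the following Nagaev-type inequality as given in the online supplement of \cite{Jirak2015} which is a version of Theorem~2 in \cite{Liu2013}.'' This is precisely the ``cleaner route'' you mention in your final paragraph, so in that sense your proposal and the paper coincide.

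Your more detailed sketch via the martingale projection decomposition $X_{t,h}-\E[X_{t,h}]=\sum_{j\ge 0} P_{t-j}X_{t,h}$, truncation, Freedman/Bernstein on the bounded part and Rosenthal--Burkholder on the remainder, followed by geometric allocation of the deviation level, is essentially the architecture of the proof in \cite{Liu2013} itself. It is correct in spirit and would reproduce the result, but it is strictly more than what the paper does here; the paper treats the lemma as a black-box import, and the only thing to verify is that Assumption~\ref{assump:temporalDependence}\,\eqref{assump:TD1} supplies the uniform-in-$h$ geometric decay of $\vartheta_{t,h,p}$ needed for the constants to be independent of $h$ --- which you correctly identify.
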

\noindent As an immediate consequence of the bound
$$
\max_{k=1}^n\Big|  \sum_{t=k}^n X_{t,h} - \E[X_{t,h}] \Big|
\leq 2 \max_{k=1}^n\Big|  \sum_{t=1}^k X_{t,h} - \E[X_{t,h}] \Big|
$$
Lemma \ref{lem:nagaev} holds with adjusted constants also for the reversed partial sum maximum.\\

\noindent The following inequality is Lemma 2.1 in \cite{Chernozhukov2013}.
\begin{lemma}\label{lem:ineqcherno1}
Let $\bm{Z} = (Z_1,\dots,Z_d)^\top$ be a centered Gaussian vector with covariance matrix $\Sigma^Z$ whose diagonal entries are bounded by two constants $\sigma_\ell$ and $\sigma_{u}$, that is
$$
0<\sigma_\ell 
\leq  \Sigma^Z_{j,j} 
\leq \sigma_{u}
$$
for $j=1,\dots,d$~.
Then for $\delta >0$ it holds that 
\begin{align*}
\sup_{z \in \R} \Pb\Big( \Big| \maxhd Z_h - z \Big| \leq \delta \Big)
&\leq C_{\sigma}\delta \Big( \sqrt{2\log(d)} + \sqrt{\max\{1, \log(\sigma_\ell/\delta)\}} \Big)~,
\end{align*}
where the constant $C_{\sigma}>0$ depends on $\sigma_\ell$ and $\sigma_u$ only.
\end{lemma}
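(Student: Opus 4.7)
The statement is an anti-concentration inequality for the supremum of a Gaussian vector, which is Lemma~2.1 of \cite{Chernozhukov2013}; my plan is to reproduce their strategy. Writing $M=\maxhd Z_h$ and $F_M$ for its distribution function, the probability under consideration equals $F_M(z+\delta)-F_M(z-\delta)$, so the task reduces to bounding the modulus of continuity of $F_M$ uniformly in $z\in\R$ by the right-hand side.

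The first step is to smooth the non-differentiable $M$ by the log-sum-exp surrogate
\begin{align*}
M_\beta(\mathbf{Z})=\beta^{-1}\log\sum_{h=1}^d e^{\beta Z_h}\,,
\end{align*}
which satisfies $0\le M_\beta(\mathbf{Z})-M\le \beta^{-1}\log d$ pointwise. Choosing $\beta\asymp \delta^{-1}\sqrt{\log d}$ makes the smoothing error of order $\delta$, so it suffices (up to a constant absorbed into $C_\sigma$) to bound $\Pb(M_\beta(\mathbf{Z})\in[z-\delta,z+\delta])$. Multiplying by a smooth bump $\psi_\delta$ approximating $\mathbf{1}_{[z-\delta,z+\delta]}$ and applying Gaussian integration by parts (Stein's identity) to $\E[\psi_\delta(M_\beta(\mathbf{Z}))]$, one obtains a representation in which the gradient $\nabla M_\beta$ appears; because these softmax weights lie in $[0,1]$ and sum to one, the resulting expression is controlled by $\Sigma^Z$ acting on a probability vector. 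Combined with the uniform lower bound $\sigma_\ell$ on the diagonal of $\Sigma^Z$, this yields the $\delta\sqrt{2\log d}$ contribution.

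The main obstacle is the $\sqrt{\max\{1,\log(\sigma_\ell/\delta)\}}$ term, which is needed only in the very small-$\delta$ regime where the Stein estimate above degenerates (because then the smoothing parameter $\beta$ must be taken large and the linearization of $\psi_\delta$ is too crude). I would treat this regime by a separate direct argument: decompose $\{M\in[z-\delta,z+\delta]\}=\bigcup_{h=1}^d \{Z_h\in[z-\delta,z+\delta],\,Z_h=M\}$ and on each event bound the one-dimensional Gaussian density of $Z_h$ at $z$ by $(2\pi\sigma_\ell)^{-1/2}$. The union bound loses a factor $d$, but restricting to the sub-event $\{Z_h\ge z-\delta\}\cap\{\max_{j\neq h}Z_j\le z+\delta\}$ and using the standard Gaussian tail $\Pb(Z_h\ge z-\delta)\le e^{-(z-\delta)^2/(2\sigma_u)}$ trades the factor $d$ for the extra logarithmic factor via the elementary observation that in this regime $z\asymp \sqrt{\sigma_u\log d}$.

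Finally, the two bounds are merged by taking the smaller of them, which gives the sum $\sqrt{2\log d}+\sqrt{\max\{1,\log(\sigma_\ell/\delta)\}}$ in the statement, with $C_\sigma$ depending only on $\sigma_\ell$ and $\sigma_u$. The anticipated difficulty lies entirely in the matching of the two regimes: the Stein-based derivation is clean but loses control at small $\delta$, while the direct density bound loses control at moderate $\delta$, and the proof must interpolate between them using a suitable threshold depending on $\sigma_\ell,\sigma_u$ and $d$.
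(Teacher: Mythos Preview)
The paper does not prove this lemma; it merely states it and attributes it to Lemma~2.1 of \cite{Chernozhukov2013}, so there is no argument to compare against. Your identification of the source is correct, and for the purposes of this paper a citation is all that is needed.

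As a side remark on your sketch: the first half (log-sum-exp smoothing plus Stein's identity) is indeed the machinery \cite{Chernozhukov2013} use, but the way you propose to handle the small-$\delta$ regime is not their argument and, as written, has a gap. Your union-bound step claims that restricting to $\{Z_h\ge z-\delta\}$ and invoking the Gaussian tail ``trades the factor $d$ for the logarithmic factor via $z\asymp\sqrt{\sigma_u\log d}$'', but the bound must hold uniformly over all $z\in\R$, including $z$ far from $\sqrt{\sigma_u\log d}$, so this heuristic does not close. The original proof avoids any regime split: it shows directly that the distribution of $\maxhd Z_h$ has a density and bounds that density pointwise by $C_\sigma\big(\sqrt{2\log d}+\sqrt{\max\{1,\log(\sigma_\ell/\delta)\}}\big)$ via a conditioning argument on which coordinate attains the maximum. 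If you want to reproduce the proof rather than cite it, follow that route instead of interpolating two separate bounds.
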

\noindent The next tool is Lemma 3.1 from \cite{Chernozhukov2013}.
\begin{lemma}\label{lem:ineqcherno2}
Let $\bm{U}=(U_1,\dots,U_d)^\top$ and $\bm{V}=(V_1,\dots,V_d)^\top$ denote two centered, $d$-dimensional Gaussian vectors with covariance matrices $\Sigma^U$ and $\Sigma^V$, respectively.
Further assume that there are two constants $c_1,C_1>0$, such that for all $j=1,\dots,d$
$$
c_1 \leq \big|\Sigma^U_{j,j} \big| \leq C_1~.
$$
Denote the maximum entry-wise distance of both covariance matrices by 
$$
\Delta := \max_{i,j=1}^d \big|\Sigma^U_{i,j} - \Sigma^V_{i,j}  \big|~.
$$
Then it holds that 
$$
\sup_{x \in \R} \Big| \Pb\Big( \max_{i=1}^{d} U_i  \leq x \Big) 
- \Pb\Big( \max_{i=1}^{d} V_i  \leq x \Big)  \Big|
\leq  C\Delta^{1/3}\cdot\max\Big\{1,\, \log\big(d/\Delta \big) \Big\}^{2/3}~,
$$
where the constant $C>0$ depends on $c_1$ and $C_1$ only.
\end{lemma}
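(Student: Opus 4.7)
The plan is to follow the Slepian-interpolation strategy behind the Chernozhukov--Chetverikov--Kato Gaussian comparison: approximate both the maximum functional and the indicator by smooth surrogates, interpolate linearly between the covariance structures of $\bm U$ and $\bm V$, and finally pay for the smoothing with an anti-concentration inequality. I would begin by replacing $\max_{1\le i\le d} x_i$ with the log-sum-exp $F_\beta(x)=\beta^{-1}\log\sum_{i=1}^d \exp(\beta x_i)$ for a parameter $\beta>0$, which satisfies $0\le F_\beta(x)-\max_i x_i\le \beta^{-1}\log d$ and whose mixed partial derivatives obey $\sum_{j,k}|\partial_j\partial_k F_\beta(x)|\lesssim \beta$ together with $\sum_{j,k,\ell}|\partial_j\partial_k\partial_\ell F_\beta(x)|\lesssim \beta^2$. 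Next, for $\delta>0$, I would pick a smooth non-increasing $\phi:\R\to[0,1]$ with $\mathbf 1\{s\le 0\}\le \phi(s)\le \mathbf 1\{s\le \delta\}$ and $\|\phi^{(k)}\|_\infty\lesssim \delta^{-k}$ for $k=1,2,3$, and set $\Psi_{x}(\cdot)=\phi(F_\beta(\cdot)-x)$.

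Then I would implement the Slepian--Stein interpolation. Let $\bm V'$ be an independent copy of $\bm V$ (independent of $\bm U$) and define $\bm W(t)=\sqrt t\,\bm U+\sqrt{1-t}\,\bm V'$ for $t\in[0,1]$. Differentiating $t\mapsto \E[\Psi_x(\bm W(t))]$ and applying Gaussian integration by parts to each coordinate gives
\begin{align*}
\frac{d}{dt}\E[\Psi_x(\bm W(t))]
=\tfrac{1}{2}\sum_{j,k=1}^d (\Sigma^U_{j,k}-\Sigma^V_{j,k})\,\E[\partial_j\partial_k \Psi_x(\bm W(t))].
\end{align*}
Bounding the right-hand side with the chain rule applied to $\phi\circ F_\beta$, I obtain $\sum_{j,k}|\partial_j\partial_k\Psi_x|\lesssim \|\phi'\|_\infty\,\beta + \|\phi''\|_\infty\lesssim \beta\delta^{-1}+\delta^{-2}$, so integrating from $0$ to $1$ yields
\begin{align*}
\big|\E[\phi(F_\beta(\bm U)-x)]-\E[\phi(F_\beta(\bm V)-x)]\big|\lesssim \Delta\,(\beta\delta^{-1}+\delta^{-2}).
\end{align*}

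Step three is to convert this smooth bound into a Kolmogorov bound. Using $\max_i U_i\le F_\beta(\bm U)\le \max_i U_i+\beta^{-1}\log d$ and sandwiching the indicator by $\phi$, I would get
\begin{align*}
\Pb\big(\tfrac{\max_i U_i\le x\big)-\Pb(\max_i V_i\le x+\delta+\beta^{-1}\log d\big)
\lesssim \Delta\,(\beta\delta^{-1}+\delta^{-2}),
\end{align*}
and a symmetric inequality in the other direction. The anti-concentration inequality from Lemma~\ref{lem:ineqcherno1}, applied to $\max_i V_i$ (whose variance is bounded away from $0$ and $\infty$ by the assumption on $\Sigma^U$ together with the $\Delta$-closeness to $\Sigma^V$, at least once $\Delta$ is small), shows that
$\sup_x \Pb(|\max_i V_i-x|\le \delta+\beta^{-1}\log d)\lesssim (\delta+\beta^{-1}\log d)\sqrt{\log d}$. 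Combining these two estimates gives, uniformly in $x$,
\begin{align*}
\big|\Pb(\max_i U_i\le x)-\Pb(\max_i V_i\le x)\big|\lesssim \Delta(\beta\delta^{-1}+\delta^{-2})+(\delta+\beta^{-1}\log d)\sqrt{\log d}.
\end{align*}

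The final step is to optimize. Choosing $\beta=\delta^{-1}\log d$ collapses the Slepian bound to $\Delta\delta^{-2}\log d$ and equates the two smoothing errors. Then setting $\delta\asymp \Delta^{1/3}(\log(d/\Delta))^{1/3}$ balances $\Delta\delta^{-2}\log d$ against $\delta\sqrt{\log d}$ and yields the stated rate $\Delta^{1/3}\max\{1,\log(d/\Delta)\}^{2/3}$. I expect the main obstacle to be the bookkeeping of constants in the Slepian identity, in particular carefully verifying the third-derivative bounds on $\phi\circ F_\beta$ needed to ensure that the interpolation error is of order $\Delta(\beta\delta^{-1}+\delta^{-2})$ rather than the cruder $\Delta\beta^2$ one would get from a naive estimate, since it is exactly this refinement that produces the cube-root rate instead of a square-root rate.
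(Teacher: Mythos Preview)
The paper does not prove this lemma at all; it is stated as a technical auxiliary result and attributed verbatim to Lemma~3.1 of \cite{Chernozhukov2013}. Your sketch reproduces precisely the Chernozhukov--Chetverikov--Kato argument: log-sum-exp smoothing of the maximum, a smooth indicator, the Slepian--Stein interpolation combined with Gaussian integration by parts, and the anti-concentration bound of Lemma~\ref{lem:ineqcherno1} to absorb the smoothing error. So the approach is the same as the one the paper implicitly relies on.

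Two small points worth tightening. First, you apply anti-concentration to $\max_i V_i$, but the variance hypothesis in the lemma is only on $\Sigma^U$; the clean fix is either to apply Lemma~\ref{lem:ineqcherno1} to $\max_i U_i$ instead, or to observe that when $\Delta$ is not small the right-hand side already exceeds a constant (by choice of $C$) and the bound is vacuous. Second, your final optimization gives $(\log d)^{2/3}$ rather than $(\log(d/\Delta))^{2/3}$; the extra $\log(1/\Delta)$ enters because the anti-concentration bound in Lemma~\ref{lem:ineqcherno1} carries a $\sqrt{\log(\sigma_\ell/\delta)}$ term, which for the optimal $\delta\asymp \Delta^{1/3}$ contributes a factor $\sqrt{\log(1/\Delta)}$. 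Accounting for this yields exactly the stated $\max\{1,\log(d/\Delta)\}^{2/3}$.
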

\end{document}